\renewcommand{\geq}{\geqslant}
\renewcommand{\leq}{\leqslant}
\newcommand{\z}{\zeta}
\newcommand{\RM}{\mathbb{R}}
\newcommand{\ZM}{\mathbb{Z}}
\newcommand{\NM}{\mathbb{N}}
\newcommand{\CM}{\mathbb{C}}
\newcommand{\zygmund}{\mathcal{C}}
\newcommand{\SM}{\mathbb{S}}
\newcommand{\Lop}{\mathcal{L}}
\newcommand{\diff}{\,\mathrm{d}}
\DeclareMathOperator{\ran}{ran}
\DeclareMathOperator{\kernel}{ker}
\DeclareMathOperator{\spn}{span}
\newtheorem{theorem}{Theorem}[section]
\newtheorem{lemma}[theorem]{Lemma}
\newtheorem{proposition}[theorem]{Proposition}
\newtheorem{remark}[theorem]{Remark}
\newtheorem{corollary}[theorem]{Corollary}
\newtheorem*{main-theorem}{Main Theorem}
\newtheorem*{remark*}{Remark}
\numberwithin{equation}{section}
\title[Bifurcations in the capillary-gravity Whitham equation]{On the bifurcation diagram of the capillary-gravity Whitham equation}
\author[Ehrnstr\"om]{Mats Ehrnstr\"om}
\address{Department of Mathematical Sciences, Norwegian University of Science and Technology, 7491 Trondheim, Norway}
\thanks{All authors acknowledge the support by grant nos. 231668 and 250070 from the Research Council of Norway. M.J. was supported by the National Science Foundation under grant DMS-1614785.}
\email{mats.ehrnstrom@ntnu.no}
\author[Johnson]{Mathew~A.~Johnson}
\address{Department of Mathematics, University of Kansas, Lawrence, KS 66045 USA} 
\email{matjohn@ku.edu}
\author[M{\ae}hlen]{Ola I.H. Maehlen}
\address{Department of Mathematical Sciences, Norwegian University of Science and Technology, 7491 Trondheim, Norway}
\email{ola.mahlen@ntnu.no}
\author[Remonato]{Filippo Remonato}
\address{Department of Mathematical Sciences, Norwegian University of Science and Technology, 7491 Trondheim, Norway \emph{and} Department of Mathematics, University of Pavia, 27100 Pavia, Italy}
\email{filippo.remonato@ntnu.no}
\date{\today}
\keywords{Whitham-type equations; water waves; multi-dimensional bifurcation; nonlinear waves}
\subjclass[2010]{35Q35 (primary), 37K50, 76N10}
\definecolor{darkred}{rgb}{0.55,0,0}
\begin{document}

\maketitle

\begin{abstract}
We study the bifurcation of periodic travelling waves of the capillary-gravity Whitham equation. This is a nonlinear pseudo-differential equation that combines the canonical shallow water nonlinearity with the exact (unidirectional) dispersion for finite-depth capillary-gravity waves. Starting from the line of zero solutions, we give a complete description of all small periodic solutions, unimodal as well bimodal, using simple and double bifurcation via Lyapunov--Schmidt reductions. Included in this study is the resonant case when one wavenumber divides another. Some bifurcation formulas are studied, enabling us, in almost all cases, to continue the unimodal bifurcation curves into global curves. By characterizing the range of the surface tension parameter for which the integral kernel corresponding to the linear dispersion operator is completely monotone (and therefore positive and convex; the threshold value for this to happen turns out to be  \(T = \frac{4}{\pi^2}\), not the critical Bond number \(\frac{1}{3}\)), we are able to say something about the nodal properties of solutions, even in the presence of surface tension. Finally, we present a few general results for the equation and discuss, in detail, the complete bifurcation diagram as far as it is known from analytical and numerical evidence. Interestingly, we find, analytically, secondary bifurcation curves connecting different branches of solutions; and, numerically, that all supercritical waves preserve their basic nodal structure, converging asymptotically  in \(L^2(\SM)\) (but not in \(L^\infty\)) towards one of the two constant solution curves.
\end{abstract}

\section{Introduction}
We consider periodic travelling wave solutions of the capillary-gravity Whitham equation
\begin{equation}\label{e:w1}
u_t+M_Tu_x+2uu_x=0
\end{equation}
where $M_T$ is a Fourier multiplier operator defined via its symbol \(m_T\) as
\begin{equation}\label{e:M}
\widehat{M_Tf}(\xi)= m_T(\xi)\widehat{f}(\xi) = \left(\frac{(1+T\xi^2)\tanh(\xi)}{\xi}\right)^{\frac{1}{2}}~\widehat{f}(\xi),
\end{equation}
and the coefficient $T>0$ denotes the strength of the surface tension. The symbol \(m_T\) arises as the linear dispersion relation for capillary-gravity water waves over a finite depth described by the Euler equations \cite{MR3060183}. In the purely gravitational case, that is, when $T=0$, the use of this symbol was proposed by Whitham as a way to generalise the KdV equation and remedy its strong dispersion \cite{Whitham1967vma}.
Bifurcation in the gravitational setting has been investigated in \cite{Ehrnstrom2013gbf, Ehrnstrom2009twf,Ehrnstrom2016owc}.
We are here interested in completely characterising the local theory for travelling wave solutions of \eqref{e:w1}, and understanding their global extensions.

The overarching technique follows an approach similar to that used for the gravity Whitham equation in \cite{Ehrnstrom2013gbf} and the Euler equations in \cite{eew10}, where a Lyapunov--Schmidt reduction is used  to prove the existence of wave solutions through the application of the implicit function theorem.
Here, however, the symbol of the linear dispersion has a different large-frequency behaviour: whereas it is $\sim|\xi|^{-1/2}$ in the gravity case, it changes to $\sim\lvert\xi\rvert^{1/2}$ in the presence of surface tension. Inspired by recent work on large waves for very weakly dispersive equations, we tackle the equation by inverting the linear operator, see \eqref{eq:whitham}, presenting us with a smoothing operator with good properties but that now acts nonlocally on a nonlinear term. Apart from the results presented in this paper, we see this as a first step toward handling large-amplitude theory for equations with mixed nonlocal and nonlinear terms. A study in that direction, but with a different order and global structure of the solutions, has been carried out in \cite{arXiv:1808.08057}.

The organisation of the paper correspond to the development of our theory:

We start, in Section~\ref{sec:kernel}, with a study of the \emph{inverse} of the Fourier multiplier operator \(M\) in \eqref{e:M}. This is a smoothing operator of order \(-\frac{1}{2}\) on any Fourier-based scale of functions spaces (such as the Sobolev and Zygmund spaces), that is realised as a convolution operator with a surface tension-dependent integral kernel \(K_T\).  We characterise the kernel \(K_T\) in Theorem~\ref{thm:regularity and decay}, expressing it as a sum of three terms that are, optimally, in the regularity classes \(\zygmund^{-\frac{1}{2}}\), \(\zygmund^{-\frac{3}{2}}\) and \(C^{\omega}\), respectively, where \(\zygmund^s\) is the scale of Zygmund spaces, and \(C^\omega\) is the class of real-analytic functions. This is different from the regular Whitham symbol which, although of the same order, has only two canonical when decomposed in the same manner \cite{Ehrnstrom2016owc}. As in \cite{Ehrnstrom2016owc} we apply complex analysis techniques and the theory of Stieltjes functions to determine further properties of the convolution kernel, in particular the signs of its derivatives to infinite order. When the surface tension is big enough, \(T \geq \frac{4}{\pi^2}\), we are able in Theorem~\ref{thm:complete monotonicity} to show that the kernel is \emph{completely monotone}, a delicate structural property shared by the kernel for the linear dispersion in the pure gravity case (not its inverse). Moreover, we can show that neither complete monotonicity or monotonicity on a half-line is preserved if \(0 < T <  \frac{4}{\pi^2}\), showing in effect that the critical Bond number \(\frac{1}{3}\) separating weak from strong surface tension is \emph{not} the break-off value for the positivity of the kernel (or its stronger properties).
 Finally, we give in Section~\ref{sec:kernel} the decay rate of \(K_T\) and its compactness properties in suitable spaces. 

In Section~\ref{sec:onedim} we perform the one-dimensional bifurcation of periodic waves from simple eigenvalues along the line of zero solutions.  After an initial discussion of the eigenvalues of the linearised operator, and a scaling to reduce the problem to a fixed period, we use Lyapunov--Schmidt reduction to prove the existence of small-amplitude solutions in a vicinity of the simple eigenvalues (expressed using the wavespeed) in Theorem~\ref{thm:1Dlocal}. The constructed waves are all unimodal and bell-shaped in a minimal period. They arise for both strong and weak surface tension; for strong surface tension they are the only type of waves in a \(\zygmund^s(\SM)\)-vicinity of the line of zero solutions, \(s > 0\). Although one could have carried out the simple bifurcation using the Crandall--Rabinwitz theorem \cite{MR2004250}, we choose to prove Theorem~\ref{thm:1Dlocal} using a Lyapunov--Schmidt reduction as a preparation for the two-dimensional case (which would otherwise be harder to understand). Under a simple condition that relates the wavenumber to the surface tension and period, we prove the continuation of the local solution curves to global ones in Theorem~\ref{thm:1Dglobal}. This condition may be related to sub- and supercritical bifurcation, and we see in Remark~\ref{rem:cdotdot} that both cases can appear.  The modulational stability of these waves in the small-amplitude case has been studied in \cite{Hur2015mii}

A challenge and interesting feature of the capillary-gravity case is that weak surface tension allows for a non-monotone dispersion relation (see Figure~ \ref{F:symbol}) and double eigenvalues of the corresponding linearised operator (in spaces of even functions). We handle this case in Section~\ref{sec:twodim}. To analytically capture the larger dimension of the space of solutions nearby the trivial ones, one requires an additional free parameter in addition to the wavespeed, used in the one-dimensional bifurcation. In line with \cite{Ehrnstrom2015tsw} we choose to use the period as this extra parameter, while holding the surface tension fixed. The result, presented in Theorem~\ref{thm:2DLocal}, depends on the resonances between the two frequencies appearing in the nullspace: if one of the wavenumbers is a multiple of the other, one obtains a slit disk of solutions, excluding bifurcation straight in the direction of the higher wavenumber; if not, one obtains a full open disk of solutions, see Figure~\ref{fig:solutionDisk}. These results are in line with similar ones in \cite{eew10,JonesToland85,Reeder1981owr}, and include --- when projecting the full disk onto a fixed period --- a curve of bimodal rippled waves connecting waves of different wavenumbers (secondary bifurcation). The existence of these interconnecting branches of waves have been corroborated numerically, showing persistence with respect to perturbations in the surface tension parameter \cite{Remonato2017nbf}. The nonexistence of the pure higher mode in the resonant case of Theorem~\ref{thm:2DLocal}~(ii) has also been confirmed numerically in the same paper. More generally, Wilton ripples, as these kinds of waves are sometimes called, have earlier been found to exist for the Euler equations with surface tension  \cite{Reeder1981owr,JonesToland85}, and their spectral stability has been numerically investigated in \cite{Trichtchenko2016tio}. They also exist in the presence of vorticity \cite{Martin2013eow}, even without capillarity \cite{eew10,Ehrnstrom2015tsw}. In that case, one may even construct arbitrary large kernels \cite{07022009,Aasen2018tgw}, and corresponding multi-dimensional solution sets \cite{Kozlov2017nms}.

Our motivation for this investigation has arisen from two different directions: one is the study of the (very) weakly dispersive equations with nonlocal nonlinearities, and especially their large-amplitude theories; the other is the mathematically qualitative analogues between the full water-wave problem and the family of fully dispersive Whitham-type equations. While numerical bifurcation of steady water waves with surface tension have been earlier carried out \cite{Aston1991lag}, and display striking resemblances to our case, it is not known how to control the waves along the bifurcation curves when surface tension is present, and our results show that, at least for weak surface tension, the looping alternative in Theorem~\ref{thm:1Dglobal} is possible. Our initial hope was that, using methods as in \cite{Ehrnstrom2016owc,EJC18}, one would be able to say something for larger waves. In Section~\ref{sec:discussion} we turn to this question, as well as discussing the general picture of bifurcation in the capillary-gravity Whitham equation.
While we are indeed able to say something, preserving the nodal properties to \(\mathcal{O}(1)\)-height of the solutions in Proposition~\ref{prop:leaving_c/2}, the final evolution of solution curves is very challenging to handle analytically. While both our preliminary calculations and numerical simulations for this paper indicates that one can follow curves of supercritical bell-shaped solutions all the way to \(c \to \infty\), and that they converge, asymptotically in \(L^2(\SM)\), towards the curve of constant solutions \(u = c -1\), they do not converge in \(L^\infty\), and the analysis is complicated by that the equation lies exactly at the Sobolev-critical balance \(s = \frac{1}{2}\), \(p = 2\) and \(n = 1\). We discuss  both our findings and conjectures in detail in Section~\ref{sec:discussion}. For a quick overview, we refer to Figures~\ref{fig:strong} and~\ref{fig:weak}.

Finally, we give in Appendix~\ref{app:A} some bifurcation formulas.

\section{Properties of the convolution kernel $K_T$}
\label{sec:kernel}

Traveling-wave solutions of the form $u(x-ct)$ satisfy the (profile) equation
\begin{equation}\label{e:profile1}
-cu+M_Tu+u^2=0,
\end{equation}
where we have integrated once and used Galilean invariance to set the constant of integration to zero.
Since $m_T$ is strictly positive on $\RM$, the operator $M_T$ is invertible (for example in any Fourier-based space) with inverse  $L_T$
defined via
\begin{equation}
	\label{eq:L}
	\widehat{L_Tf}(\xi)=l_T(\xi)\widehat{f}(\xi), \qquad l_T(\xi)=(m_T(\xi))^{-1}.
\end{equation}
In particular, the capillary-gravity Whitham equation \eqref{e:profile1} can be rewritten in the ``smoothing" form
\begin{equation}
	\label{eq:whitham}
	u-cL_T(u)+L_T(u^2)=0,
\end{equation}
where $L_T = K_T\ast$ and $K_T$ is the convolution kernel corresponding to the symbol \(l_T\). Note that the form \eqref{eq:whitham} is resemblant of the Whitham equation itself, but with a nonlocal nonlinearity.  \emph{By a solution of \eqref{e:profile1} (respectively \eqref{eq:whitham}), we shall mean a real-valued, continuous and bounded function $u$ that satisfies \eqref{e:profile1} (respectively \eqref{eq:whitham}) everywhere}.

In the rest of this work we shall make heavy use of the properties of the convolution kernel $K_T$ and its symbol. Our choice of Fourier transform is 
\[
\hat{f}(\xi)=\int_{\RM}f(x)e^{-ix\xi}\,\diff x.
\]

To start, note that $K_T=\mathcal{F}^{-1}l_T$  is smooth away from the origin with
\begin{equation}
	\label{eq:symbol_limit_at_zero}
	\int_\RM K_T(x) \diff x = \lim_{\xi\to 0}l_T(\xi)=1
\end{equation}
and
\begin{equation*}
	\lim_{x\to 0}K_T(x) = \frac{1}{2\pi}\int_{\RM} l_T(\xi) \diff \xi = +\infty.
\end{equation*}
Moreover, since $l_T$ is analytic, $K_T$ has rapid decay at $\pm\infty$, whence $K_T\in L^1(\RM)$ provided that the blow-up at $x=0$ is not too fast.
In what follows, we will show that the singularity at the origin is of order $\lvert x \rvert^{-\frac{1}{2}}$ (there is a lower-order singularity appearing as well), and that the convolution kernel is completely monotone for strong enough surface tension.

\subsection{Montonicity and complete monotonicity}
A function $g : (0, \infty) \rightarrow [0,\infty)$ is called \emph{completely monotone} if $g$ is infinitely differentiable with
	\begin{equation*}
		(-1)^n g^{(n)}(\lambda) \geqslant 0
	\end{equation*}
	for $n = 0,1,2,\ldots$ and all $\lambda > 0$. If it can furthermore be written in the form
	\begin{equation*}
		g(\lambda) = \frac{a}{\lambda} + b + \int_{(0, \infty)} \frac{1}{\lambda + t} \diff \sigma(t)
	\end{equation*}
	for some constants $a,b>0$, with $\sigma$ a Borel measure satisfying $\int_{(0, \infty)} \frac{1}{1+t} \diff \sigma(t) < \infty$, then it is called \emph{Stieltjes}.   Our interest in such functions is motivated by the following two results, taken from \cite{Ehrnstrom2016owc} and \cite{Schilling2012bf}.

\begin{lemma}{\rm \cite{Ehrnstrom2016owc}}
	\label{thm:complMonCrit}
	Let $f : \RM \rightarrow \RM$ and $g : (0, \infty) \rightarrow \RM$ be two functions satisfying $f(\xi) = g(\xi^2)$ for $\xi \neq 0$. Then $f$ is the Fourier transform of an even, integrable, and completely monotone function if and only if $g$ is Stieltjes with  $\lim_{\lambda \searrow 0} g(\lambda) < \infty$ and $\lim_{\lambda \rightarrow \infty} g(\lambda) = 0$.
\end{lemma}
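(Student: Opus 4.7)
The plan is to prove both implications using Bernstein's theorem (completely monotone functions are Laplace transforms of positive measures) together with the integral representation of Stieltjes functions, connecting the two via the explicit Fourier transform of the Poisson-type kernel $e^{-\sqrt{t}\lvert x\rvert}$.

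For the forward implication, I would start by taking an even, integrable, completely monotone $K$ with $\hat{K} = f$. Restricted to $(0,\infty)$, Bernstein's theorem gives a positive Borel measure $\mu$ on $[0,\infty)$ such that
\[
K(x) = \int_{[0,\infty)} e^{-xt}\,\diff\mu(t), \qquad x>0.
\]
Integrability of $K$ on $(0,\infty)$ combined with Tonelli forces $\mu(\{0\}) = 0$ and $\int_{(0,\infty)} t^{-1}\,\diff\mu(t)<\infty$. Then, by evenness of $K$ and Fubini,
\[
f(\xi) = 2\int_0^\infty K(x)\cos(\xi x)\,\diff x = 2\int_{(0,\infty)} \frac{t}{t^2+\xi^2}\,\diff\mu(t).
\]
Pushing the measure $2\mu$ forward by $t\mapsto t^2$ gives a Borel measure $\sigma$ on $(0,\infty)$ with
\[
g(\lambda) = \int_{(0,\infty)} \frac{1}{\lambda + t}\,\diff\sigma(t),
\]
which is Stieltjes with $a = b = 0$. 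The integrability condition $\int (1+t)^{-1}\,\diff\sigma(t)<\infty$ follows from evaluating at $\lambda = 1$ (where $g(1) = f(1)$ is finite since $K\in L^1$), and the limits $g(\lambda)\to 0$ as $\lambda\to\infty$ and $\lim_{\lambda\searrow 0}g(\lambda) = \int t^{-1}\,\diff\sigma(t)<\infty$ follow from dominated convergence and the pushforward identity $\int t^{-1}\,\diff\sigma(t) \sim \int t^{-1}\,\diff\mu(t)$.

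For the reverse implication, I would argue that the two limiting conditions in the hypothesis force $a=b=0$ in the Stieltjes representation and force $\int_{(0,\infty)} t^{-1}\,\diff\sigma(t)<\infty$. Then I define, motivated by the known Fourier pair $\mathcal{F}^{-1}\bigl((\xi^2+t)^{-1}\bigr)(x) = (2\sqrt{t})^{-1}e^{-\sqrt{t}\lvert x\rvert}$, the candidate
\[
K(x) \;=\; \int_{(0,\infty)} \frac{e^{-\sqrt{t}\,\lvert x\rvert}}{2\sqrt{t}}\,\diff\sigma(t).
\]
This is manifestly even. Integrability follows by Tonelli: $\int_{\RM}K = \int_{(0,\infty)} t^{-1}\,\diff\sigma(t)<\infty$. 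Complete monotonicity on $(0,\infty)$ follows because each map $x\mapsto e^{-\sqrt{t}\,x}$ is completely monotone and the class of completely monotone functions is closed under positive linear combinations and pointwise limits of monotone nets; differentiating under the integral sign is legitimate by uniform domination on compact subsets of $(0,\infty)$. Finally, another application of Fubini and the Poisson kernel identity above gives $\hat{K}(\xi) = g(\xi^2) = f(\xi)$.

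The main technical obstacle is the careful bookkeeping: checking that the three Fubini/Tonelli exchanges are justified (the pushforward $t\mapsto t^2$ in one direction, and the integrations against $e^{-\sqrt{t}\lvert x\rvert}$ in the other), and that the limiting hypotheses on $g$ really do kill the degenerate contributions $a/\lambda$ and $b$ in the Stieltjes representation so that no mass at $0$ or at $\infty$ is lost in the correspondence. Everything else is a direct consequence of Bernstein's theorem and the explicit evaluation of the cosine/Poisson integrals.
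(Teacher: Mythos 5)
The paper itself does not prove this lemma; it is cited verbatim from \cite{Ehrnstrom2016owc}, so there is no in-paper argument to compare against. Your route — Bernstein's theorem on the completely monotone side, the Stieltjes integral representation on the other, glued together by the Fourier pair $\mathcal{F}\bigl((2\sqrt{t})^{-1}e^{-\sqrt{t}|\cdot|}\bigr)(\xi)=(t+\xi^2)^{-1}$ — is the natural one, and your overall structure is sound: the Tonelli argument correctly forces $\mu(\{0\})=0$ and $\int t^{-1}\,\diff\mu<\infty$ from $K\in L^1$, the Fubini steps are justified by exactly the integrability you extracted, the limit hypotheses on $g$ correctly kill the $a/\lambda$ and $b$ terms (and force $\int t^{-1}\,\diff\sigma<\infty$ via monotone convergence), and differentiation under the integral sign for complete monotonicity is covered by the domination you describe.

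There is one genuine slip, in the forward direction. Starting from $f(\xi)=2\int_{(0,\infty)}\frac{t}{t^2+\xi^2}\,\diff\mu(t)$ and wanting $g(\lambda)=\int_{(0,\infty)}\frac{1}{\lambda+s}\,\diff\sigma(s)$, the measure $\sigma$ must be the pushforward of $2t\,\diff\mu(t)$ under $t\mapsto t^2$, not of $2\mu$. With your definition one gets $\int\frac{1}{\lambda+s}\,\diff\sigma(s)=2\int\frac{1}{\lambda+t^2}\,\diff\mu(t)$, which is missing the factor $t$ in the numerator. Equivalently, the change of variables $s=t^2$ carries the Jacobian $\diff s=2t\,\diff t$, and that $2t$ must be absorbed into the new measure. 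Once corrected, your identity $\int s^{-1}\,\diff\sigma(s)=2\int t^{-1}\,\diff\mu(t)$ also comes out exactly (rather than ``$\sim$''), and the rest of the argument goes through unchanged.
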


\begin{lemma}{\rm \cite{Schilling2012bf}} \label{lemma:stieltjes}
	Let $g$ be a positive function on $(0, \infty)$. Then $g$ is Stieltjes if and only if $\lim_{\lambda \searrow 0} g(\lambda)$ exists in $[0, \infty]$ and $g$ extends analytically to $\CM \setminus (-\infty, 0]$ such that $\mathrm{Im}(z)\cdot \mathrm{Im}(g(z)) \leqslant 0$.
\end{lemma}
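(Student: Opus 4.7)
The two implications require different tools: necessity is a direct inspection of the Stieltjes integral representation, while sufficiency rests on the Herglotz--Nevanlinna representation theorem for Pick functions together with the Stieltjes inversion formula.

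\textbf{Necessity.} Assuming $g$ admits the Stieltjes representation, for each fixed $t \geq 0$ the map $z \mapsto (z + t)^{-1}$ is holomorphic on $\CM \setminus (-\infty, 0]$, and on any compact subset $K \subset \CM \setminus (-\infty, 0]$ one has the estimate $|z + t|^{-1} \leq C_K/(1+t)$. Combined with $\int_{(0,\infty)} (1+t)^{-1} \diff \sigma(t) < \infty$, this allows differentiation under the integral sign and yields holomorphicity of $g$ on $\CM \setminus (-\infty, 0]$. The pointwise identity $\mathrm{Im}(1/(z+t)) = -\mathrm{Im}(z)/|z+t|^2$ gives the sign condition for every $t$, preserved after integration. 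Existence of $\lim_{\lambda \searrow 0} g(\lambda) \in [0, \infty]$ follows from monotone convergence.

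\textbf{Sufficiency.} This is the substantial step. Positivity of $g$ on $(0, \infty)$ together with holomorphicity on $\CM \setminus (-\infty, 0]$ yields the Schwarz-reflection symmetry $g(\overline{z}) = \overline{g(z)}$. Define
\[
f(w) := g(-w), \qquad w \in \CM \setminus [0, \infty),
\]
so that $f$ is holomorphic on $\CM \setminus [0, \infty)$ and the imaginary-part hypothesis on $g$ gives $\mathrm{Im}\, f(w) \geq 0$ on the upper half-plane. By the Herglotz--Nevanlinna representation, there exist $\alpha \in \RM$, $\beta \geq 0$, and a positive Borel measure $\nu$ on $\RM$ with $\int (1+t^2)^{-1} \diff \nu(t) < \infty$ such that
\[
f(w) = \alpha + \beta w + \int_{\RM} \left( \frac{1}{t - w} - \frac{t}{1+t^2} \right) \diff \nu(t)
\]
on the upper half-plane. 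Since $f$ extends analytically across the interval $(-\infty, 0)$, Stieltjes inversion forces $\supp \nu \subset [0, \infty)$. Substituting $w = -\lambda$ for $\lambda > 0$ produces
\[
g(\lambda) = \alpha - \beta \lambda + \int_{[0,\infty)} \left( \frac{1}{t + \lambda} - \frac{t}{1+t^2} \right) \diff \nu(t),
\]
and positivity of $g$ as $\lambda \to \infty$ (the integral stays bounded there) rules out $\beta > 0$, so $\beta = 0$. Splitting $\nu = a \delta_0 + \sigma$ with $\sigma$ supported on $(0, \infty)$ and absorbing the finite constant $\alpha - \int t/(1+t^2) \diff \nu(t)$ into a single term $b$ yields the desired Stieltjes representation; $a, b \geq 0$ follow from positivity of $g$ at the two endpoints.

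\textbf{Main obstacle.} The principal difficulty is the sufficiency direction, and specifically the rigorous localization $\supp \nu \subset [0, \infty)$. This step applies Stieltjes inversion to a possibly singular Borel measure, arguing that the vanishing of $\mathrm{Im}\, f$ across the negative real axis excludes both absolutely continuous and singular components there. The hypothesis that $\lim_{\lambda \searrow 0} g(\lambda)$ exists in $[0, \infty]$ is what allows any divergence at the origin to be identified as a genuine atom $a \delta_0$ rather than as oscillation, giving the clean three-term decomposition.
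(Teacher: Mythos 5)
The paper does not prove this lemma; it is cited verbatim from Schilling--Song--Vondra\v{c}ek's book on Bernstein functions (\cite{Schilling2012bf}), so there is no paper proof to compare against. Your overall strategy --- read the sign condition as making $w \mapsto g(-w)$ a Pick (Nevanlinna--Herglotz) function, invoke the Herglotz representation, and use Stieltjes inversion to push the representing measure onto $[0,\infty)$ --- is exactly the standard route and the one taken in that reference. That said, there are two concrete gaps in your sufficiency argument and one misstatement about the role of the hypotheses.

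The Herglotz measure $\nu$ satisfies a priori only $\int (1+t^2)^{-1}\,\diff\nu(t)<\infty$, which does \emph{not} imply $\int t/(1+t^2)\,\diff\nu(t)<\infty$. Your claim that ``the integral stays bounded'' as $\lambda\to\infty$ (used to rule out $\beta>0$) and your claim that the constant $\alpha-\int t/(1+t^2)\,\diff\nu(t)$ is finite (used to obtain the term $b$) are both equivalent to the summability $\int (1+t)^{-1}\,\diff\nu(t)<\infty$, which is precisely part of what one must prove. Neither follows from the Herglotz representation alone. A clean fix is to differentiate the representation under the integral (justified by the local bound $(t+\lambda)^{-2}\lesssim (1+t^2)^{-1}$) to get $g'(\lambda)=-\beta-\int (t+\lambda)^{-2}\,\diff\nu(t)\leq -\beta$. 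Positivity of $g$ on $(0,\infty)$ then kills $\beta$; moreover $g$ is now nonincreasing and bounded below, so $g(\infty):=\lim_{\lambda\to\infty}g(\lambda)\in[0,\infty)$ exists, and Tonelli gives
\begin{equation*}
g(\lambda)-g(\infty)=\int_\lambda^\infty\!\!\int_{[0,\infty)}\frac{\diff\nu(t)}{(t+s)^2}\,\diff s=\int_{[0,\infty)}\frac{\diff\nu(t)}{t+\lambda}<\infty,
\end{equation*}
which simultaneously yields the summability $\int(1+t)^{-1}\diff\nu<\infty$, the identity $g(\lambda)=g(\infty)+\int(t+\lambda)^{-1}\diff\nu(t)$, and $b=g(\infty)\geq 0$; splitting off $\nu(\{0\})\delta_0$ then gives the three-term form directly.

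Finally, your closing remark about the role of the hypothesis $\lim_{\lambda\searrow0}g(\lambda)\in[0,\infty]$ is not right: as the calculation above shows, analyticity plus the sign condition plus positivity already force $g$ to be nonincreasing on $(0,\infty)$, so that limit exists automatically. The atom $a\delta_0$ is produced by the localization of $\nu$ to $[0,\infty)$, not by the assumed existence of the left endpoint limit; in this formulation that hypothesis is in fact redundant (it is carried in Schilling et al.\ because it appears in other legs of a longer chain of equivalences).
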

With $f(\xi) = l_T(\xi)$ and $g(\xi) = l_T(\sqrt{\xi})$ we want to employ the two above results to conclude that $K_T=\mathcal{F}^{-1}(l_T(\xi))$ is completely monotone
for $T$ sufficiently large. Since $l_T$ has a unit limit at the origin and a vanshing limit at infinity, it only remains to prove that $l_T \circ \sqrt{\cdot}$ is Stieltjes. To this end, define 
\begin{equation}\label{eq:rho}
	\varrho_T(\z) = \frac{\z}{(1+T\z^2) \tanh(\z)},
\end{equation}
with \(\zeta\) a complex number. We are interested in $l_T = \sqrt{\varrho_T}$, $\sqrt{\cdot}$ denoting the principal branch of the square root, and thus want to determine the pre-image of \((-\infty,0)\) together with the singularities of $\varrho_T$. Let furthermore
	\begin{align*}
		Z_c &= \left\{ \pi (k - \textstyle\frac{1}{2}) \colon k \in \ZM \right\}, \\
		Z_s &= \left\{ \pi k \colon k \in \ZM\setminus\{0\} \right\}, \\
		Z_T &= \left\{ {\textstyle{-\frac{1}{\sqrt{T}}, \frac{1}{\sqrt{T}}}} \right\},
	\end{align*}
denote the set of zeros of $\cos(\zeta)$, $\frac{\sin(\zeta)}{\zeta}$, and $1-T\zeta^2$, respectively. 
Finally, recall that the \emph{symmetric difference} between two sets \(A\) and \(B\) is the set  \(A \bigtriangleup B\) of elements either in \(A\) and not \(B\), or contrariwise\footnote{That is,
$(A \bigtriangleup B)=\left(A\cap B^c\right)\cup\left(B\cap A^c\right)$.}

\begin{lemma}\label{lemma:singularities}
Let $\zeta = \xi + i \eta$. Then $\varrho_T(\zeta)$ takes a zero or infinite value exactly if $\xi=0$ and $\eta \in Z_s \cup \left( Z_c \bigtriangleup Z_T \right)$.
Further, $\varrho_T(\zeta)$ is negative exactly when the following three conditions hold:  $\xi=0$,  $\eta \notin Z_s \cup \left( Z_c \bigtriangleup Z_T \right)$, and 
the intersection $(0, \lvert \eta \rvert ) \cap \big( \left( Z_c \cup Z_s\right) \bigtriangleup Z_T \big)$ contains an odd number of elements.
\end{lemma}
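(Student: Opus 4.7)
The plan is to leverage the factorization
\[
\varrho_T(\zeta) = \frac{\zeta\coth(\zeta)}{1+T\zeta^2},
\]
combined with the evenness $\varrho_T(-\zeta) = \varrho_T(\zeta)$ and the reflection $\overline{\varrho_T(\zeta)} = \varrho_T(\bar\zeta)$; by these it suffices to analyze $\zeta$ in the closed first quadrant. For the first assertion, note that $\zeta\coth\zeta$ vanishes precisely where $\cosh\zeta = 0$, i.e., at $\zeta \in iZ_c$, and has simple poles at the zeros of $\sinh$ away from the origin, i.e., at $\zeta \in iZ_s$; the denominator $1+T\zeta^2$ vanishes at $\zeta \in iZ_T$. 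All zeros and poles of $\varrho_T$ therefore lie on the imaginary axis, and it remains to track cancellations. At $\eta \in Z_T \cap Z_c$, a numerator zero cancels a denominator zero, leaving a finite nonzero value; at $\eta \in Z_T \cap Z_s$, a simple pole of $\zeta\coth\zeta$ combines with a simple zero of $1+T\zeta^2$ to yield still a (double) pole of $\varrho_T$. Assembling the cases produces the singular set $\xi = 0$, $\eta \in (Z_c \setminus Z_T) \cup Z_s \cup (Z_T \setminus Z_c) = Z_s \cup (Z_c \bigtriangleup Z_T)$.

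For the second assertion, on the imaginary axis $\varrho_T(i\eta) = \eta/[(1-T\eta^2)\tan\eta]$ is manifestly real; by evenness in $\eta$ it suffices to treat $\eta > 0$. Since $\varrho_T(i\eta) \to 1$ as $\eta \searrow 0$, the task reduces to locating the sign flips of $(1-T\eta^2)\tan\eta$, which occur at those $\eta$ where an odd number of the factors $\sin\eta$, $\cos\eta$, $1-T\eta^2$ undergoes a simple sign change. A direct case-check identifies this sign-change set as $(Z_c \cup Z_s) \bigtriangleup Z_T$ (at $\eta \in Z_T \cap (Z_c \cup Z_s)$ two simple sign changes coincide and cancel), and a parity count then delivers the stated criterion.

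The hardest step is ruling out $\varrho_T(\zeta) \in (-\infty, 0)$ when $\xi \neq 0$. If $\eta = 0$, the real-variable formula gives $\varrho_T(\zeta) > 0$ directly. Otherwise $\xi\eta \neq 0$, and introducing $w = \zeta^2$ one has $\mathrm{Im}(w) = 2\xi\eta \neq 0$; since both factors in the factorization are even in $\zeta$, they descend to meromorphic functions of $w$. The Mittag--Leffler expansion
\[
F(w) := \zeta\coth(\zeta) = 1 + \sum_{k=1}^\infty \frac{2w}{w + \pi^2 k^2}
\]
then shows that $F$ is a Pick function of $w$: each term has imaginary part $\pi^2 k^2 \mathrm{Im}(w)/|w + \pi^2 k^2|^2$, strictly of the sign of $\mathrm{Im}(w)$. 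For $w$ in the upper half-plane, both $F(w)$ and $1 + Tw$ thus lie strictly in the upper half-plane and are nonzero, so
\[
\arg \varrho_T(\zeta) \;=\; \arg F(w) - \arg(1 + Tw) \;\in\; (-\pi, \pi)
\]
as an open interval, excluding the negative real axis. The lower half-plane case follows by the reflection symmetry, completing the argument. The crux is precisely this Pick-function recognition: once $w = \zeta^2$ is introduced and $\zeta\coth\zeta$ is identified as a Pick function of $w$, the required argument estimate is immediate; a direct treatment of $\mathrm{Im}\,\varrho_T$ in the $(\xi,\eta)$-plane would be considerably more involved due to the interaction between the hyperbolic and trigonometric factors.
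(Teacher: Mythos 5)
Your proof is correct and follows the same overall strategy as the paper's: locate zeros, poles, and negativity on the imaginary axis by parity-counting sign changes, and rule out negativity off the axis by showing $\arg\varrho_T \in (-\pi,\pi)$. The technical execution of the off-axis step, however, is genuinely different. The paper factors $\varrho_T = \frac{\zeta}{1+T\zeta^2}\cdot\frac{1}{\tanh\zeta}$ and makes two direct, elementary real-part computations, $\mathrm{Re}\bigl[\cosh(\zeta)\overline{\sinh(\zeta)}\bigr]=\tfrac12\sinh(2\xi)>0$ and $\mathrm{Re}\bigl[\zeta\,\overline{(1+T\zeta^2)}\bigr]=\xi+T\xi(\xi^2+\eta^2)>0$ for $\xi>0$, to conclude each factor has argument in $(-\pi/2,\pi/2)$. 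You instead pass to $w=\zeta^2$, invoke the Mittag--Leffler expansion $\zeta\coth\zeta = 1 + \sum_{k\ge1}\frac{2w}{w+\pi^2 k^2}$, and recognize this as a Pick function of $w$, so that both the numerator and $1+Tw$ map the upper half-$w$-plane into itself, giving $\arg\varrho_T = \arg F(w) - \arg(1+Tw) \in (-\pi,\pi)$ and, by reflection, excluding the negative axis entirely. Your route is arguably more conceptual and is well-aligned with the Stieltjes/Pick framework the paper already uses in Lemma 2.2 (so it integrates naturally with the surrounding complete-monotonicity argument), while the paper's version is a short, self-contained inequality check requiring no Mittag--Leffler machinery. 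Also, for the first assertion, the paper reads the pole/zero set directly off the infinite-product formula, whereas you track cancellations in the factorization $\zeta\coth\zeta/(1+T\zeta^2)$; both are valid, and your remark that the $Z_T\cap Z_s$ coincidence yields a double pole (hence still a singularity) is the correct reason that case contributes to $Z_s\cup(Z_c\bigtriangleup Z_T)$ and not to a removable set. One trivial slip: the imaginary part of $\frac{2w}{w+\pi^2k^2}$ is $\frac{2\pi^2k^2\,\mathrm{Im}(w)}{|w+\pi^2k^2|^2}$, not $\frac{\pi^2k^2\,\mathrm{Im}(w)}{|w+\pi^2k^2|^2}$; the factor of 2 does not affect the sign and hence not the conclusion.
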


\begin{proof}
	By the infinite product formulas for $\sinh\z$ and $\cosh\z$ we obtain
	\begin{equation}\label{eq:infiniteProductFormulaForRhoT}
		\varrho_T(\z)  =
		\frac{1}{1+T\z^2}\prod_{n=1}^\infty \frac{  1 + \frac{\z^2}{\pi^2 (n - \frac{1}{2})^2}  }{ 1 + \frac{\z^2}{\pi^2 n^2}  }.
	\end{equation}
	The first part of the lemma now follows immediately, where the symmetric difference accounts for removable singularities should the term $(1+T\z^2)$ coincide with a term of the form $1 + \frac{\z^2}{\pi^2 (n - \frac{1}{2})^2}$. For the second part we start by showing that $\varrho_T$ is never negative away from the imaginary axis. As $\varrho_T$ is symmetric about zero, we restrict our attention to $\xi>0$. We have 
	\begin{align*}
		\mathrm{Re}\Big[\cosh(\z)\overline{\sinh(\z)}\Big] &= \frac{1}{2}\sinh(2\xi)>0, \\
		\mathrm{Re}\Big[\z\,\overline{(1+T\z^2)}\Big] &= \xi + \xi T(\xi^2+\eta^2)>0,
	\end{align*}
and consequently $|\arg(\frac{\z}{1+T\z^2})|,|\arg(\frac{1}{\tanh(\z)})|<\frac{\pi}{2}$. This in turn implies that $|\arg(\varrho_T(\zeta))|<\pi$, and so $\varrho_T(\zeta)$ cannot be negative. Restricting our attention to the imaginary axis $(\z=i\eta)$ and away from zeroes and singularities, it is clear from \eqref{eq:infiniteProductFormulaForRhoT} that $\varrho_T(i\eta)$ is real valued and satisfies 
	\begin{align*}
	\text{sgn}(\varrho_T(i\eta)) = \text{sgn}(1-T\eta^2)\prod_{n=1}^\infty \text{sgn}\Big(1 - \frac{\eta^2}{\pi^2 (n - \frac{1}{2})^2}\Big)\text{sgn}\Big( 1 - \frac{\eta^2}{\pi^2 n^2}\Big).
	\end{align*}
	As $\varrho_T(i\eta)$ is positive for $\eta=0$, it is negative exactly when an odd number of factors in the expression above has swapped sign. This is equivalent to the last part of the lemma.
\end{proof}

According to Lemma~\ref{lemma:singularities} the real-valued function $l_T$ can be extended analytically as $l_T = \sqrt{\varrho_T}$ outside of 
the zeroes and singularities of $\varrho_T$ along the imaginary axis.
By noting that the function $\frac{\cosh(\zeta)}{1+T\zeta^2}$ has a removable singularity at $\zeta=i\pi/2$ when $T=4/\pi^2$, as well as the fact
that, with $\zeta=\xi+i\eta$, $\sqrt{\CM \setminus (-\infty, 0]} = \CM_{\xi>0}$, we can record the following result. 

\begin{corollary}\label{cor:singularityDistance}
The symbol $l_T$ extends analytically onto the strip $\RM \times i(-\delta^*,\delta^*)$, where
	\begin{equation*}
		\delta^* =
		\begin{cases}
			\min \{ \frac{1}{\sqrt{T}}, \frac{\pi}{2} \}, \qquad & T \neq 4/\pi^2, \\
			\pi \qquad & T = 4/\pi^2.
		\end{cases}
	\end{equation*}
Hence, the  function $\z \mapsto \sqrt{\varrho_T(\sqrt{\z})}$ is the unique analytic extension of $l_T \circ \sqrt{\cdot}$ to $\CM \setminus (-\infty, 0]$.
\end{corollary}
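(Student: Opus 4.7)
The plan is to read off $\delta^*$ directly from Lemma~\ref{lemma:singularities}, and then define $l_T = \sqrt{\varrho_T}$ via the principal branch of the square root. The obstructions to analytic continuation of $l_T$ are points where $\varrho_T$ either vanishes, blows up, or takes a value in $(-\infty, 0)$; by Lemma~\ref{lemma:singularities}, both types live only on the imaginary axis. More precisely, zeros and poles of $\varrho_T$ occur exactly at heights $\eta \in Z_s \cup (Z_c \bigtriangleup Z_T)$, while negative real values occur at heights $\eta$ such that $(0,|\eta|) \cap ((Z_c \cup Z_s) \bigtriangleup Z_T)$ has odd cardinality.

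Next I would compute the nearest obstruction in each case. For $T \neq 4/\pi^2$ the numbers $\pi/2$ and $1/\sqrt{T}$ are distinct, both lie below $\pi$, and both belong to $Z_c \bigtriangleup Z_T$; hence the smallest positive element of $Z_s \cup (Z_c \bigtriangleup Z_T)$ is $\min(\pi/2, 1/\sqrt{T})$. For $T = 4/\pi^2$ the coincidence $1/\sqrt{T} = \pi/2$ removes $\pm \pi/2$ from $Z_c \bigtriangleup Z_T$; using the hinted fact that $\cosh(\z)/(1+T\z^2)$ has a removable singularity at $\z = \pm i\pi/2$, one sees that $\varrho_T$ extends analytically through those points with nonzero real value, so the next obstruction moves out to $\pi \in Z_s$. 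In either case, for $|\eta| < \delta^*$ the set $(0,|\eta|) \cap ((Z_c \cup Z_s) \bigtriangleup Z_T)$ is empty, so the sign formula of Lemma~\ref{lemma:singularities} yields $\varrho_T(i\eta) > 0$, eliminating the negative-value obstruction as well. Consequently, $\varrho_T$ is analytic and avoids $(-\infty, 0]$ on the entire strip $\RM \times i(-\delta^*, \delta^*)$, and the principal square root produces an analytic $l_T = \sqrt{\varrho_T}$ there.

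For the second claim, Lemma~\ref{lemma:singularities} already yields that $\varrho_T$ is analytic on $\CM$ and avoids $(-\infty, 0]$ on the open right half-plane $\{\xi > 0\}$. Since the principal square root maps $\CM \setminus (-\infty, 0]$ biholomorphically onto $\{\xi > 0\}$, the composition $\z \mapsto \sqrt{\varrho_T(\sqrt{\z})}$ is analytic on $\CM \setminus (-\infty, 0]$. Uniqueness then follows from the identity theorem: $\CM \setminus (-\infty, 0]$ is connected, and the candidate extension agrees with $l_T \circ \sqrt{\cdot}$ on the positive reals.

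The main obstacle, and the only point where something genuinely special happens, is the resonant case $T = 4/\pi^2$. Here one must verify both that $\varrho_T$ extends analytically through $\pm i\pi/2$ and that its value there is strictly positive, so as to allow $\delta^*$ to jump discontinuously from $\pi/2$ to $\pi$. The rest is bookkeeping with the symmetric differences introduced in Lemma~\ref{lemma:singularities}.
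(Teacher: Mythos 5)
Your proposal is correct and follows essentially the same route as the paper: read off the nearest obstruction from Lemma~\ref{lemma:singularities}, handle the resonance at $T=4/\pi^2$ via the removable singularity of $\cosh(\zeta)/(1+T\zeta^2)$, and use that the principal square root maps $\CM\setminus(-\infty,0]$ biholomorphically onto the right half-plane. One small imprecision in the non-resonant bookkeeping is worth noting: the assertion that, for $T\neq4/\pi^2$, the numbers $\pi/2$ and $1/\sqrt{T}$ ``both lie below $\pi$, and both belong to $Z_c\bigtriangleup Z_T$'' is not universally true --- it fails, for instance, when $T\leq1/\pi^2$ (so that $1/\sqrt{T}\geq\pi$) or when $1/\sqrt{T}=\pi(m-\tfrac12)$ for some $m\geq2$ (so that $1/\sqrt{T}$ is cancelled out of the symmetric difference). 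In all such cases, however, $\min\{\pi/2,1/\sqrt{T}\}=\pi/2$, and $\pi/2$ does belong to $Z_c\bigtriangleup Z_T$ since $\pi/2\neq1/\sqrt{T}$ by assumption, so the conclusion $\delta^*=\min\{\pi/2,1/\sqrt{T}\}$ stands; you should phrase the argument in terms of the minimum rather than making a blanket claim about both numbers.
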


We are now ready to prove Theorem \ref{thm:complete monotonicity}, where we determine a critical value $T_*=\frac{4}{\pi^2}$ of the surface tension 
\(T\), for which $K_T$ is completely monotone whenever $T\geq T_*$.
Note that $T_*$ \emph{does not} correspond to the, likewise critical, Bond number \(T = \frac{1}{3}\) that separates strong 
from weak surface tension; in fact, $T_*>\frac{1}{3}$.  Further, this result is sharp since, as we will see, $K_T$ is not monotone for $T\in(0,T_*)$.
To establish this, we make use of the class of so-called positive definite functions.  A function
$f:\RM\to\CM$ is said to be \emph{positive definite} if for every $n\in\NM$ and \(\boldsymbol\xi\in\RM^n\) the $n\times n$ matrix $[f(\xi_i-\xi_j)]_{i,j=1}^n$ is positive semi-definite.  
We point out the following standard results \cite{Bhatia2007pdf}.

\begin{lemma}\label{L:Positive_Def_Facts}
The following statements are true.
\begin{itemize}
\item[(i)] [Bochner's Theorem] Any positive definite function is the Fourier transform of a non-negative, finite Borel measure.
\item[(ii)] [Shur's Theorem] A countable product of positive definite functions is positive definite.
\item[(iii)] If $f:\RM\to\CM$ is positive definite, then the global maximum of $f$ occurs at $x=0$.
\item[(iv)] The function $f(x)=\frac{1+ax^2}{1+bx^2}$ is positive definite if and only if $b\geq a\geq 0$.
\end{itemize}
\end{lemma}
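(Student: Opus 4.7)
The plan is to handle the four claims separately. For (i) and (ii), I would simply quote the corresponding results from \cite{Bhatia2007pdf}: Bochner's theorem follows from the Riesz representation theorem applied to the positive linear functional naturally associated to a continuous positive-definite $f$, while Schur's theorem is the limiting version of the Hadamard (entrywise) product theorem for positive semidefinite matrices. Since both are completely classical, reproving them in detail here is unwarranted.

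For (iii), I would specialise the defining inequality to $n = 2$ with nodes $\xi_1 = 0$ and $\xi_2 = x$. Positive definiteness then forces the Hermitian matrix
\begin{equation*}
\begin{pmatrix} f(0) & f(-x) \\ f(x) & f(0) \end{pmatrix}
\end{equation*}
to be positive semidefinite. This yields $f(0) \geq 0$, $f(-x) = \overline{f(x)}$, and a nonnegative determinant $f(0)^2 - |f(x)|^2 \geq 0$, i.e., $|f(x)| \leq f(0)$ for every $x \in \RM$.

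For (iv), my approach is to combine (i) and (iii) with the partial-fraction identity
\begin{equation*}
f(x) = \frac{a}{b} + \frac{b-a}{b} \cdot \frac{1}{1 + bx^2}, \qquad b > 0,
\end{equation*}
together with the standard Fourier transform $\int_\RM \frac{e^{-ix\xi}}{1+bx^2}\,\diff x = \pi b^{-1/2} e^{-|\xi|/\sqrt{b}}$. The sufficiency direction is then immediate: for $0 \leq a \leq b$, the function $f$ is the Fourier transform of the nonnegative Borel measure
\begin{equation*}
\frac{2\pi a}{b}\delta_0 + \frac{\pi (b-a)}{b^{3/2}}\, e^{-|\xi|/\sqrt{b}} \diff \xi,
\end{equation*}
so by (i) it is positive definite. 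For the necessary direction, (iii) excludes $b < 0$ (which produces real poles in $f$) and $b = 0$ paired with $a \neq 0$ (which makes $f$ an unbounded polynomial); when $b > 0$, the limit $f(x) \to a/b$ as $|x| \to \infty$ combined with $|f(x)| \leq f(0) = 1$ gives $|a| \leq b$. The remaining case $-b \leq a < 0$ is ruled out by the above decomposition, which would then force $\hat{f}$ to contain a strictly negative point mass at the origin, contradicting (i). The main (minor) obstacle is simply organising the necessity argument cleanly over the sign cases for $b$, but each case is a direct consequence of (i) or (iii).
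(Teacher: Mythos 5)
The paper offers no proof of this lemma at all --- it simply labels the four facts as ``standard results'' and cites Bhatia's book. Your treatment of (i) and (ii) therefore matches the paper (citation only), while for (iii) and (iv) you supply genuine arguments, which are correct. The $2\times 2$ minor argument for (iii) is the standard one. For (iv), the partial-fraction decomposition $f = \frac{a}{b} + \frac{b-a}{b}\cdot\frac{1}{1+bx^2}$ together with the explicit Fourier transform is the right idea; note that the measure you display is off by a factor of $2\pi$ under the paper's convention $\hat f(\xi) = \int f(x)e^{-ix\xi}\diff x$ (you should take $\mu = \frac{a}{b}\,\delta_0 + \frac{b-a}{2\,b^{3/2}}\,e^{-|\xi|/\sqrt{b}}\diff\xi$), but this normalization slip does not affect validity. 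Two further small remarks: you invoke (i) in its converse direction (FT of a nonnegative finite measure $\Rightarrow$ PD), which is also part of Bochner's theorem but is not literally what item (i) asserts; and the ``only if'' direction as stated has a trivial exception at $a=b<0$ (then $f\equiv 1$ is PD while $a\geq 0$ fails), though the paper never needs the ``only if'' direction at all --- Theorem~3.6 uses only the sufficiency with $0<a<b$ and the bound in (iii). Aside from these cosmetic points, your argument is sound and more informative than the paper's bare citation.
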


With the above preliminaries, we now state the main result for this section.

\begin{theorem}\label{thm:complete monotonicity}
For $T\geq\frac{4}{\pi^2}$, the kernel $K_T$ is completely monotone on $(0,\infty)$.  Further, for $0<T<\frac{4}{\pi^2}$ the kernel $K_T$ is not monotone on $(0,\infty)$.
\end{theorem}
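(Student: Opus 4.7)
The proof splits according to the two claims.

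\textbf{Complete monotonicity for $T\geq 4/\pi^2$.} By Lemma~\ref{thm:complMonCrit} it suffices to show that $g(\lambda):=l_T(\sqrt\lambda)=\sqrt{\varrho_T(\sqrt\lambda)}$ is Stieltjes; analyticity on $\CM\setminus(-\infty,0]$ and the boundary values $g(0^+)=1$, $g(\infty)=0$ are already supplied by Corollary~\ref{cor:singularityDistance} and \eqref{eq:symbol_limit_at_zero}, so only the imaginary-part condition remains. Setting $h(w):=\varrho_T(\sqrt w)$ so that $g=\sqrt h$, the expansion \eqref{eq:infiniteProductFormulaForRhoT} places the poles of $h$ on $(-\infty,0)$ at $\{-1/T,\,-(k\pi)^2\}_{k\geq 1}$ and its zeros at $\{-((k-1/2)\pi)^2\}_{k\geq 1}$. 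The condition $T\geq 4/\pi^2$ places $-1/T\in[-\pi^2/4,0)$, so these interlace strictly with a pole closest to the origin; re-pairing yields
\[
h(w)=\frac{1+(4/\pi^2)w}{1+Tw}\prod_{k\geq 1}\frac{1+w/((k+1/2)\pi)^2}{1+w/(k\pi)^2},
\]
each factor of the form $(1+aw)/(1+bw)$ with $0\leq a\leq b$, hence Stieltjes (via the identity $(1+aw)/(1+bw)=a/b+((b-a)/b^2)(w+1/b)^{-1}$). The finite partial products are rational Stieltjes functions, since by the interlacing their partial-fraction expansions have strictly positive residues; since the defining inequality $\mathrm{Im}(w)\mathrm{Im}(h(w))\leq 0$ is closed under pointwise limits, $h$ is Stieltjes, and the principal branch of $\sqrt{\cdot}$ maps the closed lower half-plane into the closed fourth quadrant, so $g=\sqrt h$ is Stieltjes. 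The representation $g(\lambda)=\int_{(0,\infty)}(\lambda+t)^{-1}\,d\sigma(t)$, combined with $\mathcal F^{-1}((\xi^2+t)^{-1})(x)=e^{-\sqrt t\,|x|}/(2\sqrt t)$, then gives
\[
K_T(x)=\int_{(0,\infty)}\frac{e^{-\sqrt t\,x}}{2\sqrt t}\,d\sigma(t)\qquad (x>0),
\]
which is manifestly a Laplace transform in $\sqrt t$ of a positive measure, and hence completely monotone.

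\textbf{Non-monotonicity for $0<T<4/\pi^2$.} I argue by contradiction. Since $K_T(0^+)=+\infty$ and $K_T(\infty)=0$, a monotone $K_T$ would be non-negative, so Bochner would force $l_T$ positive definite and Shur (Lemma~\ref{L:Positive_Def_Facts}(ii)) would force $\varrho_T=l_T^2$ positive definite as well. For $T<1/3$ the Taylor expansion $\varrho_T(\xi)=1+(1/3-T)\xi^2+O(\xi^4)$ immediately contradicts Lemma~\ref{L:Positive_Def_Facts}(iii). For $T\in[1/3,4/\pi^2)$ I would use the factorization $\varrho_T=\tilde\varrho\cdot r$ with $\tilde\varrho:=\varrho_{4/\pi^2}=\xi\coth\xi/(1+(4/\pi^2)\xi^2)$ and $r(\xi):=(1+(4/\pi^2)\xi^2)/(1+T\xi^2)$. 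A partial-fraction computation gives
\[
\mathcal F^{-1}(r)(x)=\frac{4}{T\pi^2}\delta(x)+\frac{T-4/\pi^2}{2T^{3/2}}\,e^{-|x|/\sqrt T},
\]
whose exponential coefficient is strictly negative. Writing $\tilde K:=\mathcal F^{-1}(\tilde\varrho)=K_{4/\pi^2}\ast K_{4/\pi^2}\geq 0$ (using Part~1) and exploiting $K_T\ast K_T=\mathcal F^{-1}(\varrho_T)=\tilde K\ast\mathcal F^{-1}(r)$, one observes that the closest singularity of $\tilde\varrho$ on the imaginary axis is the simple pole at $\pm i\pi$, so $\tilde K$ decays like $e^{-\pi|x|}$; for $T>1/\pi^2$ (automatic since $T\geq 1/3>1/\pi^2$) this rate is strictly faster than $1/\sqrt T$, so the convolution $\tilde K\ast e^{-|\cdot|/\sqrt T}$ inherits the slower rate $1/\sqrt T$. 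The negative exponential term therefore dominates the right-hand side at large $x$, producing $K_T\ast K_T(x)<0$ in contradiction to $K_T\geq 0$. Since $1/\pi^2<1/3<4/\pi^2$, the two subcases together cover all of $(0,4/\pi^2)$.

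\textbf{Main obstacle.} The delicate step is the subcase $T\in[1/3,4/\pi^2)$, where the naive max-at-zero obstruction from Lemma~\ref{L:Positive_Def_Facts}(iii) is no longer available. There one must combine the structural factorization $\varrho_T=\tilde\varrho\cdot r$ (with $\tilde\varrho$ the critical $T=4/\pi^2$ symbol from Part~1) with sharp asymptotic information on $\tilde K$ near infinity to extract the sign contradiction, and verifying that the comparison of the decay rates $\pi$ versus $1/\sqrt T$ truly yields dominance of the negative term requires some care with the lower-order corrections in the asymptotic expansion.
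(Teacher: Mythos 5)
Your proof is correct, and it largely parallels the paper's approach while differing in the technical tools used at two key junctures.

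\textbf{Part 1.} Both you and the paper start from the re-grouped infinite product \eqref{eq:rhoRewritten} and exploit the interlacing of poles $\{-1/T,-(k\pi)^2\}$ and zeros $\{-((k-\tfrac12)\pi)^2\}$ of $w\mapsto\varrho_T(\sqrt w)$ on $(-\infty,0)$, with $T\geq 4/\pi^2$ ensuring the pole nearest the origin comes first. The paper then argues directly that the complex argument of $\varrho_T(\sqrt\zeta)$ is a sum of non-positive bracketed terms for $\mathrm{Im}\,\zeta>0$, hence lies in $(-\pi,0]$; you instead pass through the Stieltjes property of the finite partial products (positive residues by interlacing) and a pointwise-limit argument. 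Both are standard and correct. One caveat: your sentence ``each factor of the form $(1+aw)/(1+bw)$ \dots hence Stieltjes'' is true of a single factor but does not by itself justify the product, since Stieltjes functions are not closed under multiplication; the load-bearing statement is the subsequent one about interlacing and positive residues for the partial products, and you should lead with that.

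\textbf{Part 2.} The case $T<1/3$ is handled identically (failure of the max-at-zero criterion). For $T\in[1/3,4/\pi^2)$ you use the same factorization as the paper: what you call $\tilde\varrho\cdot r$ is exactly the paper's $\varphi\cdot\bigl(\alpha-\tfrac{\beta}{1+T\xi^2}\bigr)$, with $\tilde\varrho=\varphi$. Where you diverge is the mechanism for the contradiction: the paper observes $\varrho_T=\alpha\varphi-\beta\psi$ with $\widehat\varphi,\widehat\psi\geq0$ by Schur/Bochner, then uses Paley--Wiener to show $\int\widehat\varphi e^{\gamma x}\diff x<\infty$ while $\int\widehat\psi e^{\gamma x}\diff x=+\infty$ for suitable $\gamma\in(1/\sqrt T,\pi)$, forcing $\int\widehat{\varrho_T}e^{\gamma x}\diff x=-\infty$. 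You instead compute $\mathcal F^{-1}(r)$ explicitly, extract the negative coefficient $\tfrac{T-4/\pi^2}{2T^{3/2}}$, and argue that since $\tilde K$ decays strictly faster than $e^{-|x|/\sqrt T}$ (as $1/\sqrt T<\pi$), the negative slow-decaying term dominates $\widehat{\varrho_T}(x)=K_T\ast K_T(x)$ pointwise as $x\to\infty$. This is a more explicit, pointwise version of the same idea, and it works: multiplying by $e^{x/\sqrt T}$ and taking $x\to\infty$ gives the limit $\tfrac{T-4/\pi^2}{2T^{3/2}}\int\tilde K(y)e^{y/\sqrt T}\diff y<0$, with convergence of the integral guaranteed precisely by the rate gap. (You need $\tilde K(x)=o(e^{-x/\sqrt T})$, not the slightly overprecise claim of decay exactly at rate $\pi$; the bound $e^{-\delta|x|}$ for all $\delta<\pi$ from Theorem~\ref{thm:regularity and decay} and Corollary~\ref{cor:singularityDistance} applied to $K_{4/\pi^2}\ast K_{4/\pi^2}$ suffices.) Your route buys explicitness at large $x$; the paper's $L^1$-weighted Paley--Wiener argument is shorter and sidesteps the asymptotic bookkeeping of the convolution.
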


\begin{proof}
We first prove that \(K_T\) is completely monotone for \(T\geq\frac{4}{\pi^2}\).
By Lemma \ref{thm:complMonCrit} and Lemma \ref{lemma:stieltjes} and the discussion thereafter, we conclude that \(K_T\) is completely monotone exactly if $\mathrm{Im}(\z)\cdot \mathrm{Im}\sqrt{\varrho_T(\sqrt{\z})} \leqslant 0$ for $\z\in\CM \setminus (-\infty, 0]$. Moreover, this last property is satisfied exactly when it is satisfied for $\varrho_T\circ\sqrt{\cdot}$. 
Moving the first factor of \(\cosh\zeta\) out of the infinite product in \eqref{eq:infiniteProductFormulaForRhoT}, we obtain
\begin{align}\label{eq:rhoRewritten}
    \varrho_T(\xi)=\frac{1+\frac{4}{\pi^2}\xi^2}{1+T\xi^2}\prod_{n=1}^\infty \frac{  1 + \frac{\xi^2}{\pi^2 (n + \frac{1}{2})^2}  }{ 1 + \frac{\xi^2}{\pi^2 n^2}  }.
\end{align}
Substituting \(\xi\mapsto\sqrt{\zeta}\) in \eqref{eq:rhoRewritten}, and taking the complex argument of both sides, we obtain
\begin{equation}\label{eq:argumentOfRhoT}
\begin{split}
	\arg\big(\varrho_T\big(\sqrt{\z}\big)\big) =& \Big[\arg\Big(1+\frac{4}{\pi^2}\z\Big)-\arg(1+T\z)\Big]\\
	&+\sum_{n=1}^\infty \Big[\arg\Big(1 + \frac{\z}{\pi^2 (n + \frac{1}{2})^2}\Big)-\arg\Big( 1 + \frac{\z}{\pi^2 n^2}\Big)\Big].
	\end{split}
\end{equation}
This equation is valid whenever the right hand side takes values in $(-\pi,\pi)$, which in turn is always true in $\zeta\in\CM\setminus(-\infty,0]$ as it is continuous in $\zeta$, zero for $\z>0$ and prevented from taking the values \(\pm\pi\) as $\varrho_T(\sqrt{\z})$ is never negative (Lemma \ref{lemma:singularities}). When Im$(\z) > 0$, it is easy to see that $\alpha\mapsto\arg(1+\alpha\z)$ is strictly increasing for $\alpha>0$, and so each square bracket in \eqref{eq:argumentOfRhoT} is negative (the first non-positive), further implying $\mathrm{Im}(\z)\cdot \mathrm{Im}\sqrt{\varrho_T(\sqrt{\z})} < 0$. After a similar argument for Im$(\z)<0$, we obtain the desired conclusion.  

We now prove that $K_T$ is not a monotone function on $(0,\infty)$ for $0<T<\frac{4}{\pi^2}$.  
Since Theorem \ref{thm:regularity and decay} guarantees that \(K_T\) is  positive near zero and decays to zero at infinity, 
the existence of a point \(K_T(x_0)<0\) would rule out monotonicity of \(K_T\). To this end, we note by Bochner's theorem in Lemma \ref{L:Positive_Def_Facts}(i)
that $K_T$ is non-negative if and only if its Fourier transform $l_T$ is a positive definite function; we now prove this is false when \(0<T<\frac{4}{\pi^2}\). 
Note first that for $0<T<\frac{1}{3}$, this follows immediately from Lemma \ref{L:Positive_Def_Facts}(iii) as $l_T$ does not have a global maximum at $\xi=0$ (see Figure \ref{F:symbol}).
Suppose instead that \(\frac{1}{3}\leq T<\frac{4}{\pi^2}\). If $l_T$ is positive definite, then Lemma \ref{L:Positive_Def_Facts}(ii) implies the same would be true for its square 
\(\xi\mapsto\varrho_T(\xi)\). To this end, we write \eqref{eq:rhoRewritten} as
\[
\varrho_T(\xi)=\frac{1+\frac{4}{\pi^2}\xi^2}{1+T\xi^2}~\varphi(\xi),
\]
which, after introducing the positive constants 
$\alpha=4/(T\pi^2)$ and $\beta=\alpha-1$, 
can be further rewritten as 
\begin{align*}
    \varrho_T(\xi)=\Big(\alpha-\frac{\beta}{1+T\xi^2}\Big)\varphi(\xi)=\colon\alpha\varphi(\xi)-\beta\psi(\xi).
\end{align*}
By Lemma \ref{L:Positive_Def_Facts}, both \(\varphi\) and \(\psi\) are positive definite as they are (countable) products of positive definite functions, and thus \(\hat{\varphi},\hat{\psi}\geq0\) by Bochner's Theorem.  
Note that \(\varphi\) has a complex analytic extension to the strip \(\RM\times i(-\pi,\pi)\), 
while \(\psi\) can not be extended to a larger strip than \(\RM\times i(\frac{-1}{\sqrt{T}},\frac{1}{\sqrt{T}})\), and so by the Paley-Wiener theorem, we have 
\[
0<\int_{\RM}\widehat\varphi(x)e^{\gamma x}\,\diff x<\infty\quad{\rm and}\quad\int_{\RM}\widehat\psi(x)e^{\gamma x}\,\diff x=+\infty,
\]
which further implies that
\[
\int_{\RM}\widehat{\varrho_T}(x)e^{\gamma x}\,\diff x=-\infty.
\]
By Bochner's Theorem, \(\xi\mapsto\varrho_T(\xi)\) is not positive definite, and so neither is \(l_T\), which concludes the proof.
\end{proof}

Before we end this section, we note that there is a range of values of strong surface tension $T\in(\frac{1}{3},\frac{4}{\pi^2})$ where the kernel $K_T$ is not
monotone. As we will see, this has implications when trying to  
 establish monotonicity of solutions along the supercritical global solution branches described in Section \ref{s:global} below; see Proposition~\ref{prop:leaving_c/2} and the discussion in Section~\ref{sec:discussion} in general.

\subsection{Regularity properties and decay}
In this subsection we split \(K_T\) according to its singularities, and determine the precise regularity of these (there are two of them, both at the origin). We also record the rapid decay and smoothing properties of \(K_T\). Write 
\[
l_T = l_{-\frac{1}{2}} + l_{\frac{3}{2}} + l_\omega,
\]
with \(l_{-\frac{1}{2}}(\xi) = \frac{1}{\sqrt{T \lvert \xi \rvert}}\), \(l_{\frac{3}{2}}(\xi) = \sqrt{\frac{\lvert \xi \rvert}{1 + T \xi^2}} - \frac{1}{\sqrt{T \lvert \xi \rvert}}\) and \(l_\omega(\xi) = l_T(\xi) - \sqrt{\frac{\lvert \xi \rvert}{1 + T \xi^2}}\).
The subscripts represent the regularity of each corresponding term of \(K_T\), as will be seen.  The decay of \(l_{-\frac{1}{2}}(\xi) \eqsim |\xi|^{-\frac{1}{2}}\) for $|\xi|\gg 1$ is clear, and for any fixed \(T>0\), it is readily seen that 
\[
l_\frac{3}{2}(\xi) \eqsim \lvert \xi \rvert^{-\frac{5}{2}}, 
\]
and
\[
l_\omega(\xi) = \sqrt{\frac{\xi}{1 + T \xi^2}} \left( \sqrt{\coth(\xi)} - 1 \right) \eqsim \xi^{-\frac{1}{2}} \, e^{-2\xi},
\]
both for \(|\xi| \gg 1\).

To establish the regularity of the corresponding parts of \(K_T\) we shall use Zygmund spaces. Let $\{\psi_j^2\}_{j = 0}^\infty$ be a partition of unity with \(\psi_0(\xi)\) supported in \(|\xi| \leq 1\), \(\psi_1(\xi)\) supported in \(\frac{1}{2} \leq |\xi| \leq 2\),  and $\psi_j(\xi) = \psi_1(2^{1-j}\xi)$ for \(j \geq 2\). Then the support of each $\psi_j$ is concentrated around $\xi \eqsim 2^j$. With \(D = -i\partial_x\), the Fourier multiplier operators $\psi_j(D) \colon f \mapsto \mathcal{F}^{-1}(\psi_j \hat f)$ characterises the Zygmund spaces: we say \(u \in \zygmund^s(\RM)\) if
	\begin{equation}
		\label{eq:zygmund}
	\| u \|_{\zygmund^s(\RM)} = \sup_j \,2^{js}\, \| \psi_j^2(D) u \|_{L^\infty}
	\end{equation}
is finite. For non-integer values of \(s \geq 0\) the Zygmund spaces coincide with the standard (inhomogeneous) H\"older spaces\footnote{Throughout, we use the notation
that $\NM_0:=\NM\cup\{0\}$.}, 
\begin{equation*}
	\zygmund^s(\RM) \cong C^s(\RM), \qquad s \in \RM_+ \setminus \NM_0,
\end{equation*}
and one furthermore has the embedding \(C^k(\RM) \hookrightarrow \zygmund^k(\RM)\) for integer values of \(k\). We refer the reader to \cite[Section 13.8]{Taylor2011pde} and \cite[Section 1.4]{MR3243741} for further details.

Now, the symbols \(l_{-\frac{1}{2}}\), \(l_{\frac{3}{2}}\) and \(l_\omega\) all have well-defined Fourier transforms, and we let
\begin{align*}
	K_{-\frac{1}{2}}(x) &= \mathcal{F}^{-1}(1/\sqrt{T \lvert \cdot \rvert})(x), \\
	K_{\frac{3}{2}}(x) &= \mathcal{F}^{-1}(l_{\frac{3}{2}})(x), \\
	K_\omega(x) &= \mathcal{F}^{-1}(l_\omega)(x),
\end{align*}
so that
\begin{equation*}
	K_T(x) = \mathcal{F}^{-1}(l_T)(x) = K_{-\frac{1}{2}}(x) + K_{\frac{3}{2}}(x) + K_\omega(x).
\end{equation*}
From Fourier analysis we know that \(\mathcal{F}^{-1}(1/\sqrt{ \lvert \cdot \rvert})(x)=1/\sqrt{ 2\pi\lvert x \rvert}\) and, additionally,  that the exponential
decay of $l_\omega(\xi)$ for $|\xi|\gg 1$ implies that $K_\omega$ is real-analytic by the Paley--Wiener theorem.
The optimal regularity of $K_{\frac{3}{2}}$ follows from the following theorem about the integral kernel \(K_T\).

\begin{theorem}\label{thm:regularity and decay}
The integral kernel \(K_T\) may be written as 
\[
K_T(x) = \frac{1}{\sqrt{2\pi T|x|}} + K_{\frac{3}{2}}(x) + K_\omega(x), 
\]
where the second term belongs to the optimal H\"older class \(C^\frac{3}{2}\) and the third is real-analytic. The singularity of \(K_T\) thus has the characterization
\[
		\lim_{x \rightarrow 0} \sqrt{\lvert x \rvert} \, K_T(x) = \frac{1}{\sqrt{2\pi T}}.
\] Moreover, 
\[
\lvert K_T(x)\rvert \lesssim e^{-\delta \lvert x \rvert}  \qquad \text{ for } |x| > 1,
\] 
with $\delta < \delta^*$ as given in Corollary~\ref{cor:singularityDistance}. As a consequence, \(K_T\in L^1(\RM)\).
\end{theorem}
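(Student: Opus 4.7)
The plan is to establish the decomposition by identifying the inverse Fourier transform of each piece, to extract the stated regularity componentwise via standard Fourier techniques, and to obtain exponential decay of $K_T$ via a contour-shift argument applied directly to $l_T$.

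For the explicit form $K_{-\frac{1}{2}}(x) = (2\pi T|x|)^{-1/2}$ I would invoke the classical distributional identity $\mathcal{F}^{-1}(|\xi|^{-1/2})(x) = (2\pi|x|)^{-1/2}$ from Riesz potential theory. The estimate $|l_\omega(\xi)| \lesssim |\xi|^{-1/2} e^{-2|\xi|}$ already recorded, paired with the Paley--Wiener theorem, gives real-analyticity of $K_\omega$. For $K_{\frac{3}{2}}$, I would use the Littlewood--Paley characterization in \eqref{eq:zygmund}: for $j\geq 2$ the support of $\psi_j^2$ lies in $|\xi|\eqsim 2^j$, where $|l_{\frac{3}{2}}(\xi)|\lesssim|\xi|^{-5/2}$, so
\[
\|\psi_j^2(D) K_{\frac{3}{2}}\|_{L^\infty} \leq \|\psi_j^2 l_{\frac{3}{2}}\|_{L^1} \lesssim 2^{j}\cdot 2^{-5j/2} = 2^{-3j/2},
\]
while the low frequencies are dealt with using the locally integrable $|\xi|^{-1/2}$ singularity of $l_{\frac{3}{2}}$ at the origin. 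This gives $K_{\frac{3}{2}}\in\zygmund^{\frac{3}{2}} = C^{\frac{3}{2}}$. The singularity formula $\lim_{x\to 0}\sqrt{|x|}\, K_T(x) = (2\pi T)^{-1/2}$ then follows immediately from the decomposition since $K_{\frac{3}{2}}$ and $K_\omega$ are both bounded near $0$.

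For the exponential decay I would work with $l_T$ directly, as the individual slow-decay tails of $K_{-\frac{1}{2}}$ and $K_{\frac{3}{2}}$ are not exponentially small. Since $l_T \in C^\infty(\RM)$ (analytic near each real point, since $m_T(0)=1$) and $l_T(\xi)\to 0$ as $|\xi|\to\infty$, one integration by parts gives, for $x\neq 0$,
\[
2\pi K_T(x) = -\frac{1}{ix}\int_{\RM} l_T'(\xi)\, e^{ix\xi}\,\diff\xi,
\]
with the right-hand side absolutely convergent because $|l_T'(\xi)|\lesssim |\xi|^{-3/2}$ at infinity. By Corollary~\ref{cor:singularityDistance}, $l_T'$ extends analytically to $\RM\times i(-\delta^*,\delta^*)$; applying Cauchy's theorem to a rectangle whose vertical sides vanish in the limit (since $|l_T'(\pm R + it)|\to 0$ uniformly for $|t|\leq\delta$) shifts the contour to $\mathrm{Im}\,\xi = \mathrm{sgn}(x)\delta$ for any $\delta<\delta^*$, producing the factor $e^{-\delta|x|}$. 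One concludes $|K_T(x)|\lesssim e^{-\delta|x|}/|x|$, hence the claim for $|x|>1$, and then $K_T\in L^1(\RM)$ follows from the integrable $|x|^{-1/2}$ singularity at $0$ combined with the exponential decay at infinity.

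The main obstacle is verifying the \emph{optimality} of the Hölder exponent for $K_{\frac{3}{2}}$: the upper bound $K_{\frac{3}{2}}\in C^{\frac{3}{2}}$ is routine Littlewood--Paley, but ruling out $C^s$ for any $s>\frac{3}{2}$ requires extracting the precise $|x|^{3/2}$-cusp at the origin from the leading asymptotic $l_{\frac{3}{2}}(\xi)\sim -\frac{1}{2T^{3/2}}|\xi|^{-5/2}$ at infinity via distributional Fourier analysis.
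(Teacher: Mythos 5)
Your proposal takes essentially the same route as the paper: the explicit form of $K_{-\frac{1}{2}}$ comes from the standard Fourier transform of $|\xi|^{-1/2}$, real-analyticity of $K_\omega$ follows from the exponential decay of $l_\omega$ and Paley--Wiener, and the Zygmund regularity of $K_{\frac{3}{2}}$ is obtained from exactly the same dyadic estimate $\|\psi_j^2(D)K_{\frac{3}{2}}\|_{L^\infty}\lesssim \int_{2^{j-2}}^{2^j}\xi^{-5/2}\,\diff\xi\eqsim 2^{-3j/2}$. Your contour-shift argument with the preliminary integration by parts fleshes out what the paper compresses into a single citation to Corollary~\ref{cor:singularityDistance} and Paley--Wiener, but the substance is identical. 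On the optimality of the exponent $\frac{3}{2}$: the concern you raise is legitimate, but you should know that the paper's own proof also establishes only the upper bound $K_{\frac{3}{2}}\in\zygmund^{3/2}$ and does not supply an argument that this class is optimal, so this is not a deficiency unique to your write-up.
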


\begin{proof}
Most of the first claim was established in the preceding discussion: only the regularity of \(K_{\frac{3}{2}}\) remains. We have $\psi^2_j(D)K_{\frac{3}{2}} = \mathcal{F}^{-1}\left(\psi^2_j l_{\frac{3}{2}} \right)$ and, using the $L^1$-norm to estimate the infinity norm, we have that
	\begin{equation*}
		\| \psi_j^2(D) K_{\frac{3}{2}} \|_{L^\infty} \lesssim  \int_{2^{j-2}}^{2^{j}} \lvert l_{\frac{3}{2}}(\xi)\rvert \diff \xi \lesssim  \int_{2^{j-2}}^{2^{j}} \xi^{-\frac{5}{2}} \diff \xi \eqsim  2^{-\frac{3}{2}j}.
	\end{equation*}
Thus
	\begin{equation*}
		\sup_j 2^{\frac{3}{2}j} \, 	\| \psi_j^2(D) K_{\frac{3}{2}} \|_{L^\infty} \lesssim 1,
	\end{equation*}
which proves that $K_{\frac{3}{2}}(x) \in \zygmund^{\frac{3}{2}}(\RM)$. As for the decay rate of \(K_T\), it is a direct consequence of Corollary~\ref{cor:singularityDistance} and 
the Paley--Wiener theorem.
\end{proof}

We conclude this section by recording some mapping properties of the  convolution operator \(L_T = K_T \ast\). Let $\SM$ be the one-dimensional unit sphere of circumference \(2\pi\), and note that the H\"older and Zygmund spaces are straightforward to define on the compact manifold \(\SM\) (these are the \(2\pi\)-periodic functions in the larger spaces \(C^s(\RM)\) and \(\zygmund^s(\RM))\). 

\begin{lemma}\label{lem:compactOperator}
For each \(T> 0\) and each \(s \geq 0\), $L_T$ is a continuous linear mapping \(\zygmund^s(\RM) \to \zygmund^{s+1/2}(\RM)\) and  is hence compact on $\zygmund^s(\SM)$.
\end{lemma}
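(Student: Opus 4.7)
The plan is to prove the claim in two steps: first, to show the continuity $L_T \colon \zygmund^s(\RM) \to \zygmund^{s+1/2}(\RM)$ by treating $l_T$ as a smooth Fourier multiplier of order $-1/2$ via a Littlewood--Paley analysis, and second, to deduce compactness on $\zygmund^s(\SM)$ from the compact Rellich-type embedding $\zygmund^{s+1/2}(\SM) \hookrightarrow \zygmund^s(\SM)$ available on the compact manifold $\SM$.

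For the first step I would fix a smooth bump $\chi$ equal to $1$ on the support of $\psi_1^2$ and supported in a slightly larger annulus, and set $\chi_j(\xi) = \chi(\xi/2^{j-1})$ for $j \geq 1$, so that $\psi_j^2(D) L_T = \psi_j^2(D) (l_T \chi_j)(D)$. Rescaling to the unit annulus gives $l_T(2^{j-1}\xi)\chi(\xi) = 2^{-j/2} h_j(\xi)$, where $\{h_j\}$ is a bounded family in $C_c^\infty(\RM)$; this follows from the symbol estimates $|\partial^\alpha l_T(\xi)| \lesssim (1+|\xi|)^{-1/2-\alpha}$ for $\alpha \in \NM_0$, which are straightforward from the explicit form of $l_T = m_T^{-1}$ together with the strip analyticity established in Corollary \ref{cor:singularityDistance}. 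Scaling then yields $\|\mathcal{F}^{-1}(l_T \chi_j)\|_{L^1(\RM)} \lesssim 2^{-j/2}$, and Young's convolution inequality delivers
\begin{equation*}
	\|\psi_j^2(D) L_T f\|_{L^\infty(\RM)} \lesssim 2^{-j/2} \max_{|k-j|\leq N} \|\psi_k^2(D) f\|_{L^\infty(\RM)}
\end{equation*}
for a fixed $N$ determined only by the partition $\{\psi_k\}$; the low frequency $j=0$ is even simpler since $\psi_0^2 l_T$ is a fixed compactly supported smooth function. Multiplying through by $2^{j(s+1/2)}$ and taking the supremum over $j$ gives $\|L_T f\|_{\zygmund^{s+1/2}(\RM)} \lesssim \|f\|_{\zygmund^s(\RM)}$.

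For the transfer to $\SM$ and the compactness conclusion, I would use that $K_T \in L^1(\RM)$ by Theorem \ref{thm:regularity and decay} to see that convolution with $K_T$ preserves $2\pi$-periodicity, so that the estimate above restricts to a continuous map $L_T \colon \zygmund^s(\SM) \to \zygmund^{s+1/2}(\SM)$. Because $\SM$ is compact, the inclusion $\zygmund^{s+1/2}(\SM) \hookrightarrow \zygmund^s(\SM)$ is compact: a bounded sequence has uniformly decaying high-frequency Littlewood--Paley tails, while the finite low-frequency part is precompact by Arzel\`a--Ascoli. Composing then gives compactness of $L_T$ on $\zygmund^s(\SM)$. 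The main technical obstacle is the uniform scaling estimate $\|\mathcal{F}^{-1}(l_T \chi_j)\|_{L^1(\RM)} \lesssim 2^{-j/2}$, that is, the verification that $l_T$ is a Mikhlin-type symbol of order $-1/2$ all the way down to the pointwise derivative bounds; here the decomposition $l_T = l_{-1/2} + l_{3/2} + l_\omega$ introduced before Theorem \ref{thm:regularity and decay} is useful, since each summand has an explicit large-frequency behaviour that is inherited by every derivative.
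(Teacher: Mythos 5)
Your proof is correct and follows essentially the same route as the paper: establish the $\zygmund^s(\RM)\to\zygmund^{s+1/2}(\RM)$ bound by a Littlewood--Paley block estimate, then invoke the compact embedding $\zygmund^{s+1/2}(\SM)\hookrightarrow\zygmund^s(\SM)$. The only substantive difference is that you spell out the block estimate carefully --- rescaling to the unit annulus and using the Mikhlin-type derivative bounds $|\partial^\alpha l_T(\xi)|\lesssim(1+|\xi|)^{-1/2-\alpha}$ to get $\|\mathcal{F}^{-1}(l_T\chi_j)\|_{L^1}\lesssim 2^{-j/2}$ --- whereas the paper states the key inequality $\|\psi_j^2(D)L_T u\|_{L^\infty}\leq 2^{-j/2+2}\|\psi_j^2(D)u\|_{L^\infty}$ as a ``straightforward calculation'' from boundedness and decay alone; you are right that pointwise decay of $l_T$ does not by itself control $\|\mathcal{F}^{-1}(l_T\chi_j)\|_{L^1}$, so the derivative estimates you invoke are genuinely needed. (The $\max_{|k-j|\leq N}$ on the right-hand side is harmless but unnecessary, since Fourier multipliers commute and $\psi_j^2(D)L_T u=(l_T\chi_j)(D)\,\psi_j^2(D)u$ already lands on the same block.)
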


\begin{proof} 
	Let $u \in \zygmund^s(\SM)$. Using  that $\psi_j^2(D)u = \mathcal{F}^{-1}(\psi_j^2(\xi) \hat{u}(\xi) )$, a straightforward calculation using the boundedness and decay rate of \(l_T \eqsim l_{-\frac{1}{2}}\) for \(|\xi| \gg 1\) shows that $\| \psi_j^2(D) L_T u \|_{L^\infty} \leqslant 2^{-\frac{j}{2}+2} \| \psi_j^2(D) u \|_{L^\infty}$.
	We then have
	\begin{equation*}
	\sup_j \,2^{j(s + \frac{1}{2})}\, \| \psi_j^2(D) L_T u \|_{L^\infty} \lesssim \sup_j \,2^{js}\, \| \psi_j^2(D) u \|_{L^\infty},
	\end{equation*}
which proves the first assertion. Since \(\SM\) is compact it follows that the embedding \(\zygmund^{s+\frac{1}{2}}(\SM) \hookrightarrow \zygmund^{s}(\SM)\) is compact as well, and thus \(L\) is a compact operator on any Zygmund (or H\"older, or \(C^k\)) space defined over \(\SM\).
\end{proof}

\section{One-dimensional bifurcation}\label{sec:onedim}

Since $K\in L^1(\RM)$, it may be periodised to an arbitrary period.  In particular,
given a $2\pi$-periodic $f \in L^\infty(\RM)$ we can define the action of $L_T=K_T\ast$ on $f$ through a convolution of $f$ with a $2\pi$-periodic kernel $K_p$ over a single period:
\begin{align*}
	L_Tf(x) = \int_\RM K_T(x-y) f(y) \diff y &= \int_{-\pi}^{\pi} \left( \sum_{k\in\ZM} K_T(x-y+2k\pi) \right) f(y) \diff y \\
	&=: \int_{-\pi}^{\pi} K_p(x-y) f(y) \diff y.
\end{align*}
Clearly $K_p$ is even, strictly positive on $\RM$ and satisfies $\|K_p\|_{L^1(-\pi,\pi)}=1$.  Further, by Theorem \ref{thm:regularity and decay} we know that
$K_p$ is smooth on $\RM\setminus 2\pi\ZM$,  and that for $T>\frac{4}{\pi^2}$ it follows by Theorem \ref{thm:complete monotonicity} and 
\cite[Proposition 3.2]{Ehrnstrom2016owc} that $K_p$ completely monotone function on the half period $(0,\pi)$.
To find nontrivial solutions of the equation \eqref{e:profile1}, or, equivalently, of \eqref{eq:whitham}, we  fix $s>1/2$ and define a map $F \colon \zygmund^s_\mathrm{even}(\SM)\times\RM\to \zygmund^s_\mathrm{even}(\SM)$
via
\begin{equation}
	\label{e:functional}
	F(u,c)=u-cL_T(u)+L_T(u^2),
\end{equation}
where \(\zygmund^s_\mathrm{even}(\SM)\) is the subspace of even functions in \(\zygmund^s(\SM)\).  Note this map is well-defined
since $\zygmund^s_{\rm even}(\SM)$ is a Banach algebra for any $s>0$.  Then the roots of $F$ correspond to the even and $2\pi$-periodic solutions of \eqref{e:profile1} with wavespeed $c$.  The choice \(s > \frac{1}{2}\) is by convenience, as functions of that regularity have absolutely convergent Fourier series \cite{Katznelson2004ait}.

\begin{figure}[t]\begin{center}
(a)\,\includegraphics[scale=0.44]{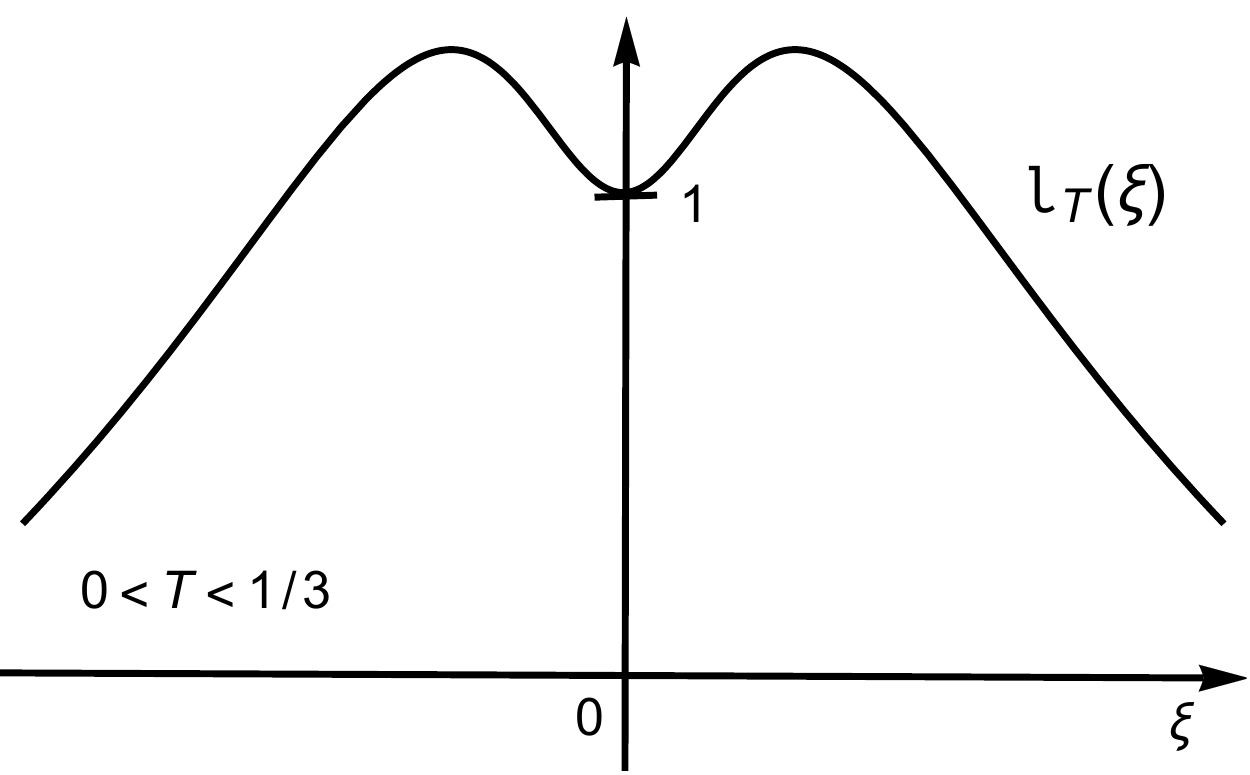}\quad(b)\,\includegraphics[scale=0.44]{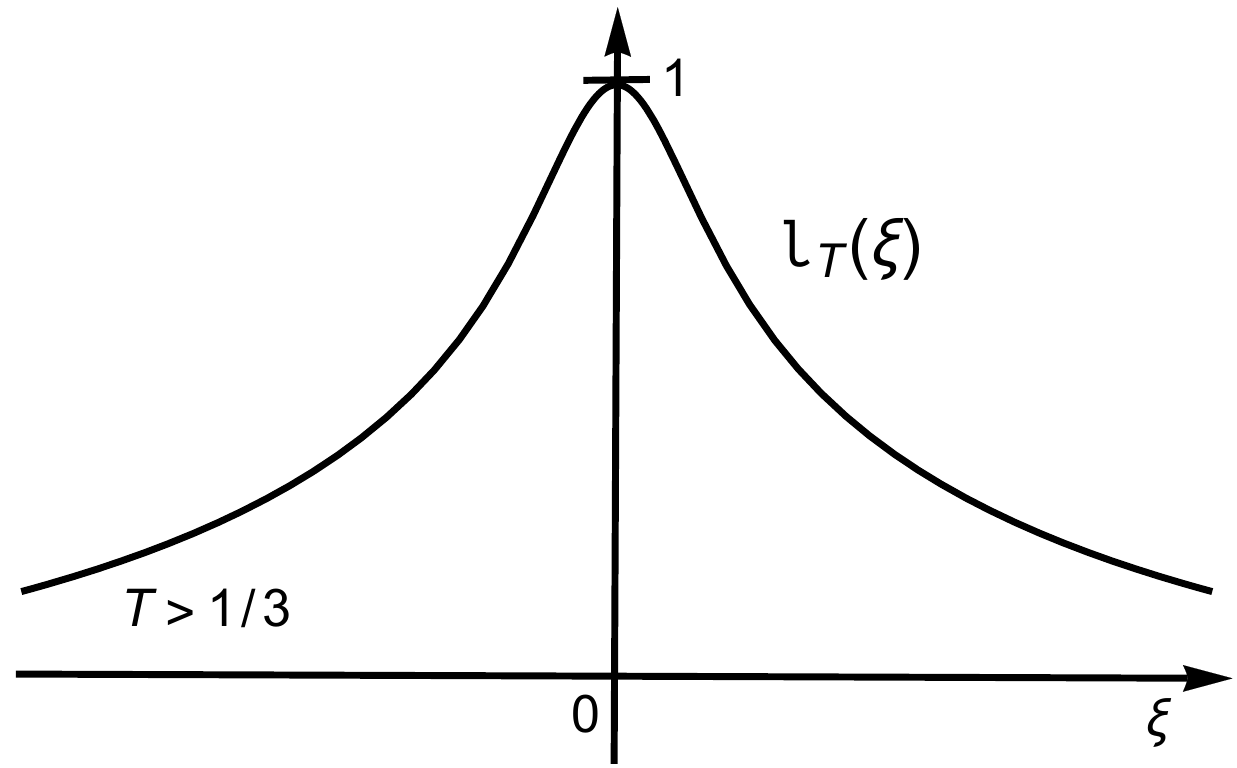}
\caption{Schematic drawings of the behavior of the symbol $l_T(\xi)$ for (a) weak surface tension $0<T<1/3$ and
for (b) strong surface tension $T>1/3$.  In both cases, the symbol is strictly positive and decays  as $|\xi|^{-1/2}$  as $|\xi|\to \infty$.}\label{F:symbol}
\end{center}
\end{figure}

Now, we begin with the observation that $F(0,c)=0$ for all $c\in\RM$ and that the linearised operator
\begin{equation*}
	D_u F[0,c]=\mathrm{Id}-cL_T
\end{equation*}
has a nontrivial kernel in $\zygmund^s_{\rm even}(\SM)$ if and only if $c \, l_T(k) = 1$ for some $k\in\NM_0$ 
(we intentionally include the case \(k=0\) as it will play a role in the two-dimensional bifurcation to come). 
Consequently, for a fixed $c\in\RM$ we have
\begin{equation}\label{eq:nontrivial kernel}
\ker D_u F[0,c] =\mathrm{span}\left\{\cos(kx) \colon k\in\NM_0 \text{ such that } cl_T(k)=1  \right\},
\end{equation}
and hence the multiplicity of the kernel depends sensitively on the graph of the function $l_T(\xi)$.  In particular, if $T>1/3$ then $l_T(\xi)$ is monotone decreasing
on $\RM_+$ and hence the above kernel is simple: see Figure \ref{F:symbol}.  
If $0<T<1/3$, however, the function $l_T$ has exactly one local extremum (a maximum) in the interior of $\RM_+$, 
whence opening the possibility of two different positive integers for which \(l_T(m) = l_T(k)\): again, see Figure \ref{F:symbol}.
A simple calculation shows that for a fixed $k\in\NM_0$, the kernel will be simple if and only if $T\notin\{T_*(n;k)\}_{n\in\NM_0}$, where\footnote{Note that the 
function $T_*(\cdot;\cdot)$ can be extended to the cases $n=0$ and $k=0$ through continuity.}
\[
T_*(n;k):=\frac{n\tanh(k)-k\tanh(n)}{kn\left(n\tanh(n)-k\tanh(k)\right)},
\]
while it will have multiplicity exactly two when  $T = T_*(n;k)$ for some \(n \in \NM_0\).  
Note for each fixed $k$ that the function $T_*(\cdot;k)$ is strictly decreasing
on $\NM_0$ with $T_*(n;k)\to 0$ as $n\to\infty$.  Furthermore, the quantity $\max_{n\in\NM_0}T_*(n;k)=T_*(0;k)$ is a strictly decreasing function of $k$ on $\NM_0$
tending to zero as $k\to\infty$.  

Throughout the remainder of this section, we turn our attention to the branches of solutions \(\{(u,c)\}\) bifurcating from the trivial line $u=0$ at some wavespeed  $c_*$ for a fixed value of the surface tension \(T > 0\) and where $\ker D_u F[0,c_*]$ is one-dimensional; two-dimensional bifurcation in the case \(0 < T < \frac{1}{3}\) is dealt with in Section~\ref{sec:twodim}. Note that while one-dimensional kernels appear both for sub- and supercritical wave speeds, separated by \(c = 1\), two-dimensional kernels only appear for \(c \in (0,1]\): see Section \ref{sec:twodim} below.

\subsection{The parameters}
To investigate the bifurcations we will make use in the following sections of three positive quantities --- the wavespeed $c$, the surface tension $T$, and a scaling in the period of the waves, $\kappa$.
While the first two appear directly in the steady problem \eqref{e:profile1}, the scaling \(\xi \mapsto \kappa \xi\) is realised by introducing the corresponding dependence in the convolution operator \(L\),  so that
\begin{equation}
	\label{eq:operatorScaling}
	\widehat{L_{\kappa,T}}(\xi) = l_{\kappa,T}(\xi) := l_T (\kappa \xi).
\end{equation}
This operator agrees with the original one for $\kappa = 1$. In particular, finding $2\pi$-periodic solutions of \eqref{e:profile1} with symbol $L_{\kappa,T}$ is equivalent to finding
$2\pi/\kappa$-periodic solutions of \eqref{e:profile1} with symbol $L_T=L_{1,T}$.
This allows us to treat different wavelengths in the same equation by moving the wavelength parameter to \(L_{\kappa,T}\).  In what follows, we will thus modify
\eqref{e:functional} and seek non-trivial solutions of the map
\begin{equation}\label{e:functional_rescaled}
F_\kappa(u,c)=u-cL_{\kappa,T}(u)+L_{\kappa,T}\left(u^2\right)
\end{equation}
in $\zygmund^s_{\rm even}(\mathbb{S})\times\RM$ for a fixed $\kappa>0$.

Since surface tension is a property of the medium, while the speed and wavenumber are properties of particular waves, it is physically more relevant to use the two latter as bifurcation parameters, while holding the surface tension fixed. This is what we will do in the following.

\subsection{Local bifurcation via Lyapunov--Schmidt}\label{subsec:local}
The following theorem establishes, for fixed wavelength and surface tension, the local bifurcation of small amplitude steady solutions the capillary-gravity 
Whitham equation \eqref{e:w1}. Although this is by now a standard Crandall--Rabinowitz type result \cite{MR2004250}, we prove the result using a direct Lyapunov--Schmidt reduction as to prepare for the two-dimensional bifurcation in Section~\ref{sec:twodim}. This is similar to the strategy in \cite{eew10}. As \(\kappa\) and \(T\) will be fixed --- assuming that we already have a one-dimensional kernel as described in the beginning of this section --- we shall here suppress the dependence upon these parameters. 

\begin{theorem}\label{thm:1Dlocal}
Let \(k\in\NM\) and set $c_0 = l_{\kappa,T}(k)^{-1}$. For any $T, \kappa >0$ such that $\mathrm{dim} \ker D_u F_\kappa(0, c_0) = 1$ there exists a smooth curve
	\begin{equation*}
		\{ \left( u(t), c(t) \right)  \colon 0 < |t|\ll 1 \}
	\end{equation*}
	of small-amplitude, $2\pi$-periodic even solutions of the steady capillary-gravity Whitham equation \eqref{e:profile1} with symbol given by \eqref{eq:operatorScaling}. These solutions satisfy
	\begin{align*}
		u(t) &= t\cos(kx) + \mathcal{O}(t^2) \\
		c(t) &= c_0 + \mathcal{O}(t).
	\end{align*}
in $\zygmund^s_\mathrm{even}(\SM) \times \RM$, and constitute all nontrivial solutions in a neighbourhood of $(0,c_0)$ in that space.
\end{theorem}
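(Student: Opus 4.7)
The plan is to follow the classical Lyapunov--Schmidt recipe, with the Fredholm structure supplied by Lemma~\ref{lem:compactOperator} and the nonlinearity tamed by the Banach algebra property of $\zygmund^s_{\mathrm{even}}(\SM)$ for $s>1/2$. First I would check that $F_\kappa$ is smooth (in fact polynomial, hence analytic) as a map $\zygmund^s_{\mathrm{even}}(\SM)\times\RM \to \zygmund^s_{\mathrm{even}}(\SM)$. The linearisation $D_u F_\kappa[0,c_0] = \mathrm{Id} - c_0 L_{\kappa,T}$ is then a compact perturbation of the identity, hence Fredholm of index zero. Under the hypothesis that its kernel is one-dimensional, it is spanned by $\varphi(x):=\cos(kx)$; because $L_{\kappa,T}$ is a real, even Fourier multiplier it is self-adjoint in $L^2(\SM)$, so its range has codimension one with orthogonal complement also spanned by $\varphi$.

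Next I would decompose $\zygmund^s_{\mathrm{even}}(\SM) = \spn\{\varphi\} \oplus M$, where $Pu := \pi^{-1}\bigl(\int_{\SM} u\varphi\,\diff x\bigr)\varphi$ is the projection onto the $k$-th Fourier cosine mode and $Q := \mathrm{Id}-P$; both are bounded on $\zygmund^s_{\mathrm{even}}(\SM)$ thanks to absolute Fourier convergence for $s>1/2$. Writing $u = t\varphi + v$ with $v \in M$, the equation $F_\kappa(u,c)=0$ splits into
\begin{align*}
QF_\kappa(t\varphi + v,c) &= 0, \\
PF_\kappa(t\varphi + v,c) &= 0.
\end{align*}
Since $QD_uF_\kappa[0,c_0]\colon M \to QM$ is an isomorphism, the implicit function theorem applied to the first equation yields a unique smooth $v=v(t,c)\in M$ near $(0,c_0)$ with $v(0,c)\equiv 0$; differentiating in $t$ at the origin further gives $\partial_t v(0,c_0)=0$, so that $v(t,c) = \mathcal{O}(t^2)$.

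Substituting this back reduces $PF_\kappa=0$ to a scalar equation $\Phi(t,c)\varphi = 0$ with $\Phi$ smooth. Because $F_\kappa(0,c)\equiv 0$ we have $\Phi(0,c)\equiv 0$, so we may factor $\Phi(t,c) = t\,\tilde\Phi(t,c)$. A short calculation, using self-adjointness of $L_{\kappa,T}$ to kill the $\partial_t v(0,c)$ contribution (it lies in $M$, orthogonal to $\varphi$), gives
\begin{equation*}
\tilde\Phi(0,c) \;=\; \partial_t\Phi(0,c) \;=\; 1 - c\, l_{\kappa,T}(k),
\end{equation*}
so that $\tilde\Phi(0,c_0)=0$ and the transversality condition
\begin{equation*}
\partial_c\tilde\Phi(0,c_0) \;=\; -\,l_{\kappa,T}(k) \;=\; -\frac{1}{c_0} \;\neq\; 0
\end{equation*}
holds. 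A second application of the implicit function theorem produces a unique smooth $c=c(t)$ with $c(0)=c_0$ solving $\tilde\Phi(t,c(t))=0$, and the expansions $u(t) = t\varphi + \mathcal{O}(t^2)$, $c(t) = c_0 + \mathcal{O}(t)$ follow immediately from the smoothness of $v$ and $c$. The uniqueness claim is built into the construction: any nontrivial even solution close to $(0,c_0)$ admits a unique decomposition $u=t\varphi+v$ with $v\in M$ and must therefore come from the reduced scalar equation.

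The only real subtlety I anticipate is the functional-analytic bookkeeping: verifying that $P,Q$ are bounded on $\zygmund^s_{\mathrm{even}}(\SM)$, that $QD_uF_\kappa[0,c_0]$ is genuinely an isomorphism of $M$, and that the transversality computation is unaffected by $\partial_t v(0,c) \neq 0$ for $c\neq c_0$. All three follow transparently from the self-adjoint Fourier-multiplier structure of $L_{\kappa,T}$ together with Lemma~\ref{lem:compactOperator}, after which the argument is genuinely routine; the principal benefit of going through Lyapunov--Schmidt by hand here, rather than invoking Crandall--Rabinowitz, is that the projection framework built above will be reused verbatim for the higher-dimensional kernel in Section~\ref{sec:twodim}.
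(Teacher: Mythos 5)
Your proposal is correct and follows essentially the same Lyapunov--Schmidt strategy as the paper: project onto the one-dimensional kernel spanned by $\cos(kx)$ and its complement, solve the complementary (infinite-dimensional) part via the implicit function theorem, observe that $v$ vanishes to quadratic order in $t$, factor a $t$ out of the reduced bifurcation function, and invoke the implicit function theorem once more using the transversality condition $\partial_c\bigl(1-c\,l_{\kappa,T}(k)\bigr)=-l_{\kappa,T}(k)\neq 0$. The only cosmetic difference is that you parameterise by $(t,c)$ while the paper writes $c=c_0+r$ and uses $(t,r)$; the computations agree (in fact $\partial_t v(0,c)=0$ for all $c$ near $c_0$, since $\mathrm{Id}-cL_{\kappa,T}$ remains invertible on the complement $M$, so your parenthetical concern about that term never arises).
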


\begin{remark}\label{rem:transcritical}
There is an additional but qualitatively different bifurcation taking place at \(c =1\), where the straight curve of constant solutions \((u,c) = (c-1,c)\) crosses the trivial solution curve \((0,c)\). These solutions must be taken into consideration when constructing non-constant waves at \(c=1\) when the kernel is two-dimensional, see Theorem~\ref{thm:2DLocal}. 
\end{remark}

\begin{remark}\label{rem:kappa}
By considering the role of \(\kappa\) in the proof of Theorem~\ref{thm:1Dlocal} one can see that by varying \(\kappa\) one obtains a one-dimensional family of solution curves, the starting points of which depend smoothly on $\kappa$. This may be seen also by applying the implicit function theorem directly to \ref{e:functional}. For each \(k \in \NM\) we thus obtain a two-dimensional sheet of solutions, 
\begin{equation}\label{eq:S^k}
\mathcal{S}^k = \left\{ (u(t,\kappa), c(t,\kappa), \kappa) \colon 0 < |t| \ll 1, |\kappa - \kappa_0| \ll 1\right\} 
\end{equation}
parameterised by \((t,\kappa)\) in a neighbourhood of a bifurcation point \((0,\kappa_0)\).
\end{remark}

\begin{proof}
As stated above, we suppress the dependence on the fixed parameters $T$ and $\kappa$ throughout.
According to the assumptions and the discussion after \eqref{eq:nontrivial kernel}, on $\zygmund^s_{\rm even}(\SM)$ we have   
\[
\kernel D_u F(0,c_0) = \ker (\mathrm{Id} - c_0 L) = \spn\{\cos(k \cdot)\}.
\]	
We first write
	\begin{align*}
		u(t) &= t \cos(k x) + v(t), \\
		c(t) &= c_0 + r(t),
	\end{align*}
	with $v(t) \in \zygmund^s_\mathrm{even}(\SM)$ such that $\int_{-\pi}^\pi \cos(k x) v \diff x = 0$ and $r(t) \in \RM$, and proceed to show the existence of $v$ and $r$ such that for $|t|\ll 1$ we have
	\begin{equation}
		\label{e:F1D=0}
		F(t \cos(k x) + v(t), c_0 + r(t)) = 0.
	\end{equation}
As a subspace of \(L^2(\SM)\), we equip $\zygmund^s_\mathrm{even}(\SM)$ with the $L^2$ inner product \(\langle f, g \rangle = \frac{1}{\pi} \int_{-\pi}^{\pi} fg \, \diff x\)  and let $\Pi\colon \zygmund^s_\mathrm{even}(\SM) \rightarrow \kernel D_u F(0,c_0)$ be the projection onto $\spn \{\cos(k\cdot)\}$ parallel to $\ran(D_u F(0,c_0))$. 
Since  \(D_u F(0,c_0)\) is a symmetric Fredholm operator with index \(0\) by Corollary \ref{cor:fredholm} below, it follows that 
$\zygmund^s_\mathrm{even}(\SM)$ may be decomposed as a direct sum between its kernel and range. In particular, \eqref{e:F1D=0} is equivalent to the system of equations
	\begin{equation}
		\label{e:proj}
		\begin{aligned}
			\Pi F(t \cos(k x) + v, c_0 + r) &= 0, \\
			(I-\Pi) F(t \cos(k x) + v, c_0 + r) &= 0,
		\end{aligned}
	\end{equation}
	where we have suppressed the \(t\)-dependence in \(v\) and \(r\). Noting that
	\begin{align*}
		&F(t \cos(k x) + v, c_0 + r) \\
		&= t \cos(k x) + v - (c_0+r)L (t \cos(k x)+v) + L (t \cos(k x) + v)^2  \\
		&= D_u F(0,c_0) (v +  t\cos(k x))\\ 
		&\quad - r L (t \cos(k x) + v) + L (t \cos(k x) + v)^2,
	\end{align*}
and that \(\cos(k\cdot)\) is in the kernel of \(D_u F(0,c_0)\), the equation \eqref{e:F1D=0} may be rewritten as
	\begin{equation}
		\label{eq:equiv1D}
		D_uF(0,c_0) v =  r L (t \cos(k x) + v) - L (t \cos(k x) + v)^2 =: g(t, r, v)
	\end{equation}
and hence, recalling that $v\in(1-\Pi)\zygmund^s_{\rm even}(\SM)$, \eqref{e:proj} is equivalent to the system
	\begin{equation}
		\label{e:proj2}
		\begin{aligned}
			0 &= \Pi g(t, r, v) \\
	D_uF(0,c_0) v &= (\mathrm{Id} - \Pi)g(t, r, v).
		\end{aligned}
	\end{equation}
Finally, observe  that since $D_uF(0,c_0)$ is invertible on $(I-\Pi) \zygmund^s_\mathrm{even}(\SM)$, the second equation in \eqref{e:proj2} can be rewritten as 
	\begin{equation*}
		v = [D_uF(0,c_0)]^{-1} (\mathrm{Id} -\Pi) g(t, r, v).
	\end{equation*}
Concerning this latter equation, note that at $(t,r) = (0,0)$  we have both that $v=0$ is a solution and that the Frech\`et derivative with respect to $v$ is invertible on $(\mathrm{Id}-\Pi)\, \zygmund^s_\mathrm{even}(\SM)$ (because \(D_uF(0,c_0)\) is). 
Therefore, by the implicit function theorem on Banach spaces, the second line of \eqref{e:proj2} has a unique solution $v(t, r) \in (\mathrm{Id}-\Pi)\, \zygmund^s_\mathrm{even}(\SM)$ defined in a neighbourhood of $(t,r) = (0, 0)$, and depending analytically on its arguments. By uniqueness, $v(0,r)=0$ for all \(|r| \ll 1\). Moreover, differentiation with respect to \(t\) at $(t,r) = (0,0)$ in \eqref{eq:equiv1D} shows that $\frac{\partial}{\partial t}v(0,r) = 0$, which implies that $v$ has no constant or linear terms in $t$. As it is smooth in \(t\), it may be expanded in an (at least) quadratic series around \(t=0\).

We now need to solve the equation
	\begin{equation*}
	\Pi g(t, r, v(t, r)) = Q(r,t) \cos(kx) = 0
	\end{equation*}
	for \(r\), with
	\begin{equation*}
	Q(t,r) := \langle g(t, r, v(t, r)) , \cos(k \cdot) \rangle.
	\end{equation*}
	Notice that that $Q(0,r)=0$ since $v(0,r)=0$ for all \(r\), which together with the symmetry of $L$ implies that we can write
	\begin{equation*}
		 Q(t, r) = t \left[r \, l(k) + R(t,r)\right],
	\end{equation*}
	where $R$ is analytic with $R(0,0) = \partial_r R(0,0) = 0$, again due to the properties of $v$ (here, \(l = l_{T, \kappa}\)). An application of the implicit function theorem to the equation \(r \, l(k) \pi + R(t,r) = 0\) at \((t,r) = (0,0)\) then yields the existence of a locally unique smooth function $r \colon t \mapsto r(t)$ with \(r(0) = 0\) such that 

\[
Q(t, r(t))) = t ( r(t)\, l(k) + \tilde{R}(t, r(t))) = 0
\]
for all \(|t|\ll1\). This concludes the proof.
\end{proof}

\subsection{Global bifurcation (analytic)}\label{s:global}
We now extend the local bifurcation curves from Section~\ref{subsec:local} to global ones by the means of the analytic bifurcation theory 
pioneered by Dancer \cite{Dancer1973btf, Dancer1973gso} and then developed further by Buffoni and Toland \cite{BuffoniToland}. 
For fixed $s>1/2$, we define $N \colon \zygmund^s_{\rm even}(\SM)\times\RM\to \zygmund^{s+1/2}_{\rm even}(\SM)$ by
\[
N(u,c) = L(cu-u^2).
\]
Fixed points of \(N\) are solutions of the steady capillary-gravity Whitham equation \eqref{e:profile1}, and conversely.  Let 
\[
S =\left\{(u,c)\in \zygmund^s_{\rm even}(\SM)\times\RM \colon F(u,c)=0\right\}
\] 
be the set of solutions (fixed points of \(N\)). 
Note that Lemma~\ref{lem:compactOperator} implies that \(S \subset \zygmund^\infty_{\rm even} \times \RM\), so that all solutions are smooth: for 
details, see Proposition~\ref{prop:smooth} below. 
By combining this with a diagonal argument one obtains the following compactness result.

\begin{lemma}\label{lem:compactness}
Bounded and closed sets in $S$ are compact in $\zygmund^s_{\rm even}(\SM)\times\RM$.
\end{lemma}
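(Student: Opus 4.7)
The plan is to exploit the half-derivative smoothing of \(L_T\) together with the fixed-point identity \(u = N(u,c)\) to gain regularity along any bounded sequence of solutions, and then to use compactness of the embedding between Zygmund spaces on the compact manifold \(\SM\). Let \(A \subset S\) be bounded and closed, and let \(\{(u_n, c_n)\} \subset A\) be a sequence, so that \(\|u_n\|_{\zygmund^s_{\rm even}(\SM)} \leq M\) and \(|c_n| \leq M\) for some \(M > 0\).

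First I would extract a subsequence (not relabelled) along which \(c_n \to c_* \in \RM\) by Bolzano--Weierstrass. For the component \(u_n\), I use the fixed-point identity
\begin{equation*}
u_n = N(u_n, c_n) = L_T(c_n u_n - u_n^2).
\end{equation*}
Because \(s > \tfrac{1}{2}\), the space \(\zygmund^s_{\rm even}(\SM)\) is a Banach algebra, so \(\{c_n u_n - u_n^2\}\) is bounded in \(\zygmund^s_{\rm even}(\SM)\) (by \(M^2 + M\) up to constants). Applying Lemma~\ref{lem:compactOperator} yields
\begin{equation*}
\|u_n\|_{\zygmund^{s+1/2}_{\rm even}(\SM)} = \|L_T(c_n u_n - u_n^2)\|_{\zygmund^{s+1/2}_{\rm even}(\SM)} \lesssim \|c_n u_n - u_n^2\|_{\zygmund^s_{\rm even}(\SM)},
\end{equation*}
so \(\{u_n\}\) is in fact bounded in \(\zygmund^{s+1/2}_{\rm even}(\SM)\). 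Since \(\SM\) is compact the embedding \(\zygmund^{s+1/2}(\SM) \hookrightarrow \zygmund^{s}(\SM)\) is compact (this is the same observation used at the end of Lemma~\ref{lem:compactOperator}), and a further subsequence converges \(u_n \to u_*\) in \(\zygmund^s_{\rm even}(\SM)\).

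By continuity of \(F\) on \(\zygmund^s_{\rm even}(\SM) \times \RM\), the limit satisfies \(F(u_*, c_*) = 0\), hence \((u_*, c_*) \in S\); and since \(A\) was assumed closed the limit actually belongs to \(A\), giving compactness. The role of the diagonal argument mentioned in the excerpt is then to iterate the bootstrap \(\zygmund^s \to \zygmund^{s+1/2} \to \zygmund^{s+1} \to \cdots\) (each step using the same identity together with the Banach algebra property) so as to extract a single subsequence that converges in every \(\zygmund^{s+k/2}\), \(k \in \NM\), consistent with the smoothness of solutions asserted in Proposition~\ref{prop:smooth}. I do not anticipate a serious obstacle: the whole argument is driven by the half-derivative smoothing of \(L_T\), the algebra property for \(s > \tfrac{1}{2}\) (which closes the nonlinear bootstrap), and the continuity of \(F\); the only mildly delicate point is keeping track that the same subsequence works at every regularity level, which is precisely what the diagonal extraction takes care of.
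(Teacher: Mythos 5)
Your argument is correct and essentially identical to the paper's: extract a convergent subsequence of $c_n$, use the Banach algebra property and the compactness/smoothing of $L_T$ (via Lemma~\ref{lem:compactOperator}) on the fixed-point identity $u_n = L_T(c_n u_n - u_n^2)$ to obtain a subsequence of $u_n$ converging in $\zygmund^s_{\rm even}(\SM)$, and pass to the limit using continuity of $F$ and closedness of the set. The closing remark about the diagonal argument is not actually needed for the compactness claim as stated — a single half-derivative bootstrap already yields convergence in $\zygmund^s$ — and indeed the paper's own proof does not use it.
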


\begin{proof}
Let $K\subset S\subset \zygmund^s_{\rm even}(\SM)\times\RM$ be closed and bounded, and pick a sequence $(u_j, c_j)_j \subset K$. 
Since $\{c \in \RM \colon (u,c)\in K\}$ is a closed and bounded subset of $\RM$, it is compact. This means that $(c_j)_j$ has a convergent subsequence, name it $(c_k)_k$.
As the map 
\[
\zygmund^s_{\rm even}(\mathbb{S})\times\RM\ni(u,c)\mapsto cu-u^2\in\zygmund_{\rm even}^s(\SM)
\]
is continuous for $s>1/2$, and since the map $L$ is compact on $\zygmund^s_{\rm even}(\SM)$ thanks to Lemma \ref{lem:compactOperator},
it follows that after passing to a further subsequence $(u_l,c_l)_l\subset K$ that $(N(u_l,c_l))_l$  converges in $\zygmund^s_{\rm even}(\SM)$ to some function $u$.
Since $u_l=N(u_l,c_l)$ by definition, passing to limits implies the sequence $(u_l,c_l)_l$ converges in $\zygmund^s_{\rm even}(\SM)\times\RM$ with
limit $(u,c)\in S$.  As $K$ is closed it follows that $(u,c)\in K$, establishing that $K$ is compact.
\end{proof}

\begin{corollary}\label{cor:fredholm}
The Frech\`et derivative $D_u F(u,c)$ is a Fredholm operator of index $0$ at any point $(u,c) \in C^s_\mathrm{even}(\SM)\times \RM$. 
\end{corollary}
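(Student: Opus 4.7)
The plan is to identify $D_u F(u,c)$ as a compact perturbation of the identity and then invoke standard Riesz--Schauder theory.

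First I would compute the Fréchet derivative directly from the definition \eqref{e:functional}. Since \(L_T\) is linear and \(u \mapsto u^2\) has derivative \(v \mapsto 2uv\), we obtain
\begin{equation*}
D_u F(u,c)\,v \;=\; v - c L_T v + 2 L_T(uv) \;=\; v - L_T\bigl((c-2u)v\bigr).
\end{equation*}
So the task reduces to showing that the operator \(v \mapsto L_T((c-2u)v)\) is compact on \(\zygmund^s_{\rm even}(\SM)\).

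Next I would handle the multiplication factor. Because \(s > 1/2\), the space \(\zygmund^s_{\rm even}(\SM)\) is a Banach algebra, so multiplication by the fixed even function \(c - 2u \in \zygmund^s_{\rm even}(\SM)\) is a bounded linear operator from \(\zygmund^s_{\rm even}(\SM)\) to itself. Composing with \(L_T\), Lemma~\ref{lem:compactOperator} gives that \(v \mapsto L_T((c-2u)v)\) is bounded from \(\zygmund^s_{\rm even}(\SM)\) into \(\zygmund^{s+1/2}_{\rm even}(\SM)\). Since \(\SM\) is compact, the embedding \(\zygmund^{s+1/2}_{\rm even}(\SM) \hookrightarrow \zygmund^s_{\rm even}(\SM)\) is compact (as already used in the proof of Lemma~\ref{lem:compactOperator}), so the composition is a compact operator on \(\zygmund^s_{\rm even}(\SM)\).

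Finally I would conclude by appealing to the Riesz--Schauder theorem: the identity minus a compact operator on a Banach space is always Fredholm of index zero. Applied to \(D_u F(u,c) = \mathrm{Id} - L_T \circ (c-2u)\,\cdot\), this immediately yields the claim. There is no real obstacle here; the work is entirely absorbed into Lemma~\ref{lem:compactOperator} and the algebra property of \(\zygmund^s\) for \(s>1/2\). The only point worth flagging is that one must verify the multiplication operator preserves the even subspace, which is automatic because \(u\) (and hence \(c-2u\)) is even.
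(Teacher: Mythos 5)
Your proof is correct and takes essentially the same approach as the paper: both identify $D_u F(u,c) = \mathrm{Id} - L_T\bigl((c-2u)\,\cdot\bigr)$ as a compact perturbation of the identity and conclude by Riesz--Schauder. If anything your citation is more apt — the compactness of the perturbation follows directly from Lemma~\ref{lem:compactOperator} together with the Banach-algebra property, whereas the paper's proof cites Lemma~\ref{lem:compactness} (the closed-bounded-sets-in-$S$ statement), which really relies on the same operator-compactness ingredient rather than implying it.
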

\begin{proof}
This follows immediately from Lemma~\ref{lem:compactness} as then
	\begin{equation*}
		D_u F(u,c) = \mathrm{Id}-L(c - 2 u)
	\end{equation*}
	is a compact perturbation of the identity.
\end{proof}

\begin{theorem}\label{thm:1Dglobal}
Whenever
\begin{align}\label{eq:global condition}
\frac{3c_0 l(2k) - l(2k) - 2}{(c_0-1) (c_0 l(2k) - 1)}
\end{align}
is finite and non-vanishing the local bifurcation curve $t \mapsto \left( u(t), c(t) \right)$, \(|t| \ll 1\), from Lemma~\ref{thm:1Dlocal} extends to a continuous and locally analytically 
re-parameterisable curve in $\zygmund^s_{\rm even}(\SM)\times\RM$ defined for all $t \in [0, \infty)$. One of the following alternatives holds:
	\begin{enumerate}
		\item[(i)] $\|(u(t), c(t)) \|_{\zygmund^s(\SM) \times \RM} \rightarrow \infty$ as $t \rightarrow \infty$.
		\item[(ii)] $t \mapsto \left( u(t), c(t) \right)$ is $P$-periodic for some finite \(P\), so that the curve forms a loop.
	\end{enumerate}
\end{theorem}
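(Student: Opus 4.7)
The plan is to recast the problem as a fixed-point equation \(u = N(u,c)\) with \(N(u,c) = L_{\kappa,T}(cu - u^2)\) and apply the analytic global bifurcation theory of Dancer and Buffoni--Toland \cite{Dancer1973btf, Dancer1973gso, BuffoniToland}. The framework requires three ingredients: (a) real-analyticity of \(F_\kappa : \zygmund^s_{\rm even}(\SM) \times \RM \to \zygmund^s_{\rm even}(\SM)\); (b) that \(D_u F_\kappa\) be Fredholm of index zero everywhere along the solution set \(S\) and that bounded closed subsets of \(S\) be compact in \(\zygmund^s_{\rm even}(\SM)\times \RM\); and (c) that the initial bifurcation point \((0, c_0)\) is \emph{simple} in the sense of Dancer --- meaning a one-dimensional kernel crossed transversally by the curve furnished in Theorem~\ref{thm:1Dlocal}.

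Ingredient (a) is immediate, since \(F_\kappa\) is a degree-two polynomial in \(u\) and affine in \(c\), with \(L_{\kappa,T}\) a bounded linear operator on \(\zygmund^s_{\rm even}(\SM)\) by Lemma~\ref{lem:compactOperator}. Ingredient (b) has already been prepared for us: Lemma~\ref{lem:compactness} gives the compactness of bounded closed subsets of \(S\), and Corollary~\ref{cor:fredholm} shows that \(D_u F_\kappa(u,c) = \mathrm{Id} - L_{\kappa,T}(c - 2u)\) is a compact perturbation of the identity, hence Fredholm of index zero at every point in the relevant spaces.

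The analytical heart of the argument is (c), which is where the expression in \eqref{eq:global condition} enters. The denominator encodes the absence of secondary kernel elements at \((0,c_0)\): the factor \(c_0 - 1\) being nonzero excludes constants from \(\ker D_u F_\kappa(0,c_0)\) (recall \(l_{\kappa,T}(0) = 1\)), while the factor \(c_0 l_{\kappa,T}(2k) - 1\) being nonzero excludes the second harmonic \(\cos(2kx)\). These are precisely the invertibility conditions required in order to solve uniquely for the quadratic corrector \(u_2(x)\) in the Lyapunov--Schmidt expansion at the \(t^2\)-order, since the quadratic nonlinearity generates harmonics of the form \(\cos^2(kx) = \tfrac{1}{2}(1 + \cos(2kx))\). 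The numerator \(3c_0 l_{\kappa,T}(2k) - l_{\kappa,T}(2k) - 2\) is, up to a nonzero multiplicative constant, the bifurcation coefficient governing the leading-order crossing of the eigenvalue \(1/l_{\kappa,T}(k)\) along the local curve; its explicit computation from the Lyapunov--Schmidt reduction is recorded in Appendix~\ref{app:A}, and its non-vanishing gives the required transversality.

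With (a)--(c) established, the Buffoni--Toland theorem \cite{BuffoniToland} yields a continuous, locally analytically reparameterisable extension \(t \mapsto (u(t), c(t))\) of the local branch, defined for all \(t \in [0, \infty)\), satisfying one of three alternatives: norm blow-up, closure into a loop, or approach to the boundary of the domain on which \(F_\kappa\) is analytic. The third alternative is vacuous in our setting because \(F_\kappa\) is entire on the whole of \(\zygmund^s_{\rm even}(\SM) \times \RM\), leaving exactly (i) and (ii) of the statement. The main obstacle I anticipate is the bookkeeping in the transversality computation: tracking the projections in the Lyapunov--Schmidt reduction carefully enough to identify the abstract bifurcation coefficient with the explicit algebraic form of \eqref{eq:global condition}, accounting for both the quadratic self-interaction of \(\cos(kx)\) and the fact that \(L_{\kappa,T}\) is not the identity on the range complement of the kernel.
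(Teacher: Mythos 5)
Your proposal matches the paper's own strategy: invoke the Buffoni--Toland analytic global bifurcation theorem, verify analyticity of $F_\kappa$, use Lemma~\ref{lem:compactness} and Corollary~\ref{cor:fredholm} for the Fredholm and compactness hypotheses, and rule out the third alternative (hitting the boundary of the domain of analyticity) because $\zygmund^s_{\rm even}(\SM)\times\RM$ has no boundary. Your reading of the denominator of \eqref{eq:global condition} is also right: $c_0\neq 1$ and $c_0 l(2k)\neq 1$ are exactly the conditions guaranteeing a one-dimensional kernel, and they are the invertibility conditions used to solve for the quadratic corrector $\ddot v(0)$ in the Lyapunov--Schmidt reduction (they are in fact automatic under the standing hypothesis of Theorem~\ref{thm:1Dlocal}, so the ``finite'' part of the condition is not an additional restriction).

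Where you are imprecise is in describing the numerator as a ``transversality'' condition. The Crandall--Rabinowitz transversality, $\partial_c D_u F[0,c_0]\cos(k\cdot)\notin\ran D_u F[0,c_0]$, holds automatically here because $L\cos(k\cdot)=l(k)\cos(k\cdot)$ with $l(k)>0$; it is not encoded in \eqref{eq:global condition}. What \eqref{eq:global condition} actually \emph{is} is the value of $\ddot c(0)$ along the local branch (computed in Appendix~\ref{app:A}, after first establishing $\dot c(0)=0$), and the hypothesis of the theorem is the non-degeneracy condition $\ddot c(0)\neq 0$. This is a genuine extra hypothesis of the version of Buffoni--Toland being invoked (as in the gravity case in \cite{Ehrnstrom2016owc}): one needs some derivative $c^{(j)}(0)$ to be non-zero so that the local analytic branch is a non-degenerate pitchfork and furnishes a proper distinguished arc; since $\dot c(0)=0$, the lowest-order candidate is $\ddot c(0)$. (Indeed, as Remark~\ref{rem:cdotdot} notes, if $\ddot c(0)=0$ one could instead try $c^{(4)}(0)$.) Your phrase ``bifurcation coefficient governing the leading-order crossing of the eigenvalue'' gestures at this, but conflates eigenvalue crossing with the non-degeneracy of the branch; it is worth stating plainly that \eqref{eq:global condition} is $\ddot c(0)$, that $\dot c(0)=0$, and that its non-vanishing is the hypothesis for the global theorem rather than a transversality condition.
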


\begin{remark}\label{rem:cdotdot}
We note that
\[
\ddot{c}(0;k)=\left\{\begin{aligned}
									&\frac{10}{(3T-1)k^2}+\mathcal{O}(1)\quad{\rm for}~|k|\ll 1\\
									&-(\sqrt{2}-1)(Tk)^{-1/2}+\mathcal{O}\left(k^{-1}\right)\quad{\rm for} ~k\gg 1.
									\end{aligned}\right.
\]
For $T>1/3$ it follows that $(0,c_0)$ undergoes a supercritical pitchform bifurcation for small $k$, and a subcritical pitchfork bifurcation for large $k$.
Note numerically, we observe there exists a unique $k_*=k_*(T)>0$ such that $\ddot{c}(0)>0$ for $0<k<k_*$ and $\ddot{c}(0)<0$ for $k>k_*$.
For $0<T<1/3$, both the numerator and denomenator of \eqref{eq:global condition} change signs.
Note that one may be able to do global bifurcation when $\ddot{c}(0)=0$ but inspecting $c^{(4)}(0)$: see, for example, \cite[Theorem 6.1]{Ehrnstrom2016owc}.
We do not pursue this here.
\end{remark}

\begin{proof}
This theorem is a version of the global analytic bifurcation theorem in \cite{BuffoniToland}, and --- apart from the bifurcation formulas --- the proof goes as in the purely gravitation case in \cite{Ehrnstrom2013gbf, Ehrnstrom2016owc}. The assumptions are fulfilled from Lemma~\ref{lem:compactness} and Corollary~\ref{cor:fredholm} if one can just show that some derivative \(c^{(k)}(0)\) is non-vanishing. We give the calculations for \(\dot{c}(0)\) and \(\ddot{c}(0)\) in the Appendix; the first is \(0\), and the second is given by \eqref{eq:global condition}.  Note that a third alternative in the theorem in \cite{BuffoniToland} does not happen here, as the set \(\zygmund^s_{\rm even}(\SM) \times \RM\) lacks a boundary.
\end{proof}

There are a few more things one can say about the global bifurcation curves, both numerically and analytically, and we discuss the global bifurcation diagram in detail in Section~\ref{sec:discussion}. In particular, the cases of strong and weak surface tension are summarised in Figures~\ref{fig:strong} and~\ref{fig:weak}, respectively.

\section{Two-dimensional local bifurcation}\label{sec:twodim}
We now focus our attention on the case of a two-dimensional bifurcation kernel in \(\zygmund^s_\text{even}(\SM)\). To enable the necessary two degrees of freedom we shall make use of the wavelength \(\kappa\) in addition to the wavespeed \(c\), while the surface tension \(T\) is assumed to be fixed. We shall therefore study for $\kappa>0$ the operator
\[
F_{\kappa}(u,c) = u + L_{\kappa}(u^2 - c u)
\]
on $\zygmund^s_{\rm even}(\SM)\times\RM$, along with its linearisation
\[
\Lop = D_u F_{\kappa_0}(0, c_0) = \mathrm{Id} - c_0 L_{\kappa_0},
\]
assuming that \(T, \kappa_0, c_0 > 0\) are constants such that
\begin{equation}\label{eq:twodim kernel}
\ker(\Lop) = \mathrm{span}\{\cos(k_1 \cdot), \cos(k_2 \cdot)\},
\end{equation}
which happens when $\kappa_0, c_0 > 0$ and $k_1, k_2 \in \NM_0$, $k_1 \ne k_2$, are such that 
\[
c_0 = l_{\kappa_0}(k_1)^{-1} = l_{\kappa_0}(k_2)^{-1}, 
\]
as described at the start of Section~\ref{sec:onedim} (we suppress the dependence on \(T\), as it will not be used apart from in this assumption). A two-dimensional kernel can arise only for \(c_0 \in (0,1]\). Let now \(1 \leq k_1 \leq k_2\). With $\mathcal{S}^k$ being the sheet of $2\pi/k$-periodic solutions defined in \eqref{eq:S^k} we shall show that in addition to the solutions in $\mathcal{S}^{k_1}$ and $\mathcal{S}^{k_2}$, we may obtain solutions in a set called $\mathcal{S}^{mixed}$ consisting of perturbations of functions in the span of \(\cos(k_1 \cdot)\) and \(\cos(k_2 \cdot)\).
Assuming that \(k_1 \leq k_2\), the resonant case when $k_2$ is an integer multiple of $k_1$ (sometimes referred to as Wilton ripples) is more difficult than the generic case, but we follow here the procedure in \cite{eew10, Ehrnstrom2015tsw} to construct a slit disk of solutions also in that case. Numerical calculations indicate that this set is optimal \cite{Remonato2017nbf}. 

When one of the wavenumbers is zero (meaning \(c_0 = 1\)), we instead call that one \(k_2\), and we will automatically have the resonant case, as then \(k_1 \mid k_2\). That case is included in the below theorem. Hence, at \(c=1\) there is a nontrivial bifurcation, but the arising waves always have a non-zero component in the constant direction.

\begin{theorem}\label{thm:2DLocal}
Let $T>0$ be fixed and assume that \eqref{eq:twodim kernel} holds for some distinct $k_1,k_2\in\NM_0$.
	\begin{enumerate}
		\item[(i)] When $k_1$ does not divide $k_2$ there is a full, smooth, sheet 
			\begin{equation*}
				\mathcal{S}^{mixed} = \{ \left(u(t_1, t_2), c(t_1, t_2), \kappa(t_1, t_2)\right) \colon 0 < |(t_1, t_2)|\ll 1 \}
			\end{equation*}
			of solutions in $\zygmund^s_{\rm even}(\SM)\times\RM\times\RM_+$ of the form
			\begin{align*}
				u(t_1, t_2) &= t_1 \cos(k_1 x) + t_2 \cos(k_2 x) + \mathcal{O}(|(t_1, t_2)|^2), \\
				c(t_1, t_2) &= c_0 + \mathcal{O}((t_1, t_2)), \\
				\kappa(t_1, t_2) &= \kappa_0 + \mathcal{O}((t_1, t_2)),
			\end{align*}
to the steady capillary-gravity Whitham equation \eqref{e:profile1}. The set $\mathcal{S}^{k_1} \cup \mathcal{S}^{k_2} \cup \mathcal{S}^{mixed}$ contains all 
nontrivial solutions in $\zygmund^s_{\rm even}(\SM)\times\RM\times\RM_+$ of this equation in a neighbourhood of $(0, c_0, \kappa_0)$.\\

		\item[(ii)] When $k_1$ divides \(k_2\) there exists for any \(\delta > 0\) a small but positive \(\varepsilon_\delta\) and a slit, smooth, sheet 
		\begin{equation*}
		\mathcal{S}^{mixed}_\delta = \{ \left(u(\varrho, \vartheta), c(\varrho, \vartheta), \kappa(\varrho, \vartheta)\right) \colon 0<\varrho<\varepsilon_\delta,\, \delta < |\vartheta| < \pi - \delta  \}
		\end{equation*}
of solutions in $\zygmund^s_{\rm even}(\SM)\times\RM\times\RM_+$ of the form
		\begin{align*}
		 u(\varrho, \vartheta) &= \varrho\cos(\vartheta) \cos(k_1 x) + \varrho\sin(\vartheta) \cos(k_2 x) + \mathcal{O}(\varrho^2), \\
		c(\varrho, \vartheta) &= c_0 + \mathcal{O}(\varrho), \\
		\kappa(\varrho, \vartheta) &= \kappa_0 + \mathcal{O}(\varrho).
		\end{align*}
to the steady capillary-gravity Whitham equation \eqref{e:profile1}.  In a neighbourhood of $(0, c_0, \kappa_0)$,
the set $\mathcal{S} = \mathcal{S}^{k_2} \cup \mathcal{S}^{mixed}_\delta$ contains all nontrivial solutions in $\zygmund^s_{\rm even}(\SM)\times\RM\times\RM_+$
of \eqref{e:profile1} such that $\delta < |\vartheta| < \pi-\delta$.
	\end{enumerate}
\end{theorem}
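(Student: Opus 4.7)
The plan is to mimic the Lyapunov--Schmidt argument of Theorem~\ref{thm:1Dlocal}, but now with two bifurcation parameters $(c,\kappa)=(c_0+r_1,\kappa_0+r_2)$ to match the two-dimensional kernel. I would let $\Pi$ denote the orthogonal $L^2$-projection of $\zygmund^s_\mathrm{even}(\SM)$ onto $\kernel\Lop=\spn\{\cos(k_1\cdot),\cos(k_2\cdot)\}$, parallel to $\ran(\Lop)$ (using that $\Lop$ is symmetric and Fredholm of index $0$ by Corollary~\ref{cor:fredholm}). With the ansatz $u=t_1\cos(k_1 x)+t_2\cos(k_2 x)+v$, $v\in(\mathrm{Id}-\Pi)\zygmund^s_\mathrm{even}(\SM)$, the same manipulation as in the 1D case rewrites the range equation as
\[
(\mathrm{Id}-(c_0+r_1)L_{\kappa_0+r_2})\,v=-(\mathrm{Id}-\Pi)L_{\kappa_0+r_2}(u^2),
\]
whose left-hand operator is a small perturbation of $\Lop$ and thus invertible on $(\mathrm{Id}-\Pi)\zygmund^s_\mathrm{even}(\SM)$. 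An implicit function theorem argument as in Theorem~\ref{thm:1Dlocal} then yields a locally unique analytic $v=v(t_1,t_2,r_1,r_2)$ vanishing whenever $t_1=t_2=0$, and of order $\mathcal{O}(|t|^2)$.

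The reduced (kernel) equations are $Q_i(t_1,t_2,r_1,r_2):=\langle F_{\kappa_0+r_2}(u,c_0+r_1),\cos(k_i\cdot)\rangle=0$, $i=1,2$, which expand as
\[
Q_i=t_i\bigl(1-(c_0+r_1)\,l_{\kappa_0+r_2}(k_i)\bigr)+l_{\kappa_0+r_2}(k_i)\,[u^2]_{k_i},
\]
with $[\cdot]_k$ the $\cos(k\cdot)$-Fourier coefficient. A structural observation I would exploit is that when $t_1=0$ the solution $v(0,t_2,r_1,r_2)$ has Fourier support contained in $\{mk_2:m\in\NM_0\setminus\{1\}\}$: the quadratic forcing at $t_1=0$ only excites multiples of $k_2$, the range projection kills the $k_2$-mode itself, and $\Lop^{-1}$ preserves Fourier support, so uniqueness of $v$ forces the claim. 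Since $0<k_1<k_2$, the frequency $k_1$ lies outside that support, hence $[u^2]_{k_1}|_{t_1=0}=0$, giving the clean factorisation $Q_1=t_1\tilde Q_1$ with $\tilde Q_1$ analytic. Symmetrically, $Q_2$ factors as $t_2\tilde Q_2$ precisely in case~(i); in the resonant case (ii) the nonlinear cascade $k_1\to 2k_1\to\cdots\to k_2$ generates an irreducible $t_1^{k_2/k_1}$ contribution to $Q_2|_{t_2=0}$, visible already at order two when $k_2=2k_1$ via $\cos^2(k_1x)=\tfrac12(1+\cos(k_2x))$.

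In the non-resonant case (i) both equations factor and I would solve $\tilde Q_1=\tilde Q_2=0$ for $(r_1,r_2)$ by a direct application of the implicit function theorem at the origin. Using $c_0l_{\kappa_0}(k_i)=1$, the Jacobian determinant at this point comes out to $k_2\,l_T'(\kappa_0 k_2)-k_1\,l_T'(\kappa_0 k_1)$, which is non-zero because $l_T$ is strictly monotone outside its single interior maximum on $\RM_+$ and takes equal values $c_0^{-1}$ at the distinct points $\kappa_0k_1\neq\kappa_0k_2$, forcing $l_T'$ to carry opposite signs there. The implicit function theorem then supplies analytic $(r_1,r_2)=(R_1(t_1,t_2),R_2(t_1,t_2))$ with $R_i(0,0)=0$, giving $\mathcal{S}^\mathrm{mixed}$. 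Its restrictions to the two coordinate axes recover $\mathcal{S}^{k_1}$ and $\mathcal{S}^{k_2}$ from Remark~\ref{rem:kappa}, and the uniqueness in the implicit function theorem closes the completeness assertion.

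The main obstacle is the resonant case (ii), where the failure of $Q_2$ to factor forces a more delicate parameterisation. Following~\cite{eew10,Ehrnstrom2015tsw}, I would pass to polar coordinates $(t_1,t_2)=(\varrho\cos\vartheta,\varrho\sin\vartheta)$. Since $Q_1=\varrho\cos\vartheta\,\tilde Q_1$ and $Q_2$ still vanishes at $\varrho=0$, a division argument rewrites the system as $\tilde Q_1=0$ and $\hat Q_2:=Q_2/\varrho=0$, both analytic in $(\varrho,\vartheta,r_1,r_2)$. Both vanish at $(\varrho,r_1,r_2)=0$, and a direct computation (again using $c_0l_{\kappa_0}(k_i)=1$) yields
\[
\det\!\begin{pmatrix}\partial_{r_1}\tilde Q_1 & \partial_{r_2}\tilde Q_1\\\partial_{r_1}\hat Q_2 & \partial_{r_2}\hat Q_2\end{pmatrix}\bigg|_{\varrho=0,r=0}=\sin\vartheta\,\bigl(k_2\,l_T'(\kappa_0 k_2)-k_1\,l_T'(\kappa_0 k_1)\bigr),
\]
which is bounded away from zero on the arc $\delta<|\vartheta|<\pi-\delta$. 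A uniform application of the implicit function theorem there will produce analytic $(r_1,r_2)=(R_1(\varrho,\vartheta),R_2(\varrho,\vartheta))$ on a slit disk of radius $\varepsilon_\delta$, realising $\mathcal{S}^\mathrm{mixed}_\delta$. Along $\vartheta=\pm\pi/2$ the amplitude $t_1$ vanishes, so $Q_1\equiv 0$ and $\hat Q_2=0$ reduces to the 1D bifurcation along $\mathcal{S}^{k_2}$, which is how that sheet appears in the completeness statement. The excluded neighbourhoods of $\vartheta=0,\pm\pi$ (the pure $k_1$-direction) are precisely those where the Jacobian degenerates; solutions there belong to the separate sheet $\mathcal{S}^{k_1}$, or, when $k_1=0$, to the explicit constant curve of Remark~\ref{rem:transcritical}, which is why completeness is asserted only away from this slit.
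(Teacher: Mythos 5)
Your proposal is correct and follows essentially the same Lyapunov--Schmidt route as the paper: decompose via the $L^2$-orthogonal projection onto the two-dimensional kernel, solve the range equation by the implicit function theorem to get $v(t_1,t_2,r,p)=\mathcal{O}(|t|^2)$, reduce to the two scalar equations $Q_1=Q_2=0$, factor $Q_1=t_1\Psi_1$ (both cases) and $Q_2=t_2\Psi_2$ (non-resonant case only), and close by an IFT on $(r,p)$ whose Jacobian is nondegenerate because $l_T'$ takes opposite signs at $\kappa_0 k_1\neq\kappa_0 k_2$; in the resonant case pass to polar coordinates, divide $Q_2$ by $\varrho$, and invoke the IFT uniformly for $\delta<|\vartheta|<\pi-\delta$. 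The one cosmetic deviation is your Fourier-support argument for why $[u^2]_{k_1}|_{t_1=0}$ vanishes, where the paper instead appeals directly to $v(0,t_2,\cdot,\cdot)$ being $2\pi/k_2$-periodic by IFT uniqueness; these are two phrasings of the same fact. Your Jacobian formulas drop the harmless nonzero prefactor $c_0\,l_{\kappa_0}(k_1)$ appearing in the paper's \eqref{e:2DDet} and \eqref{e:2DDetPolar}, and your ``$0<k_1<k_2$'' tacitly excludes the allowed case $k_2=0$ (which the paper folds into the resonant alternative), but both are minor and do not affect the argument.
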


\begin{remark}
The order of vanishing of the functions $c-c_0$ and $\kappa-\kappa_0$ in Theorem \ref{thm:2DLocal} is analyzed in Section \ref{S:2d_der_calc} of Appendix~\ref{app:A}.  
\end{remark}

\begin{remark}
The bifurcation theorem Theorem \ref{thm:2DLocal} shows that near a two-dimensional bifurcation point in the case where $k_2/k_1 \notin \NM_0$ there exists a full disk of solutions (for fixed $\kappa$), while if $k_2/k_1 \in \NM_0$ the disk is slit with one axis removed. This situation is summarised in Figure \ref{fig:solutionDisk}. In particular this means that it is possible to find curves connecting solutions with different wavenumbers, consistent with the recent numerical findings in \cite{Remonato2017nbf}.
\end{remark}

\begin{figure}[t]
	\centering
	
	\includegraphics[width=0.7\linewidth]{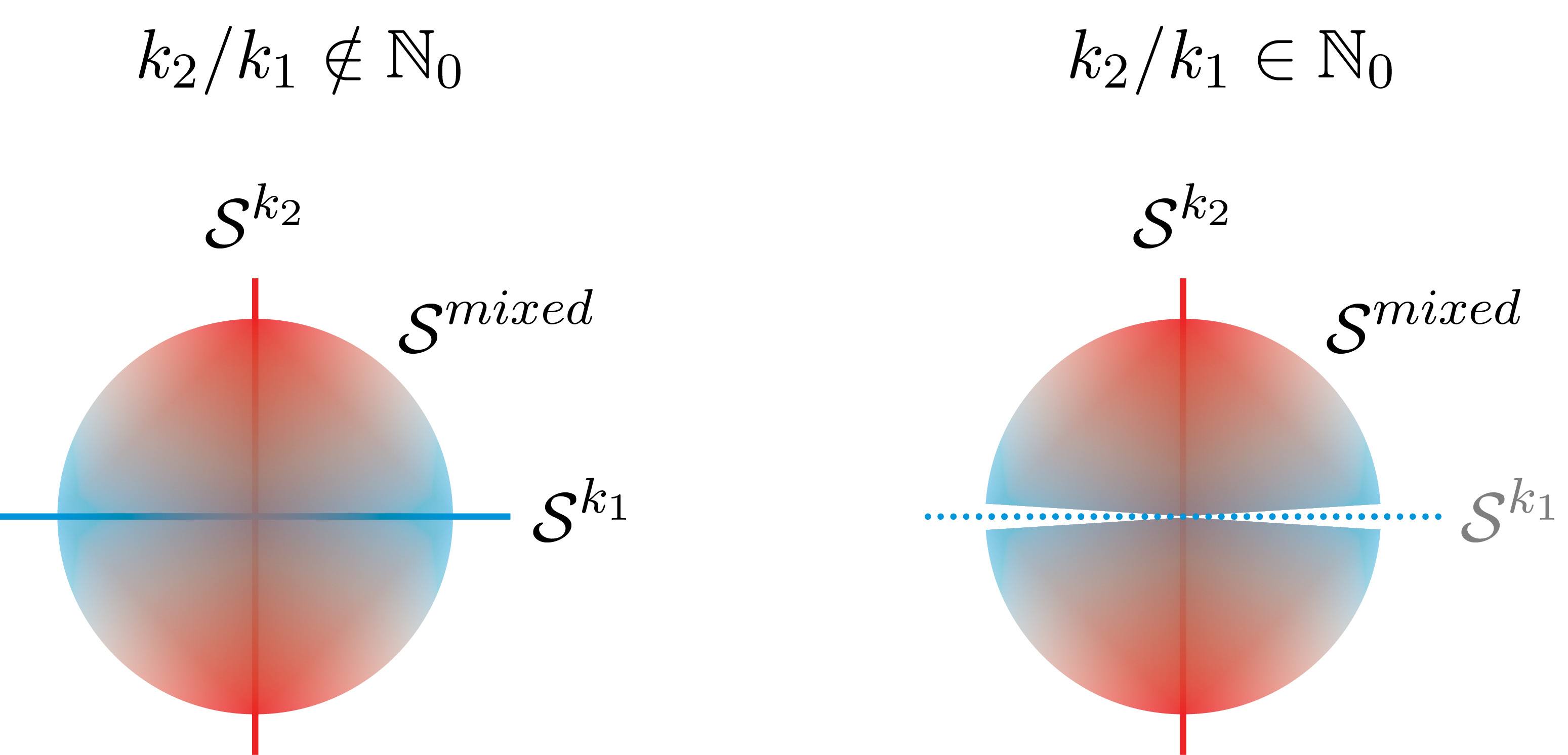}

	\caption{The local solution disks for the steady capillary-gravity Whitham equation \eqref{e:profile1} 
	around a point where the bifurcation kernel is two-dimensional.  The left-hand drawing depicts the situation in Theorem~\ref{thm:2DLocal} (i), whereas the right-hand drawing refers to case (ii) of the same theorem. The blue and red colours represent the proximity of the solutions to the pure \(k_1\)- and \(k_2\)-modes, respectively. In particular, when \(k_1\) divides \(k_2\) we have not found any waves bifurcating in the direction of  \(\cos(k_1 \cdot)\).}
	\label{fig:solutionDisk}
\end{figure}

\begin{proof}
We start by writing
	\begin{align*}
	u(t_1, t_2) &= t_1 \cos(k_1 x) + t_2 \cos(k_2 x) + v, \\
	c(t_1, t_2) &= c_0 + r, \\
	\kappa(t_1, t_2) &= \kappa_0 + p,
	\end{align*}
where, generically, we want to find \(v\), \(r\) and \(p\) parameterised by \((t_1, t_2)\) such that 
	\begin{equation}
	\label{e:F2D=0}
	F_{\kappa_0 + p}(t_1 \cos(k_1 x) + t_2 \cos(k_2 x) + v, c_0 + r) = 0,
	\end{equation}
	for sufficiently small values of \((t_1, t_2)\). As in the proof of Theorem~\ref{thm:1Dlocal}, we let $\Pi\colon C_\mathrm{even}^s(\SM) \rightarrow \ker(D_u F_{\kappa_0}(0,c_0))$ be the projection onto $\ker(D_u F_{\kappa_0}(0,c_0))$ parallel to $\ran(D_u F_{\kappa_0}(0,c_0))$, where we have equipped	 $C_\mathrm{even}^s(\SM)$ with the $L^2$ inner product \(\langle f, g \rangle = \frac{1}{\pi} \int_{-\pi}^{\pi} fg \, \diff x\). According to Corollary~\ref{cor:fredholm} equation \eqref{e:F2D=0} is then equivalent to
	\begin{equation}
		\label{e:proj2D}
		\begin{cases}
			\Pi F_{\kappa(t_1, t_2)}\left( u(t_1, t_2), c(t_1, t_2)\right) = 0 \\
			(\mathrm{Id}-\Pi)F_{\kappa(t_1, t_2)}\left( u(t_1, t_2), c(t_1, t_2)\right) = 0.
		\end{cases}
	\end{equation}
Note that under the above ansatz, where it is assumed that \(\Pi v = 0\),
	\begin{align*}
		F_\kappa\left(u, c\right) &= t_1\cos(k_1 x) + t_2\cos(k_2 x) + v\\ 
		&\quad + L_{\kappa_0+p} \left[ (t_1\cos(k_1 x) + t_2\cos(k_2 x)  + v)^2 \right. \\
			& \left. \quad - (c_0 + r)\,(t_1\cos(k_1 x) + t_2\cos(k_2 x) + v) \right] \\
			&= (v - c_0 L_{\kappa_0+p}v) + t_1 \left( \cos(k_1 x) - c_0 L_{\kappa_0+p} \cos(k_1 x) \right) \\
			&\quad + t_2 \left( \cos(k_2 x) - c_0 L_{\kappa_0+p} \cos(k_2 x) \right)\\ 
			&\quad - r L_{\kappa_0+p} \left( t_1 \cos(k_1 x) + t_2 \cos(k_2 x) + v \right) \\
			&\quad + L_{\kappa_0+p} \left( t_1 \cos(k_1 x) + t_2 \cos(k_2 x) + v \right)^2,
	\end{align*}
	and writing $L_{\kappa_0+p} = L_{\kappa_0} + (L_{\kappa_0+p} - L_{\kappa_0})$ we have
	\begin{align*}
		F_\kappa\left(u, c\right) &= D_u F_{\kappa_0}(0,c_0) v - c_0 (L_{\kappa_0+p} - L_{\kappa_0}) v\\ 
		&\quad - t_1 c_0 (L_{\kappa_0+p} - L_{\kappa_0}) \cos(k_1 x) - t_2 c_0 (L_{\kappa_0+p} - L_{\kappa_0}) \cos(k_2 x)\\ 
		&\quad - r L_{\kappa_0+p} \left( t_1 \cos(k_1 x) + t_2 \cos(k_2 x) + v \right)\\
	&\quad  + L_{\kappa_0+p} \left( t_1 \cos(k_1 x) + t_2 \cos(k_2 x) + v \right)^2\\[3pt]
	&=: D_u F_{\kappa_0}(0,c_0) v - g(t_1, t_2, r, p, v).
	\end{align*}
	Therefore \eqref{e:F2D=0} is equivalent to
	\begin{equation}
		\label{eq:equivForm}
		D_u F_{\kappa_0}(0,c_0) v = g(t_1, t_2, r, p, v),
	\end{equation}
	and we can rewrite \eqref{e:proj2D} as
	\begin{equation}
		\label{e:proj2D2}
		\begin{cases}
			0 = \Pi g(t_1, t_2, r, p, v) \\
			D_u F_{\kappa_0}(0,c_0) v = (\mathrm{Id}-\Pi) g(t_1, t_2, r, p, v).
		\end{cases}
	\end{equation}
	Note that since $v$ is orthogonal to $\ker(D_u F_{\kappa_0}(0,c_0))$ the second equation in \eqref{e:proj2D2} reads $v = D_u F_{\kappa_0}(0,c_0)^{-1}(\mathrm{Id}-\Pi)g(t_1, t_2, r, p, v)$.
	It is clear that 
\[
D_u F_{\kappa_0}(0,c_0) v - (\mathrm{Id}-\Pi) g(t_1, t_2, r, p, v) = 0
\] 
has the solution $(t_1, t_2, r, p, v) = (0, 0, 0, 0, 0)$ and at that point the Frech\`et derivative respect to $v$ is 
$D_u F_{\kappa_0}(0,c_0)$, which is invertible on $(\mathrm{Id}-\Pi)C_\mathrm{even}^s(\SM)$.
	The implicit function theorem then ensures the existence of a solution $v = v(t_1, t_2, r, p) \in (\mathrm{Id}-\Pi)C_\mathrm{even}^s(\SM)$.
By uniqueness we have that \(v(0,0,r,p) = 0\)  for all small enough values of  \(r\) and \(p\).
	Moreover, note that $\frac{\partial}{\partial t_1}v(0,0,0,0) = 0$ and $\frac{\partial}{\partial t_2}v(0,0,0,0) = 0$. This follows by differentiating \eqref{eq:equivForm} respect to $t_1$ or $t_2$, and evaluating at $(t_1, t_2, r, p) = (0, 0, 0, 0)$ recalling that $D_u F_{\kappa_0}(0,c_0)$ is invertible on its range. As a consequence, $v$ depends at least quadratically on $t_1$ and $t_2$.

	We are now left with solving the finite-dimensional problem given by the first equation in \eqref{e:proj2D2}.
	To this end, we decompose the projection $\Pi$  as $\Pi = \Pi_1 + \Pi_2$, where $\Pi_1$ is the projection onto $\cos(k_1 \cdot)$, and $\Pi_2$ is the projection onto $\cos(k_2 \cdot)$. Then
	\begin{equation*}
		\Pi g = \Pi_1 g + \Pi_2 g = Q_1 \cos(k_1 x) + Q_2 \cos(k_2 x),
	\end{equation*}
	with \(Q_j = \langle g, \cos(k_j \cdot) \rangle\), and the first line of \eqref{e:proj2D2} is equivalent to showing that 
	\begin{equation}\label{e:proj2D3}
			Q_1 = Q_2 = 0.
	\end{equation}
	To solve \eqref{e:proj2D3} we consider two cases.

\

\noindent
\emph{The non-resonant case}.	Assume that $k_2/k_1 \notin \NM_0$. Using the properties of $v$ and $\Pi_1$, a direct calculation shows that

\begin{equation}
	\label{e:Q1}
	\begin{aligned}
		Q_1 &= t_1 \left[ c_0 \big(l((\kappa_0+p)k_1) - l(\kappa_0 k_1)\big) + r \, l((\kappa_0+p)k_1) \right]  \\
		& \quad - l((\kappa_0+p)k_1) \left\langle \cos(k_1 \cdot) , \left( t_1 \cos(k_1 \cdot) + t_2 \cos(k_2 \cdot) + v(t_1, t_2, r, p) \right)^2 \right\rangle.
	\end{aligned}
\end{equation}

As $v(0,t_2,r,p)$ is $2\pi/k_2$-periodic and $k_2 \neq k_1$, the above inner term product vanishes for $t_1 = 0$.
Therefore we may write 
	\begin{equation}
		\label{e:Q1Factored}
		Q_1(t_1, t_2, r, p) = t_1 \, \Psi_1 (t_1, t_2, r, p)
	\end{equation}
	with
	\begin{equation}
		\label{e:Psi1}
		\Psi_1(t_1,t_2,r,p) = \int_0^1 \frac{\partial Q_1}{\partial t_1}(z t_1, t_2, r, p) \;\mathrm{d}z,
	\end{equation}
	and note \eqref{e:Q1} implies
	\begin{equation}
		\label{e:Psi1At00}
		\Psi_1(0, 0, r, p) = c_0 \left[ l((\kappa_0+p)k_1) - l(\kappa_0 k_1)\right] + r\, l((\kappa_0+p)k_1).
	\end{equation}
	Similarly, we have

	\begin{equation}
		\label{e:Q2}
		\begin{aligned}
			Q_2 &= t_2 \left[ c_0 \big(l((\kappa_0+p)k_2) - l(\kappa_0 k_2)\big) + r \, l((\kappa_0+p)k_2) \right]  \\
				& \quad - l((\kappa_0+p)k_2) \left\langle \cos(k_2 \cdot) , \left( t_1 \cos(k_1 \cdot) + t_2 \cos(k_2 \cdot) + v(t_1, t_2, r, p) \right)^2 \right\rangle
		\end{aligned}
	\end{equation}
	with the inner product term vanishing at $t_2=0$ since we assumed $k_2/k_1 \notin \NM_0$.
We can thus write
	\begin{equation}
		\label{e:Q2Factored}
		Q_2(t_1, t_2, r, p) = t_2 \, \Psi_2(t_1, t_2, r, p)
	\end{equation}
	with
	\begin{equation}
	\label{e:Psi2}
	\Psi_2(t_1,t_2,r,p) = \int_0^1 \frac{\partial Q_2}{\partial t_2}(t_1,z t_2, r, p) \;\mathrm{d}z
	\end{equation}
so that
	\begin{equation}
		\label{e:Psi2At00}
		\Psi_2(0, 0, r, p) = c_0 \left[ l((\kappa_0+p)k_2) - l(\kappa_0 k_2)\right] + r\, l((\kappa_0+p)k_2).
	\end{equation}
	
	Hence, condition \eqref{e:proj2D3} is equivalent solving the system
	\begin{equation*}
		\begin{cases}
			t_1 \Psi_1(t_1, t_2, r, p) = 0 \\
			t_2 \Psi_2(t_1, t_2, r, p) = 0
		\end{cases}
	\end{equation*}
	for $p$ and $r$ in a neighborhood of $(t_1,t_2,r,p)=(0,0,0,0)$.
	There are clearly four cases: $t_1 = t_2 = 0$ represents the trivial solutions.
	When $\Psi_1 = 0$ and $t_2=0$ we can apply Theorem \ref{thm:1Dlocal} concerning one-dimensional bifurcations along with
	the remark following it to obtain the solutions in $\mathcal{S}^{k_1}$.
	Similarly, when $t_1=0$ and $\Psi_2=0$ we instead retrieve the solutions in $\mathcal{S}^{k_2}$.
	To obtain the mixed-period solutions we apply the implicit function theorem to solve $\Psi_1 = \Psi_2 = 0$ near the origin.
	Indeed, note that $\Psi_1(0,0,0,0)=\Psi_2(0,0,0,0)=0$ and that the Jacobian of the map
	\[
	(r,p)\mapsto(\Psi_1(0,0,r,p),\Psi_2(0,0,r,p))
	\]
	at $(r,p)=(0,0)$ is given by
	\begin{align}
		\label{e:2DDet}
		\det&\left.
		\begin{bmatrix}
			D_r \Psi_1(0,0,r,p) & D_p \Psi_1(0,0,r,p) \\
			D_r \Psi_2(0,0,r,p) & D_p \Psi_2(0,0,r,p)
		\end{bmatrix}\right|_{(r,p)=(0,0)} \nonumber \\[5pt]
		& = c_0 \, l_{\kappa_0}( k_1) \left[ l'_{\kappa_0}( k_2)\, k_2 - l'_{\kappa_0}( k_1)\, k_1 \right],
	\end{align}
	which is always different from 0 since $l_T$ has only one positive stationary point,  $l_{\kappa_0}( k_1) \neq 0$,
	and that the terms  $l'_{\kappa_0}( k_1)$ and $ l'_{\kappa_0}( k_2)$ necessarily have opposite signs.
	Applying the Implicit Function Theorem gives	
	the solutions in $\mathcal{S}^{mixed}$.  Note in each of the above four cases, we find $r = r(t_1, t_2)$ and $p = p(t_1, t_2)$ with $p$ and $r$
	both vanishing to at least second order at $(t_1,t_2)=(0,0)$, as claimed.

\

\noindent
\emph{The resonant case}.	Assume now that $k_2/k_1 \in \NM_0$.  In this case, we are not guaranteed that $Q_2(t_1,0,r,p)=0$ for all $|t_1|\ll 1$ due to a possible
resonance in the inner product term in \eqref{e:Q2}.  Nevertheless, we do know that $Q_2(0,0,r,p)=0$.
Using polar coordinates to introduce the function
\[
\widetilde{Q}_2 (\varrho, \vartheta, r, p)=Q_2(\varrho\cos(\vartheta),\varrho\sin(\vartheta),r,p),
\]
defined for $0\leq \varrho\ll 1$ and $|(\vartheta,r,p)|\ll 1$, we find from \eqref{e:Q2} that
	\begin{align}
		\label{e:Q2Polar}
		\widetilde{Q}_2 (\varrho, \vartheta, r, p)&= \varrho \sin(\vartheta)  c_0 \big(l((\kappa_0+p)k_2) - l(\kappa_0 k_2)\big)  \\
			& \quad + \varrho \sin(\vartheta) r \, l((\kappa_0+p)k_2)  \nonumber \\
			& \quad - l((\kappa_0+p)k_2) \frac{1}{\pi}\int_{-\pi}^\pi \cos(k_2 x) \Big[ \varrho \cos(\vartheta) \cos(k_1 x) \nonumber \\
			& \quad + \varrho \sin(\vartheta) \cos(k_2 x) + v(\varrho \cos(\vartheta), \varrho \sin(\vartheta), r, p) \Big] ^2 \;\mathrm{d} x\nonumber.
	\end{align}
Since $\widetilde Q_2(0,\vartheta,r,p)=0$, we may as before write
	\begin{equation}
		\label{e:Q2FactoredPolar}
		\widetilde Q_2(\varrho, \vartheta, r, p) = \varrho \, \widetilde\Psi_2(\varrho, \vartheta, r, p)
	\end{equation}
	with
	\begin{equation}
		\label{e:Psi2Polar}
		\widetilde\Psi_2(\varrho,\vartheta,r,p) = \int_0^1 \frac{\partial \widetilde Q_2}{\partial \varrho}(z \varrho,\vartheta, r, p) \;\mathrm{d}z
	\end{equation}
	so that
	\begin{equation}
		\label{e:Psi2PolarAt0}
		\begin{aligned}
			\widetilde\Psi_2(0,\vartheta,r,p) &= \sin(\vartheta)\, c_0 \left[ l((\kappa_0+p)k_2) - l(\kappa_0 k_2)\right]  \\
									& \quad + r \sin(\vartheta)\, l((\kappa_0+p)k_2).
		\end{aligned}
	\end{equation}
	For $Q_1$, instead, all the previous calculations remain true and hence, similarly defining the function
\begin{equation}\label{e:Psi1Polar}
\widetilde{\Psi}_1 (\varrho, \vartheta, r, p):=\Psi_1(\varrho\cos(\vartheta),\varrho\sin(\vartheta)),
\end{equation}
	it follows in this resonant case that \eqref{e:proj2D3} is equivalent to solving the system
	\begin{equation*}
		\begin{cases}
			\varrho \cos(\vartheta) \widetilde\Psi_1(\varrho, \vartheta, r, p) = 0 \\
			\varrho \, \widetilde\Psi_2(\varrho, \vartheta, r, p) = 0.
		\end{cases}
	\end{equation*}
	for $r$ and $p$ in a neighborhood of $(\varrho,\vartheta,r,p)=(0,0,0,0)$.
	The case $\varrho=0$ clearly corresponds to trivial solutions, while the case $\cos(\vartheta)=0, \widetilde\Psi_2 = 0$ corresponds to solutions in $\mathcal{S}^{k_2}$ via the
	application of Theorem \ref{thm:1Dlocal}.
	For the case that $\widetilde\Psi_1=0, \widetilde\Psi_2=0$ we again apply the implicit function theorem near the origin.  
	Indeed, note that both $\widetilde\Psi_1$ and $\widetilde\Psi_2$ both vanish at the origin and that the Jacobian of the map
	\[
	(r,p)\mapsto(\widetilde\Psi_1(0,0,r,p),\widetilde\Psi_2(0,0,r,p))
	\]
	at $(r,p)=(0,0)$ is given by 
	\begin{align}
		\label{e:2DDetPolar}
		\det&\left.
		\begin{bmatrix}
			D_r \widetilde\Psi_1(0,\vartheta,r,p) & D_p \widetilde\Psi_1(0,\vartheta,r,p) \\
			D_r \widetilde\Psi_2(0,\vartheta,r,p) & D_p \widetilde\Psi_2(0,\vartheta,r,p)
		\end{bmatrix}\right|_{(r,p)=(0,0)} \nonumber \\[5pt]
		& = \sin(\vartheta) \, c_0 \, l(\kappa_0 k_1) \left[ l'(\kappa_0 k_2)\, k_2 - l'(\kappa_0 k_1)\, k_1 \right],
	\end{align}
	which, by the same considerations we applied to \eqref{e:2DDet}, is non-zero so long as $\sin(\vartheta) \neq 0$
	Therefore, for any fixed $\delta>0$, restricting to $\delta < |\vartheta| < \pi-\delta$ gives the solutions in $\mathcal{S}_\delta^{mixed}$, as desired
\end{proof}

\section{Global bifurcation diagram}\label{sec:discussion}
In this section we give some additional properties of solutions of \eqref{e:profile1}, that is, of continuous and finitely periodic solutions. Our goal is to communicate the global bifurcation picture, as gathered from both analytic and numerical evidence, as well as to relate this to some comparable studies. We first present and prove the additional analytic results, after which we discuss the bifurcation diagram of the periodic capillary-gravity Whitham with the help of Figures~\ref{fig:strong} and~\ref{fig:weak}.

\begin{proposition}\label{prop:smooth}
Any \(L^\infty(\RM)\)-solution of the steady capillary-gravity Whitham equation \eqref{e:profile1} is smooth.
\end{proposition}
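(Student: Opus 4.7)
The plan is a standard bootstrap argument based on the smoothing property of $L_T$ from Lemma~\ref{lem:compactOperator}. First I would rewrite the profile equation \eqref{e:profile1} in the equivalent form \eqref{eq:whitham},
\[
u = L_T(cu - u^2),
\]
which is well defined for any bounded continuous $u$ since $K_T \in L^1(\RM)$ by Theorem~\ref{thm:regularity and decay}. A bounded continuous function belongs to $\zygmund^0(\RM)$: the Littlewood--Paley pieces obey $\|\psi_j^2(D)u\|_{L^\infty} \leq \|\check{\psi}_j^2\|_{L^1}\|u\|_{L^\infty}$ with the right-hand side bounded uniformly in $j$ by the dilation structure of the $\psi_j$, so $L^\infty(\RM) \hookrightarrow \zygmund^0(\RM)$.

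Next I would run the induction. Since $u, u^2 \in L^\infty(\RM) \hookrightarrow \zygmund^0(\RM)$, Lemma~\ref{lem:compactOperator} applied to the fixed-point identity yields $u \in \zygmund^{1/2}(\RM) = C^{1/2}(\RM)$. For every $s > 0$ the space $\zygmund^s(\RM)$ embeds into $L^\infty$ and is a Banach algebra under pointwise multiplication (standard paraproduct/Bony decomposition for $B^s_{\infty,\infty}$ with $s > 0$), so $u \in \zygmund^s(\RM)$ implies $u^2 \in \zygmund^s(\RM)$, and applying $L_T$ gives $u \in \zygmund^{s+1/2}(\RM)$. Iterating indefinitely shows $u \in \zygmund^s(\RM)$ for every $s \geq 0$. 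Using the embedding $\zygmund^s(\RM) \hookrightarrow C^k(\RM)$ whenever $s > k$, one concludes $u \in C^\infty(\RM)$.

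The only subtle step is the very first one: starting from a solution with no assumed regularity beyond continuity and boundedness, one must land in a scale on which Lemma~\ref{lem:compactOperator} is available. The embedding $L^\infty \hookrightarrow \zygmund^0$ delivers precisely this, and thereafter the algebra property of the Zygmund scale for $s > 0$ meshes seamlessly with the fixed $\tfrac{1}{2}$-smoothing of $L_T$, so the induction is essentially mechanical.
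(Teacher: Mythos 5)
Your proposal is correct and takes essentially the same route as the paper's own proof: rewriting the profile equation in the smoothing form $u = L_T(cu - u^2)$, noting that $L^\infty(\RM)$ embeds into $\zygmund^0(\RM)$, and then bootstrapping using the $\tfrac{1}{2}$-smoothing of $L_T$ (Lemma~\ref{lem:compactOperator}) together with the Banach algebra property of $\zygmund^s(\RM)$ for $s>0$. You simply spell out the Littlewood--Paley embedding step and the first iteration in a little more detail than the paper, which invokes the same facts more tersely.
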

\begin{proof}
This is immediate from writing the equation in the form \eqref{eq:whitham}. For any \(T > 0\), the operator \(L_T\) is a smoothing 
Fourier multiplier operator of order \(-\frac{1}{2}\). This applies in particular to the scale of Zygmund spaces 
\(\zygmund^s(\RM)\), \(s \geq 0\), see Lemma~\ref{lem:compactOperator}. As \(L^\infty(\RM)\) is an algebra embedded in \(\zygmund^{0}(\RM)\) \cite[Section 13.8]{Taylor2011pde}, and the spaces \(\zygmund^s(\RM)\) are Banach algebras for \(s > 0\), the result follows by bootstrapping.
\end{proof}

\begin{proposition}\label{prop:no solutions}
\( \)\\[-10pt]
\begin{itemize}
\item[(i)] 
There are no periodic solutions of \eqref{e:profile1} in the region
\[
\max u < \min\{0, c-1\}.\\[2pt]
\]
\item[(ii)]
Except for the bifurcation points when \(c = \frac{1}{l_T(k)} > 0\) there are no small periodic solutions in a vicinity of any point along the curve of trivial solutions \((u,c) = (0,c)\), \(c \in \RM\). Similarly, there are no periodic solutions that are small perturbations of the constant solutions \((u,c) = (c-1,c)\), \(c \in \RM\), except for the bifurcation points that appear along this line for \(c < 2\).\\[-6pt]
\item[(iii)]
The solution \(u=0\) is the only periodic solution for \(c=1\).\\[-6pt]
\item[(iv)] For $T\geq \frac{4}{\pi^2}$, all periodic solutions satisfy 
\[
\textstyle \max  u \leq \frac{c^2}{4}, 
\]
with equality if and only if \(u\) is a constant solution and either \(c = 0\) or \(c=2\).
\end{itemize}
\end{proposition}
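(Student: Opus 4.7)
The plan is to handle the four parts with two complementary tools: averaging of the profile equation for (i) and (iii), the implicit function theorem at non-bifurcation points for (ii), and a maximum-principle argument exploiting positivity of $K_T$ for (iv).

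Averaging \eqref{e:profile1} over a period and using that $m_T(0)=1$, so that $M_T$ preserves the mean, gives the crucial identity
\begin{equation*}
\overline{u^{2}} = (c-1)\,\bar u.
\end{equation*}
Part (iii) is then immediate: if $c = 1$ the identity forces $\overline{u^{2}} = 0$, and by continuity $u\equiv 0$. For part (i) I would combine this identity with the Cauchy--Schwarz/Jensen inequality $\overline{u^{2}} \ge \bar u^{2}$ to obtain $\bar u\bigl((c-1)-\bar u\bigr) \ge 0$, and then split into cases on the sign of $c-1$. If $c>1$ the identity gives $\bar u \ge 0$, which contradicts $\max u < 0$. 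If $c<1$ the inequality forces $\bar u \ge c-1$, while the hypothesis $\max u < c-1$ yields $\bar u < c-1$. The case $c=1$ is covered by (iii).

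For part (ii), I would apply the implicit function theorem on $\zygmund^s_{\rm even}(\SM)\times\RM$. Linearizing $F$ at $(0,c)$ produces $\mathrm{Id} - cL_T$, which by Corollary~\ref{cor:fredholm} is Fredholm of index $0$ and is invertible exactly when $c\,l_T(k)\neq 1$ for every $k\in\NM_0$; at any such point IFT and uniqueness give $u(c)\equiv 0$ locally. One checks directly that $F(c-1,c)=0$, and the linearization there is $\mathrm{Id}-(2-c)L_T$, whose kernel is trivial unless $c = 2 - 1/l_T(k)$ for some $k\in\NM_0$. The value $k=0$ gives $c=1$ (the intersection of the two solution lines), while $k\ge 1$ gives values strictly less than $2$, which matches the stated range. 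At all other points IFT produces a unique local branch which, by uniqueness, must coincide with the constant family $u=c-1$.

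For part (iv), use the smoothing form $u = K_T * (cu - u^{2})$. By Theorem~\ref{thm:complete monotonicity} the kernel $K_T$ is completely monotone on $(0,\infty)$ when $T\ge \frac{4}{\pi^2}$, hence non-negative, and $\int_\RM K_T = 1$ from \eqref{eq:symbol_limit_at_zero}. Evaluating at a point $x_0$ where $u$ attains its maximum,
\begin{equation*}
\max u \;=\; \int_\RM K_T(x_0-y)\bigl(cu(y)-u(y)^{2}\bigr)\diff y \;\le\; \sup_{y\in\RM}\Bigl(cy - y^{2}\Bigr) \;=\; \frac{c^{2}}{4}.
\end{equation*}
Equality forces $cu(y)-u(y)^{2}=c^{2}/4$ wherever $K_T$ is positive; by strict positivity of the periodised kernel $K_p$ on $(-\pi,\pi)$ (Theorem~\ref{thm:complete monotonicity} together with continuity of $u$) this gives $u\equiv c/2$, whence substitution back into \eqref{e:profile1} yields $c(c-2)=0$.

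The main obstacle is the bookkeeping in part~(ii): making sure the bifurcation sets are correctly identified and that the two distinguished solution lines meet only at $c=1$, so that uniqueness from IFT gives precisely the stated conclusion. Everything else reduces to short, direct calculations using the tools already assembled in Sections~\ref{sec:kernel} and~\ref{sec:onedim}.
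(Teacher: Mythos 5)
Your proposal is correct and follows essentially the same strategy as the paper: the averaged identity $(c-1)\int u = \int u^2$ for (i) and (iii), the implicit function theorem for (ii), and the completed-square estimate $u = L(cu-u^2) = \tfrac{c^2}{4} - L(\tfrac{c}{2}-u)^2$ with positivity of the kernel for (iv). The only noticeable divergences are cosmetic: in (i) the paper argues pointwise that $u[u-(c-1)]>0$ when $u<c-1<0$ and dispatches $c>1$ by the Galilean symmetry $c\mapsto 2-c$, $u\mapsto u+1-c$, whereas you route through Jensen's inequality $\overline{u^2}\ge\bar u^2$ and a sign analysis of $\bar u$; and in (ii) the paper again invokes Galilean invariance to transfer the non-bifurcation statement from $u=0$ to $u=c-1$, whereas you compute the linearisation $\mathrm{Id}-(2-c)L_T$ at $(c-1,c)$ directly. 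Both variants are fine; the Galilean shortcut is slightly more economical, while your explicit computations are more self-contained.
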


\begin{remark}
The qualifier 'periodic' is here used only to guarantee that solutions, which we have defined to be continuous, are integrable over their period. 
\end{remark}

\begin{proof}
As all steady solutions are smooth, and the symbol of \(L_T\) satisfies \(l_T(0) = 1\), one may as in \cite{Ehrnstrom2016owc} integrate over any finite period to obtain 
\begin{equation}\label{eq:integral equality}
(c-1) \int_{-\pi}^{\pi} u\, \diff x =  \int_{-\pi}^{\pi} u^2\, \diff x.
\end{equation}
(The same argument works for other periods as well.) This is a contradiction for 
\(u < c - 1 < 0\). The analogous result for \(c > 1\) then follows by the Galilean invariance \(c \mapsto 2 - c\), \(u \mapsto u + 1 - c\).

For the second statement, consider first \(c < 1\). 
As the symbol \(l_T\) is positive, and the operator \(L_T\) is a linear isomorphism \(\zygmund^s(\SM) \to \zygmund^{s+\frac{1}{2}}(\SM)\) unless \(c l_T(k) = 1\) (cf. \eqref{eq:nontrivial kernel}), the implicit function theorem implies that there are no small solutions in a vicinity except for the bifurcation points found in Theorems~\ref{thm:1Dlocal} 
and~\ref{thm:2DLocal} when \(c < 1\).  In particular, there are no such solutions for \(c < 1\) in the case of strong surface tension \(T \geq \frac{1}{3}\), and 
none for \(c < 0\) in the case of weak surface tension \(0 < T < \frac{1}{3}\). By Galilean invariance, the corresponding result applies to the line \(u = c - 1\) for \(c \geq 1\). 

The proposition (iii) is immediate from \eqref{eq:integral equality}.  

For (iv), note that 
\[
u(x) = L (cu - u^2) = \frac{c^2}{4} - L \left(\frac{c}{2} -u \right)^2  \leq \frac{c^2}{4},
\]
when \(T \geq \frac{4}{\pi^2}\), as the integral kernel of \(L\) is then everywhere positive. This proves that \(\max u \leq \frac{c^2}{4}\), with equality if and only if \((u,c) = (1,2)\) or \((u,c) = (0,0)\), as these are the only constant solutions along the line \(\max u = \frac{c}{2}\). 
\end{proof}

\begin{proposition}\label{prop:leaving_c/2}
If the surface tension satisfies \(T \geq \frac{4}{\pi^2}\), then the bifurcation curve found in Theorem~\ref{thm:1Dglobal} for \(k=1\) can be constructed such that it contains a subsequence of solutions that are all single-crested (bell-shaped) in each minimal period and that either:\\[-12pt]
\begin{itemize}
\item[(i)] is bounded in wavespeed but with \(\min u\) unbounded; or\\[-10pt] 
\item[(ii)] eventually leaves every set \(\{\max u \leq \lambda c\}\) for \(\lambda < \frac{1}{2}\).
\end{itemize}
\end{proposition}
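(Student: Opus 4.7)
The plan is to follow the global bifurcation curve from Theorem~\ref{thm:1Dglobal} while tracking bell-shapedness, using the complete monotonicity of the periodised kernel $K_p$ granted by Theorem~\ref{thm:complete monotonicity} (applicable since $T \geq \frac{4}{\pi^2}$). The key structural fact I would establish first is that, inside the safe region $\{\max u < c/2\}$, the class of strictly bell-shaped profiles is invariant under the fixed-point equation $u = L(cu - u^2)$. Indeed, $K_p$ is even, strictly positive, and strictly decreasing on $(0, \pi)$; a direct symmetry computation splitting the convolution according to whether $x \pm y$ lie in $(0, \pi)$ or its reflection shows that $L$ preserves the class of even, strictly decreasing functions on $(0, \pi)$; and the nonlinearity $u \mapsto cu - u^2$ is strictly increasing on $(-\infty, c/2)$, so composing it with a bell-shaped $u$ with $\max u < c/2$ preserves the monotonicity pattern. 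A strict maximum principle based on strict positivity of $K_p$ then ensures strict bell-shape is preserved along the curve as long as one remains in this region.

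With this invariance, I would extract a bell-shaped sub-curve starting from the local bifurcation of Theorem~\ref{thm:1Dlocal}: for $k=1$, the local solutions $u(t) = t\cos(x) + \mathcal{O}(t^2)$, $c(t) = c_0 + \mathcal{O}(t)$ are strictly bell-shaped and satisfy $\max u(t) < c(t)/2$ for $|t|$ small. Setting
\[
T^* = \sup \{ t > 0 \colon u(s) \text{ is strictly bell-shaped and } \max u(s) < c(s)/2 \text{ for all } s \in [0, t]\},
\]
we have $T^* > 0$. If $T^* < \infty$, maximality combined with the strict maximum principle (which excludes a loss of bell-shape while inside the safe region) forces $\max u(t)/c(t) \to 1/2$ as $t \nearrow T^*$; choosing any $t_n \nearrow T^*$ then produces a bell-shaped subsequence satisfying alternative~(ii).

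If $T^* = \infty$, the entire curve is bell-shaped and contained in $\{\max u < c/2\}$, and Theorem~\ref{thm:1Dglobal} provides either norm blow-up or a loop. A loop among bell-shaped analytic solutions is excluded by combining the analyticity of the curve with the local uniqueness at the bifurcation point from Theorem~\ref{thm:1Dlocal}. Hence the norm blows up. If $c$ remains bounded, Proposition~\ref{prop:no solutions}(iv) bounds $\max u \leq c^2/4$, and Lemma~\ref{lem:compactOperator} controls higher-order Zygmund norms by $\|u\|_{L^\infty}$, so the blow-up must proceed through $\min u \to -\infty$, giving alternative~(i). If instead $c \to \infty$, I would rescale $v = u/c$; the equation becomes $c(v - v^2) = M v$ with $M = L^{-1}$ of order $1/2$. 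Since $v_n$ is uniformly bounded in $L^\infty$ and bell-shaped (hence BV-bounded), a compactness argument yields a subsequential limit $v_\infty$ with $v_\infty(1 - v_\infty) = 0$ almost everywhere. Under the assumption $\max v_n \leq \lambda < 1/2$ this forces $v_\infty \equiv 0$, and the analyticity of the global curve should rule out such collapse back to the trivial state at $c = \infty$. Hence $\limsup_n \max u_n/c_n \geq 1/2$, yielding alternative~(ii).

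The main obstacle is this last sub-case, where the curve remains bell-shaped with $c \to \infty$: rigorously excluding the degeneration of the scaled amplitude $v = u/c$ to zero requires a delicate interplay between the smoothing and compactness properties of $L$, the bell-shape/BV structure of solutions, and the real-analytic continuation of the global curve. A secondary, more routine, subtlety is the careful justification that the strict maximum principle indeed prevents a bell-shaped solution from degenerating to a flatter one while strictly inside the safe region.
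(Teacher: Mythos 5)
Your overall scheme --- preserve bell-shape while $\max u < c/2$, exclude looping, then split on bounded versus unbounded wavespeed --- matches the structure of the paper's proof up to the final sub-case. The bell-shape preservation, the extraction of subsequences for $T^* < \infty$ (yielding (ii)), and the bounded-$c$ argument via Proposition~\ref{prop:no solutions}(iv) (yielding (i)) are all in agreement with the paper, which however works directly from the nodal identity
\[
u'(x) = 2\int_0^\pi \bigl(K_p(x-y) - K_p(x+y)\bigr)\Bigl(\tfrac{c}{2} - u(y)\Bigr)u'(y)\,\diff y
\]
rather than framing things as invariance of a fixed-point map, and cites \cite[Lemma~5.5]{EJC18} to rule out looping.

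The genuine gap is the sub-case where the curve remains bell-shaped with $c_n \to \infty$ and $\max u_n \leq (\tfrac12 - \delta)c_n$. Your rescaling $v = u/c$ and BV compactness can, at best, give $v_n \to v_\infty$ with $v_\infty(1-v_\infty) = 0$ a.e.\ (via duality, $\int (v_n - v_n^2)\phi = c_n^{-1}\int v_n M\phi \to 0$), hence $v_\infty = 0$ under the $\lambda < 1/2$ bound. But $v_\infty = 0$ is not a contradiction: it merely asserts $\|u_n\|_{L^1(\SM)} = o(c_n)$, a perfectly consistent scenario, and ``analyticity of the global curve'' does not preclude the rescaled amplitude from degenerating as $c\to\infty$ --- the global theorem lives in $\zygmund^s(\SM)\times\RM$ at finite $c$ and says nothing about asymptotic rescalings. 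Upgrading BV compactness to uniform $C^{1/2+\varepsilon}$ control, which is what would be needed to make $c_n^{-1}Mv_n$ uniformly small, is precisely the critical Sobolev/Gagliardo--Nirenberg balance the paper flags as out of reach. The paper's own resolution of this sub-case is entirely different and pointwise: evaluating the nodal identity at a point $x_n$ minimizing $-u_n'$ over $[\delta, \pi-\delta]$, using $\tfrac{c_n}{2}-u_n(y)\geq\delta c_n$ and a uniform positive lower bound on $\int_\delta^{\pi-\delta}\bigl(K_p(x_n-y)-K_p(x_n+y)\bigr)\,\diff y$ for $x_n\in[\delta,\pi-\delta]$, gives
\[
-u_n'(x_n) \gtrsim \delta c_n\bigl(-u_n'(x_n)\bigr),
\]
which is impossible once $c_n$ is large since $-u_n'(x_n)>0$ by strict monotonicity. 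This concrete pointwise estimate, not a compactness or analyticity argument, is the ingredient your proposal is missing.
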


\begin{proof}
For even and periodic solutions \(u\) one may as in \cite{Ehrnstrom2016owc,EJC18} use \eqref{e:profile1} to write
\begin{equation}\label{eq:nodal}
u'(x) = 2\int_0^\pi\left(K_p(x-y)-K_p(x+y)\right)\left(\frac{c}{2}-u(y)\right)u'(y) \diff y.
\end{equation}
When \(K_p\) is completely monotone, and \(u\) is decreasing on $(0,\pi)$ with \(u \leq \frac{c}{2}\), this implies that \(u\) is strictly decreasing on the same interval (unless \(u\) is a constant), and a standard argument \cite[Lemma 5.5]{EJC18} yields that looping as in alternative (ii) is ruled out.  

Let us therefore, for a contradiction, assume that the bifurcation curve remains within the set \(\{\max u < \frac{c}{2}\}\).  Recalling that Theorem~\ref{thm:complete monotonicity} and
\cite[Proposition 3.2]{Ehrnstrom2016owc} together imply that $K_p$
is completely monotone on $(0,\pi)$ when \(T \geq \frac{4}{\pi^2}\), it follows that alternative (i) in Theorem~\ref{thm:1Dglobal} has to hold. As solutions are smooth, this is equivalent to a sequence of solutions \((u_n, c_n) = (u(t_n),c(t_n))\) satisfying \(|u_n|_\infty + |c_n| \to\infty\) as \(n \to \infty\).

Assume first that \(\{c_n\}_n\) is bounded. Then \(\{u_n\}_n\) is unbounded in \(L^\infty(\RM)\), and therefore \(\min u_n \to - \infty\) as \(n \to \infty\) is the only possibility, by Proposition~\ref{prop:no solutions}~(iv).

If, on the other hand, \(\{c_n\}_n\) is unbounded, pick a subsequence with \(\lim_{n \to \infty} |c_n| = \infty\). Note that \(c_n\) cannot pass \(c=1\), as Proposition~\ref{prop:no solutions}~(iii) shows that it would have to pass via \((u,c) = (0,1)\), but near that point there are only small constant solutions (see Remark~\ref{rem:transcritical} and Theorem~\ref{thm:2DLocal}). Hence, the solution curve would first have to connect to either the curve \(u = c-1\) or \(u = 0\). But, as described in Proposition~\ref{prop:no solutions}~(ii), the first of these has no bifurcation points for strong surface tension and \(c > 1\), and connection back to the bifurcation points of the second is excluded by the argument used in \cite[Lemma 5.5]{EJC18} (no looping). Hence, \(\lim_{n \to \infty} c_n = \infty\).

We now show that this is impossible when \(\max u_n \leq c_n/2\). Recall that we are following a branch of the curve for which \(u\) is even, and strictly increasing on the half-period \((-\pi,0)\), in view of the positivity of the integrand in \eqref{eq:nodal}. If there exists \(\delta > 0\) such that \(\frac{c_n}{2} - \max u_n \geq \delta c_n\), pick \(x_n \in (0,\pi)\) such that
\[
-u_n'(x_n) = \min_{y \in [\delta,\pi-\delta]} (-u_n'(y)).
\]

\begin{align*}
-u_n'(x_n) &= 2\int_0^\pi\left(K_p(x_n-y)-K_p(x_n+y)\right)\left(\frac{c_n}{2}-u_n(y)\right) (-u'(y)) \diff y\\
&\geq 2 \delta c_n \int_\delta^{\pi-\delta} \left(K_p(x_n-y)-K_p(x_n+y)\right) (-u_n'(y)) \diff y\\
&\geq -2 \delta c_n u_n'(x_n)   \int_\delta^{\pi-\delta} \left(K_p(x_n-y)-K_p(x_n+y)\right) \diff y.
\end{align*}
On the interval of consideration, \(K_p(x_n-y)-K_p(x_n+y)\) is bounded from below by a positive constant (it is zero only for \(y = k\pi\), \(k \in \ZM\)). Although it has a singularity at \(x_n = y\), it tends to \(\infty\) there, so we may estimate it from below, uniformly in \(x_n\), by
\[
\min \left\{\left(K_p(x_n-y)-K_p(x_n+y)\right) \colon (x,y) \in [\delta, \pi-\delta] \times [\delta, \pi-\delta]\right\}  \gtrsim 1.
\]
Consequently, 
\[
-u_n'(x_n) \gtrsim -c_n u_n'(x_n),
\]
which is not possible, as \(c_n \to \infty\) and \(-u_n'(x_n) > 0\) for all \(n\).
\end{proof}

\subsection{Discussion and summary of results}
Analytically, we have determined almost completely\footnote{We lack a proof of non-existence of the \(k_2\)-modal waves in the resonant case of Theorem~\ref{thm:2DLocal}, but these waves do not seem to exist numerically.} the solution set in near the lines of constant solutions \(u = 0\) and \(u = c -1\). The result depends crucially on the strength of surface tension \(T\), and, apart from the easily seen change in the dispersion relation at \(T =  \frac{1}{3}\), we have seen in Section~\ref{sec:kernel} that there is a more subtle change at \(T = \frac{4}{\pi^2}\), at which the integral kernel of the dispersive operator \(L\) loses its positivity and monotonicity; that has made it possible to prove some additional, but not complete, results for the case of (very) strong surface tension \(T \geq \frac{4}{\pi^2}\). To complete the picture where our analytical methods have so far proved insufficient, we have additionally run spectral bifurcation code similar to the one used in \cite{Remonato2017nbf} to get a more complete picture. We will present the main result of these calculations as well, but only in overview form.

\begin{figure}[t]
\begin{center}
\hspace*{-2em}\includegraphics[scale=0.5]{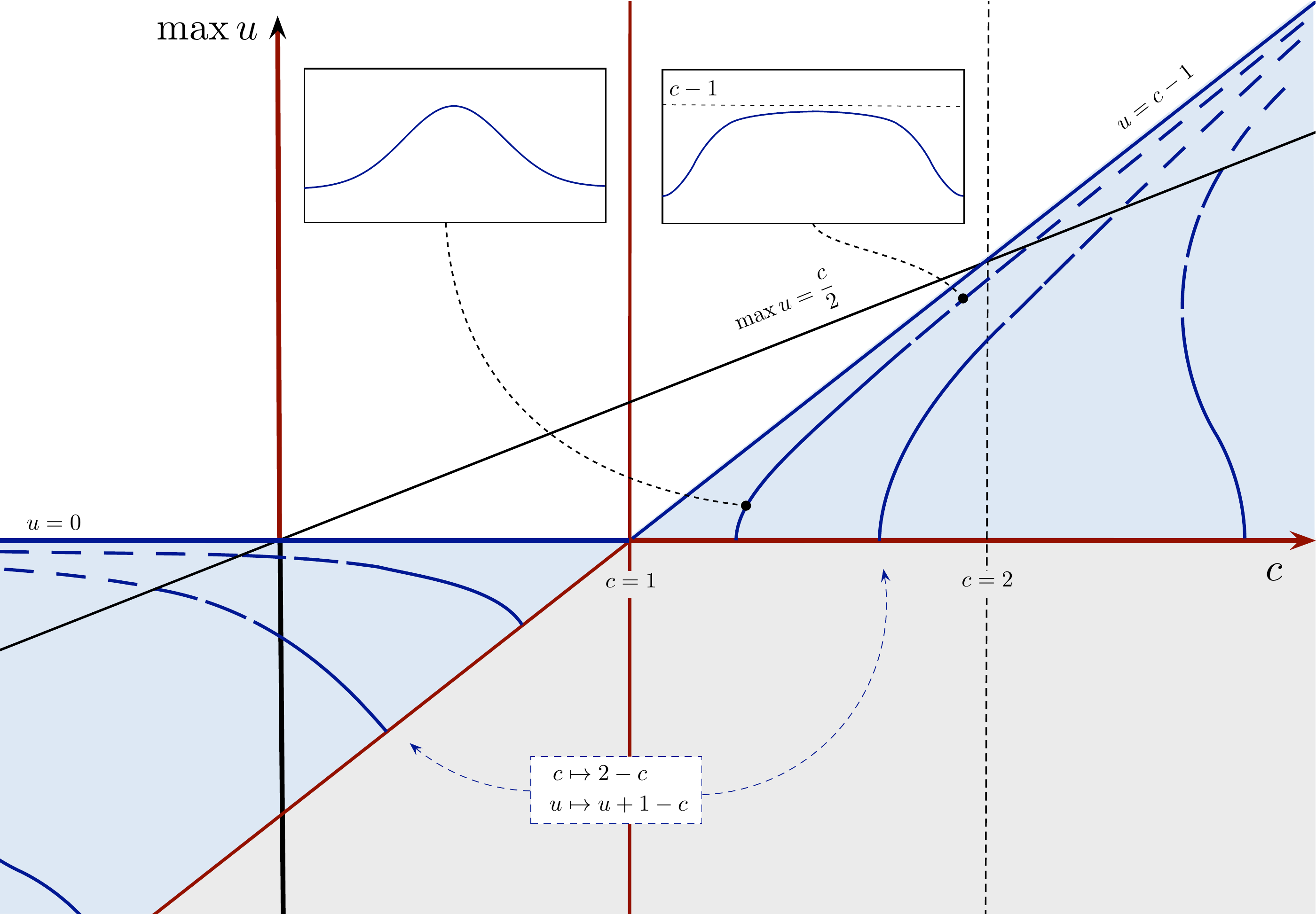}
\caption{A schematic drawing of the global bifurcation diagram in the case of strong surface tension $T>\frac{1}{3}$ (partly \(T = \frac{4}{\pi^2}\)). The diagram is discussed in detail in Section~\ref{subsec:strong}.}
\label{fig:strong}
\end{center}
\end{figure}

To start our discussion, focus first on one of the Figures~\ref{fig:strong} or~\ref{fig:weak}. Just as the regular Whitham equation, the capillary-gravity Whitham equation \eqref{e:profile1} admits two lines of constant solutions, namely \(u = 0\) and \(u = c-1\). These cross at \(c = 1\), the point of a transcritical bifurcation (see Remark~\ref{rem:transcritical}), and also a bifurcation point for solitary \cite{MR3485840} and generalised solitary \cite{arXiv:1807.11469} waves ; additionally, \(c=1\) is the symmetry line for the Galilean invariance
 \[
c \mapsto 2 - c, \qquad u \mapsto u + 1 - c, 
 \]
 that leaves \eqref{e:profile1} invariant, and is shared by the regular Whitham equation \cite{Ehrnstrom2016owc}. The two constants \(0\) and \(c-1\) correspond to the two natural depths that appear for steady flows in the water wave problem, see for example \cite{KK11}. In addition to these two lines, there is a third, mathematical, constant arising from the structure of \eqref{e:profile1} when completing the square, namely \(\frac{c}{2}\). While this constant is of physical and absolute importance in the regular Whitham equation --- being the height above surface of a highest wave --- and while it appears as a technical difficulty when trying to expand the result of Proposition~\ref{prop:leaving_c/2}, numerical evidence indicate that this construct is probably only artificial in the presence of capillarity. Still, we have indicated it in Figure~\ref{fig:strong} using the line \(\max u = \frac{c}{2}\) (but not in Figure~\ref{fig:weak}, as it did not prove any help in communicating our results). Additionally, in both Figures~3 and~4 the greyed-out area illustrates Proposition~\ref{prop:no solutions}, that there are no solutions in the region where 
 \[
 \max u < \min\{0, c-1\}.
 \]
 A final common feature of the strong and weak surface tension case is that solutions cannot pass \(c=1\), except via the transcritical bifurcation point \((u,c) = (0,1)\), where, locally, the only solutions are given by the constant functions \(u = 0\) and \(u = c-1\). This fact may be induced from Proposition~\ref{prop:no solutions}~(iii) and Remark~\ref{rem:transcritical}, and is indicated in the figures with a solid red line (no solutions pass). Note that both figures are for a fixed and finite period.

\subsection{The case of strong surface tension}\label{subsec:strong}
Now, let us focus on the strong surface tension case and especially the case \(T \geq \frac{4}{\pi^2}\), which is depicted in Figure~\ref{fig:strong}. As described in Theorem~\ref{thm:1Dlocal}, we have small waves of the approximate linear form \(\cos(k \cdot)\) bifurcating at 
\[
c_k = \frac{1}{l_T(k)} > 1.
\]
The bifurcation curves of these waves are indicated by solid blue lines, with a zoom-in on a small wave along the main bifurcation branch \(k=1\). The red line \(\{u = 0, 1 < c \neq c_k\}\) shows the result of Proposition~\ref{prop:no solutions}~(ii), that there are no other supercritical solutions in a \(\zygmund^s\)-vicinity of the line of vanishing solutions. By Galilean invariance, each of these curves (and non-existence results) has an exact counterpart for \(c < 1\) along the line \(u = c-1\), and we do not comment more on that in the case of strong surface tension.

The initial direction of the curves is calculated in Remark~\ref{rem:cdotdot}: analytically, sub-critical bifurcation is established for small enough values of \(k\), and super-critical bifurcation as \(k \to \infty\); numerically, this shift happens at exactly one value, and we have illustrated this with the last visible (third) curve bending leftwards from the bifurcation point, while the two first bend right-wards (the direction after the Galilean shift is opposite).

The result of the global bifurcation theory as carried out in Theorem~\ref{thm:1Dglobal} is that each curve, when considered in a space of \(2\pi/k\)-periodic functions, either is unbounded in \(\zygmund^s \times \RM\), or returns (loops) back to \((u,c) = (0,c_k)\) in a finite period of the bifurcation parameter. The standard tool for ruling out looping is by preserving the unimodal nodal pattern along the main bifurcation branch, an argument for which one relies on maximum principles/positivity of the underlying operators. As we prove in Theorem~\ref{thm:complete monotonicity} that this property is present when the surface tension coefficient satisfies \(T \geq \frac{4}{\pi^2}\) (and only then)\footnote{It is fully possible that the periodised kernel is positive even when the original kernel is not, depending on the period, but we have not investigated that here.}, the complete monotonicity of the kernel \(K\) established in Theorem~\ref{thm:complete monotonicity} for that case provides hope for stronger results. Note that, regardless of the exact value of \(T > 0\), it follows from Lemma~\ref{lem:compactOperator} that all solutions of \eqref{e:profile1} are smooth, so that alternative (i) in Theorem~\ref{thm:1Dglobal} is equivalent to a sequence of solutions satisying $|u|_\infty + |c| \to\infty$ along the bifurcation curve.
 
While we cannot rule out alternative (ii) in Theorem~\ref{thm:1Dlocal} completely, see Proposition~\ref{prop:leaving_c/2}, we can at least show that looping would require leaving every set of the form \(\max u < \lambda c\) for \(\lambda < \frac{1}{2}\) (that is the consequence of Proposition~\ref{prop:leaving_c/2}, as an unbounded continuous bifurcation curve cannot be finitely periodic). Although alternative (i) in Proposition~\ref{prop:leaving_c/2} is very unlikely, and never appears in our numerical calculations, we have been unable to rule it out (the reason for this might be that the balance between \(Mu\) and \(u^2\) is exactly at the critical threshold for Gagliardo--Nirenberg, so that control of a higher Sobolev norm of \(u\) in terms of a lower seems to require using precise properties of the integral kernel.) We have illustrated this with long-dashed lines in Figure~\ref{fig:strong}, showing the curves (probably) leaving the cone \(\max u \leq \frac{c}{2}\). 

After that point, our calculations are purely numerical, showing the solution curves asymptotically approaching the second curve of constant solutions \(u = c -1\). Indeed, if the quotient  
 \[
\frac{\max(u)}{c-1}
\]
is bounded and converges pointwise, it is immediate from \eqref{eq:integral equality} that the limit is either \(0\) or \(1\).The numerics indicate that this quotient increases along the bifurcation curve to cover all of the interval \((0,1)\), with wave profiles that are monotone on a minimal half-period even though, by far, we have passed \(u = \frac{c}{2}\). Such a result, we believe, would be new in the setting of capillary-gravity water waves, but it is so far out of reach for us when \(u\) crosses \(\frac{c}{2}\). Interestingly enough, the same pattern seems to persist even when the kernel is not everywhere positive and monotone, that is, for \(T < \frac{4}{\pi^2}\).

Finally, for surface tension \(T \geq \frac{4}{\pi^2}\), Proposition~\ref{prop:no solutions} shows that no solutions pass the line \(c=0\) with \(\max u \geq 0\), indicated by red in Figure~\ref{fig:strong}.

\subsection{The case of weak surface tension}\label{subsec:weak}
When the surface tension is weak, \(T < \frac{1}{3}\), several things are very different. First of all, the first single bifurcation points \(c_k\) might, depending on the period, appear in the interval \(0 < c < 1\), although for large enough values of the wavenumber \(k\) the waves will all be supercritical. Just as as in the case of strong surface tension, Proposition~\ref{prop:no solutions} guarantees that solutions do not cross the lines marked with red in Figure~\ref{fig:weak} (although these now do not include the positive vertical axis \(\max u > 0\)), and there are no solutions in the grey area. Similarly, there are no small, non-constant, solutions in a neighbourhood of any point along the constant solution axes \(u = 0\) and \(u = c-1\), except at the countable bifurcation points.

A peculiarity in the case of weak surface tension is the appearance of multimodal waves connecting different curves of \(k\)-modal waves. Analytically, we find a full disk of solutions by two-dimensional bifurcation in Theorem~\ref{thm:2DLocal}~(i), by varying the wavelength. Fixing the fundamental period, however, this yields a one-dimensional subset of this disk, where we continuously transform via only a curve between two main modes of waves. Numerically, this effect persists even for values slightly off the exact points of two-dimensional bifurcation: as the numerical investigation \cite{Remonato2017nbf} shows, the looping alternative (i) in the global one-dimensional Theorem~\ref{thm:1Dglobal}  happens in the form of one bifurcation curve of \(k\)-modal waves transforming into one of \(n\)-modal waves and thereby connecting back to the line of zero states. The same kind of connections have been found for the Euler equations, analytically for small waves \cite{JonesToland85}, and numerically for small and large waves \cite[Figures 4 and 5]{Aston1991lag} (see also \cite{MR768482,MR724024} for perturbation theory and numerical calculations showing the rippling and non-uniqueness of small waves). These branch-to-branch connections are illustrated in Figure~\ref{fig:weak} by a curve of small bimodal waves connecting two curves of unimodal waves bifurcating off the \(0\)-axis for \(c \in (0,1)\). (In numerical calculations for this manuscript, there have even been instances of curves of waves bridging, consecutively, three different unimodal bifurcation curves, that is, a nontrivial path that connects three separate bifurcation points, but that is not indicated in the graphics.) 

The curves of subcritical waves can be followed, again numerically, past zero wave speed, going left-ward without any indication to stop. In \(L^2(\SM)\), they seem to flatten out to \(0\), but not in \(L^\infty\). This feature reappears again and again in both numerics and our calculations: while \(L^\infty\)-bounds easily yield bounds on higher norms, and one has control of solutions in \(L^2\) with respect to the wave speed, it is extremely difficult to relate the \(L^\infty\)-norm of solutions to their \(L^2(\SM)\)-norm, even when the wave speed is bounded. Generally, all curves of solutions appear to asymptotically approach one of the curve of constant solutions (\(u = 0\) or \(u = c-1\)) in \(L^2(\SM)\), while an actual connection in a space of higher regularity is impossible for almost all wavespeeds because of the invertibility of the linear operator \(D_u F\) (note that it is not obvious how to make sense of the nonlinear mapping \(F\) in \(L^2(\SM)\)). 

Finally, in the case of supercritical bifurcation, we find only single-crested (bell-shaped) waves even though the surface tension is weak. When these waves are small it is a result of Theorem~\ref{thm:1Dlocal}. These curves may be continued globally (Theorem~\ref{thm:1Dglobal}), but the information about them is purely numerical. Just as in the case of strong surface tension, these supercritical waves show no ripples, and they asymptotically approach \(u = c -1\) in \(L^2(\SM)\), but not in \(L^\infty\). Any proof of preservation of the nodal properties in the case of supercritical bifurcation when the surface tension is weak is for the moment entirely out of our reach, even though it would be very interesting to obtain.

\begin{figure}[t]
\begin{center}
\hspace*{-2em}\includegraphics[scale=0.5]{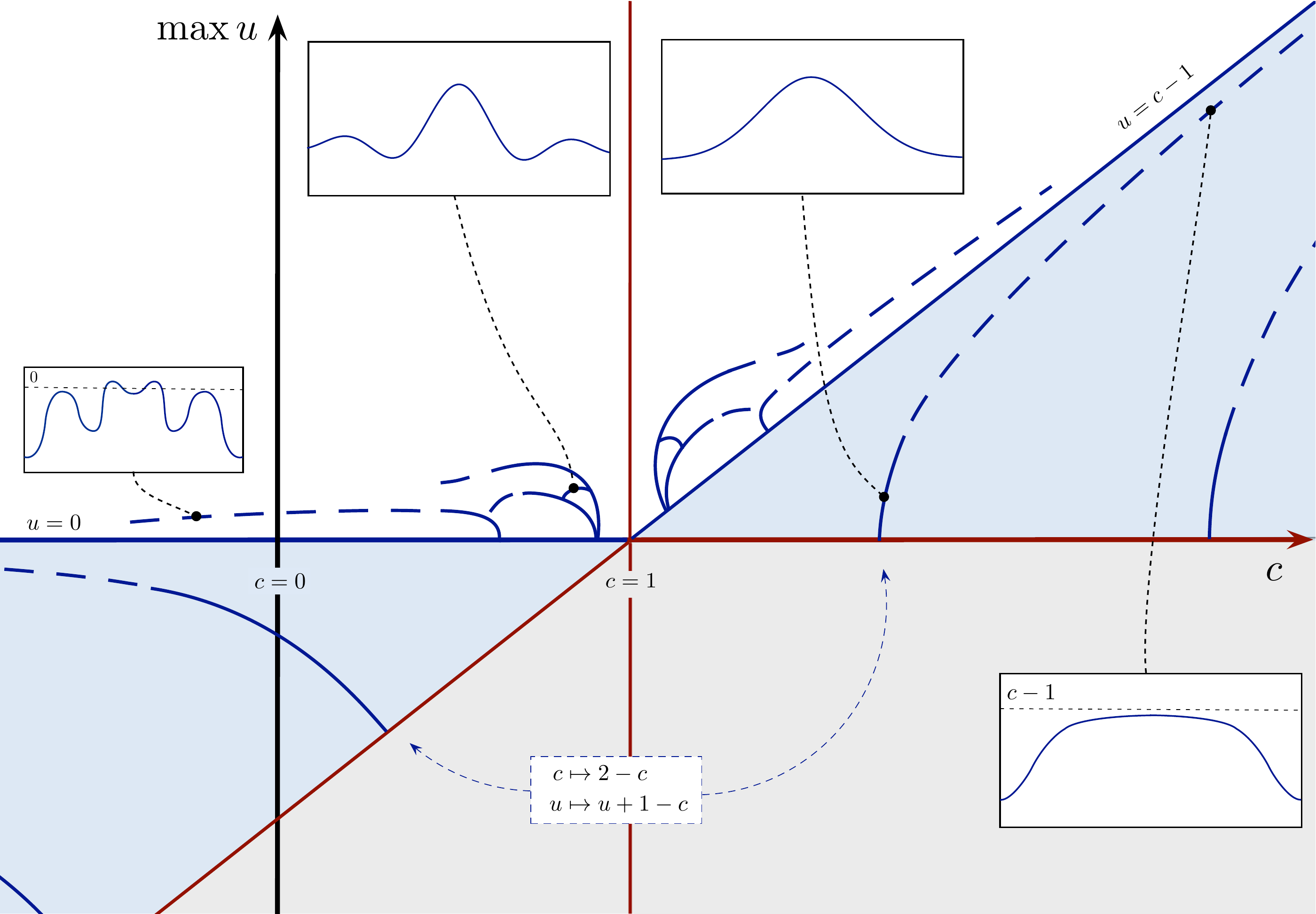}
\caption{A schematic drawing of the global bifurcation diagram in the case of weak surface tension $T<\frac{1}{3}$. The diagram is discussed in detail in Section~\ref{subsec:weak}.}
\label{fig:weak}
\end{center}
\end{figure}

\appendix

\section{Bifurcation formulas}\label{app:A}

This appendix contains higher order expansions of the quantities in Theroem \ref{thm:1Dlocal} and Theorem \ref{thm:2DLocal}.
We start with the first and second order terms in the expansion for the speed $c(t)$ in the one-dimensional bifurcation case, 
which is required by the proof of the global extension in Theorem \ref{eq:global condition}.  We then proceed to study the first 
order terms for the expansions of the functions $r$ and $p$ in the two-dimensional bifurcation case.  

\subsection{One-dimensional bifurcation case}
We begin by determining the derivatives $\dot{c}(0)$ and \(\ddot{c}(0)\) associated to the bifurcation curve constructed in Theorem~\ref{thm:1Dlocal}. This can be done either directly using the Lyapunov--Schmidt reduction carried out in the proof of~Theorem~\ref{thm:1Dlocal} or by the means of bifurcation formulas given for example in \cite{MR2004250}. The latter requires an identification between the bifurcation function $\phi(u,c) = \Pi F(u + \psi(u,c), c)$ used in \cite{MR2004250} and the functions \(v\) and \(r\) used in the proof of  Theorem~\ref{thm:1Dlocal}. This relation is given by \(v(t) = \psi(t \cos(k x), c(t))\). 

Here, start from the Lyapunov--Schmidt representation
\begin{equation}\label{e:LS}
\begin{aligned}
	0 &= F(t\cos(kx) + v(t) , c_0 + r(t))\\
		&= t\cos(kx) + v(t)\\ 
		&\quad+ L \left[ (t\cos(kx) + v(t))^2 -(c_0 + r(t)) (t\cos(kx) + v(t)) \right],
\end{aligned}
\end{equation}
where here it is understood that for each $t$ small the function $v(t)$ is a $2\pi/k$-periodic function of $x$.
Differentiating \eqref{e:LS} once with respect to $t$, evaluating at $t=0$ and using that $v(0) = \dot v(0) = r(0) = 0$ yields the equation
\[
\left(1-c_0L\right)\cos(kx)=0,
\]
which holds by our choice of $c_0$.  Similarly, differentiating \eqref{e:LS} twice with respect to $t$ and evaluating at $t=0$ yields
\begin{equation}\label{e:vdd}
\begin{aligned}
(1-c_0L)\ddot{v}(0)&=2\dot{r}(0)L\cos(kx)-2L\cos^2(kx)\\
&=2\dot{r}(0)l(k)\cos(kx)-\left(1+l(2k)\cos(2kx)\right).
\end{aligned}
\end{equation}
Since $\int_{-\pi}^\pi v(t)\cos(kx)dx=0$ for all $|t|\ll 1$, the above implies that $\dot{r}(0)=0$.  Returning to \eqref{e:vdd}, it now follows that
\begin{equation}\label{e:vdd_explicit}
\ddot{v}(0)=\frac{1}{c_0-1}+\frac{l(2k)\cos(2kx)}{c_0l(2k)-1}.
\end{equation}

Continuing, we observe that taking the third derivative of \eqref{e:LS} with respect to $t$ and evaluating at $t=0$ yields
\[
\left(1-c_0L\right)\dddot{v}(0)=3\ddot{r}(0)L\cos(kx)-6L\left(\ddot{v}(0)\cos(kx)\right).
\]
Using \eqref{e:vdd_explicit}, we compute that
\[
L\left(\ddot{v}(0)\cos(kx)\right)=\frac{l(k)\cos(kx)}{c_0-1}+\frac{l(2k)\left(l(k)\cos(kx)+l(3k)\cos(3kx)\right)}{2(c_0l(2k)-1)}.
\]
Using again that $\int_{-\pi}^\pi v(t)\cos(kx)\,\diff x=0$ for all $|t|\ll 1$, it follows that
\[
\ddot{r}(0)=\frac{3}{c_0-1}+\frac{l(2k)}{c_0l(2k)-1}=\frac{3c_0l(2k)-l(2k)-2}{(c_0-1)(c_0l(2k)-1)},
\]
which is the expression \eqref{eq:global condition} for $\ddot{c}(0)$ given in Theorem~\ref{thm:1Dglobal}.  Note that the above procedure could be continued
to obtain asymptotic expansions of $r(t)$ and $v(t)$ to arbitrarily high order in $t$.  We also note that the above result is consistent with the asymptotic formulas
in \cite{Hur2015mii}.

\subsection{Two-dimensional bifurcation case}\label{S:2d_der_calc}

We now consider the case of a two-dimensional bifurcation as considered in Section \ref{sec:twodim} above.  Recall that the solutions constructed 
in Theorem \ref{thm:2DLocal} can be written as
\begin{align*}
	u(t_1, t_2) &= t_1 \cos(k_1 x) + t_2 \cos(k_2 x) + v(t_1, t_2), \\
	c(t_1, t_2) &= c_0 + r(t_1, t_2), \\
	\kappa(t_1, t_2) &= \kappa_0 + p(t_1, t_2),
\end{align*}
with $v$ of order $\mathcal{O}(|(t_1, t_2)|^2)$ and $r$, $p$ of order $\mathcal{O}(|(t_1,t_2)|)$.  We now characterize the order
of vanishing of the functions $r$ and $p$ at the origin.

\begin{proposition}\label{L:2D_der_calc}
Let the functions $r$ and $p$ be as in Theorem \ref{thm:2DLocal}.  	If $k_2/k_1 \notin \NM_0$, then
	\begin{equation*}
		\nabla r(0,0) = 0, \qquad \nabla p(0,0) = 0
	\end{equation*}
	so that, in particular, $r$ and $p$ are of order $\mathcal{O}(|(t_1,t_2)|^2)$ near the origin.
	If instead $k_2/k_1 \in \NM_0$, then for any $\delta>0$ small we have that, in polar coordinates,
	\begin{equation*}
		r_\varrho\left(0,\vartheta\right) = 0, \qquad p_\varrho\left(0,\vartheta\right) = 0
	\end{equation*}
	if and only if either $k_2 \notin \{0, 2k_1 \}$ or $(k_2,\vartheta)=\left(2k_1,\frac{\pi}{2}\right)$.
\end{proposition}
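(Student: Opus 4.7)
The plan is to obtain the gradients of $r$ and $p$ by implicit differentiation of the defining equations. Both parts of the proposition concern the functions $(r,p)$ produced via the implicit function theorem in the proof of Theorem~\ref{thm:2DLocal}, applied to $(\Psi_1, \Psi_2) = (0,0)$ in the non-resonant case and to $(\widetilde{\Psi}_1, \widetilde{\Psi}_2) = (0,0)$ in the resonant case. Because the Jacobian with respect to $(r,p)$ is non-singular at the origin by \eqref{e:2DDet} and \eqref{e:2DDetPolar}, differentiating these defining relations in the bifurcation directions --- $(t_1, t_2)$ in the non-resonant case and $\varrho$ in the resonant case --- and evaluating at the origin reveals that $\nabla r(0,0),\nabla p(0,0)$ or $(r_\varrho, p_\varrho)(0,\vartheta)$ vanish if and only if the corresponding partial derivatives of $\Psi_i$ or $\widetilde{\Psi}_i$ in those same directions do.

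The next step will be to compute these source derivatives explicitly. Starting from the integral formulas $\Psi_j(t_1,t_2,r,p) = \int_0^1 \partial_{t_j}Q_j(\cdots)\diff z$ in \eqref{e:Psi1}--\eqref{e:Psi2}, their polar analogues \eqref{e:Psi2Polar} and \eqref{e:Psi1Polar}, and the facts $v|_0 = \partial_{t_1}v|_0 = \partial_{t_2}v|_0 = 0$ established in the proof of Theorem~\ref{thm:2DLocal}, everything reduces to the triple-product Fourier integrals
\[
\frac{2}{\pi}\int_{-\pi}^{\pi}\cos(k_i x)\cos(k_j x)\cos(k_m x)\diff x, \qquad i,j,m\in\{1,2\}.
\]
By standard trigonometric identities these are non-zero precisely when $\pm k_i \pm k_j = \pm k_m$ for some choice of signs, which for positive integers $k_1 < k_2$ happens only in the resonant configurations $k_2 = 0$ or $k_2 = 2k_1$.

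In the non-resonant case $k_2/k_1 \notin \NM_0$ none of these resonances hold, so every source integral vanishes; combined with the non-singular Jacobian this yields $\nabla r(0,0) = \nabla p(0,0) = 0$. In the resonant case I would split into the sub-cases $k_2 = 0$, $k_2 = 2k_1$, and $k_2 = mk_1$ with $m \geq 3$, collecting the surviving Fourier integrals into the polar derivatives $\partial_\varrho\widetilde{\Psi}_1|_{(0,\vartheta,0,0)}$ and $\partial_\varrho\widetilde{\Psi}_2|_{(0,\vartheta,0,0)}$; each enters with its own trigonometric weight in $\vartheta$, so solving the resulting linear system picks out exactly those $(k_2,\vartheta)$ at which both first-order corrections vanish.

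The main obstacle I expect is the polar bookkeeping in the case $k_2 = 2k_1$. There the single resonant integral $\frac{2}{\pi}\int\cos^2(k_1 x)\cos(2k_1 x)\diff x = 1$ contributes with weight $\sin\vartheta$ to $\partial_\varrho\widetilde{\Psi}_1$ (through the mixed partial $\partial_{t_2}\partial_{t_1}Q_1|_0$) but with weight $\cos^2\vartheta$ to $\partial_\varrho\widetilde{\Psi}_2$ (through the second polar derivative $\partial_\varrho^2\widetilde{Q}_2|_0$). Correctly tracking these different weightings, together with handling the degenerate case $k_2 = 0$ where $\cos(k_2\cdot)\equiv 1$ produces extra resonances, is the technical heart of the argument.
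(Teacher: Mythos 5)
Your plan is the same as the paper's: differentiate the IFT-defining relations $(\Psi_1,\Psi_2)=(0,0)$ (resp.\ $(\widetilde\Psi_1,\widetilde\Psi_2)=(0,0)$) in the bifurcation direction, invert the Jacobian in $(r,p)$ using \eqref{e:2DDet}/\eqref{e:2DDetPolar}, and reduce to triple-product cosine integrals. That part is sound, and in the non-resonant case carrying out your steps gives exactly the paper's conclusion.

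However, your computation of the $\vartheta$-weights in the resonant case, which you correctly single out as the technical heart, actually exposes a discrepancy. You state (correctly) that the resonant integral $\tfrac{2}{\pi}\int\cos^2(k_1x)\cos(2k_1x)\,\diff x=1$ enters $\widetilde\Psi_{1,\varrho}(0,\vartheta,0,0)$ with weight $\sin\vartheta$. Indeed, since $\widetilde\Psi_1(\varrho,\vartheta,r,p)=\Psi_1(\varrho\cos\vartheta,\varrho\sin\vartheta,r,p)$ and $\Psi_1(t_1,t_2,r,p)=\int_0^1\partial_{t_1}Q_1(zt_1,t_2,r,p)\,\diff z$, the chain rule gives
\[
\widetilde\Psi_{1,\varrho}(0,\vartheta,0,0)=\tfrac{\cos\vartheta}{2}\,Q_{1,t_1t_1}(\mathbf 0)+\sin\vartheta\,Q_{1,t_1t_2}(\mathbf 0)=-\tfrac{2}{\pi}l(\kappa_0 k_1)\sin\vartheta\int_{-\pi}^{\pi}\cos^2(k_1x)\cos(k_2x)\,\diff x,
\]
while $\widetilde\Psi_{2,\varrho}(0,\vartheta,0,0)$ has weight $\cos^2\vartheta$ (through $\widetilde Q_{2,\varrho\varrho}$), as you say. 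But then with $k_2=2k_1$ one needs \emph{both} $\sin\vartheta=0$ and $\cos^2\vartheta=0$ for the source vector to vanish, which is impossible on $\delta<|\vartheta|<\pi-\delta$. Following your plan to its end therefore gives $r_\varrho(0,\vartheta)=p_\varrho(0,\vartheta)=0$ if and only if $k_2\notin\{0,2k_1\}$, \emph{without} the exceptional point $(k_2,\vartheta)=(2k_1,\tfrac\pi2)$. The proposition's stated ``or $(k_2,\vartheta)=(2k_1,\tfrac\pi2)$'' originates in the paper's identity $\widetilde\Psi_{1,\varrho}(0,\vartheta,0,0)=\tfrac12\widetilde Q_{1,\varrho\varrho}(0,\vartheta,0,0)$, which misses a factor of $\cos\vartheta$: since $\widetilde Q_1=\varrho\cos\vartheta\,\widetilde\Psi_1$, the correct relation is $\widetilde Q_{1,\varrho\varrho}(0,\vartheta,\cdot)=2\cos\vartheta\,\widetilde\Psi_{1,\varrho}(0,\vartheta,\cdot)$, from which one cannot infer the vanishing of $\widetilde\Psi_{1,\varrho}$ at $\vartheta=\tfrac\pi2$. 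So your weights are right, but you should flag that pushing them through yields a slightly different statement than the proposition, and not attempt to reconcile them.
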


\begin{proof}
We begin the non-resonant case, $k_2/k_1 \notin \NM_0$.  From the proof of Theorem \ref{thm:2DLocal}, we know for all $0<|(t_1,t_2)|\ll 1$ the functions
$r$ and $p$ satisfy 
\begin{equation*}
		\Psi_i (t_1, t_2, r(t_1, t_2), p(t_1, t_2)) = 0\quad{\rm for}~~ i=1,2,
\end{equation*}
where the $\Psi_i$ are defined in \eqref{e:Psi1} and \eqref{e:Psi2}.
Fixing $j\in\{1,2\}$ we find that differentiating the above with respect to $t_j$ and evaluating at $(t_1,t_2)=(0,0)$ gives the system of equations
\begin{equation}\label{e:rp_system}
\left(\begin{array}{cc}\Psi_{1,r}({\bf 0}) & \Psi_{1,p}({\bf 0})\\
									\Psi_{2,r}({\bf 0}) & \Psi_{2,p}({\bf 0})\end{array}\right)
\left(\begin{array}{c}r_{t_j}(0,0)\\p_{t_j}(0,0)\end{array}\right)
=
-\left(\begin{array}{c}\Psi_{1,t_j}({\bf 0})\\ \Psi_{2,t_j}({\bf 0})\end{array}\right),
\end{equation}
where here ${\bf 0}$ denotes the origin in $\RM^4$.  Since the above system matrix is invertible by \eqref{e:2DDet},
it remains to determine the values of $\Psi_{i,t_j}({\bf 0})$ for $i=1,2$.  This can be accomplished by recalling \eqref{e:Psi1} and
\eqref{e:Psi2} and noting that \eqref{e:Q1} implies that
\[
\frac{\partial^2 Q_i}{\partial t_j^2}({\bf 0}) = -\frac{2}{\pi}\,l(\kappa_0 k_i) \int_{-\pi}^\pi \cos^3(k_i x) \;\mathrm{d}x
\]
and
\[
\frac{\partial^2 Q_i}{\partial t_1 \partial t_2}({\bf 0}) =
\left\{\begin{aligned}
 -\frac{2}{\pi} l(\kappa_0 k_2) \int_{-\pi}^\pi \cos^2(k_1 x) \cos(k_2 x) \;\mathrm{d}x,\quad i=1,\\
  -\frac{2}{\pi} l(\kappa_0 k_1) \int_{-\pi}^\pi \cos^2(k_2 x) \cos(k_1 x) \;\mathrm{d}x,\quad i=2
  \end{aligned}\right.
\]
Consequently, since $k_2/k_1\notin\NM_0$ it follows that $\Psi_{i,t_j}(\vec{0})=0$ for $i=1,2$ and hence
\eqref{e:rp_system} implies that $r_{t_j}(0,0)=p_{t_j}(0,0)=0$ as claimed.  Since $j\in\{1,2\}$ was arbitrary, this proves
the proposition in the non-resonant case.

Now, consider the resonant case when $k_2/k_1\in\NM_0$ and fix $\delta>0$ small.  In this case, for each $\delta<|\vartheta|<\pi-\delta$ and $0<\varrho\ll 1$ the functions
$r(\varrho,\vartheta)$ and $p(\varrho,\vartheta)$ satisfy the system 
\[
\widetilde\Psi_i\left(\varrho,\vartheta,r(\varrho,\vartheta),p(\varrho,\vartheta)\right)=0\quad{\rm for}~~i=1,2,
\]
where here the $\widetilde\Psi_i$ are as in \eqref{e:Psi1Polar} and \eqref{e:Psi2Polar}.	Differentiating this system with respect to $\varrho$ at $\varrho=0$ gives the
system of equations
\begin{equation}\label{e:rp_system_polar}
\left(\begin{array}{cc}\widetilde\Psi_{1,r}(0,\vartheta,0,0) & \widetilde\Psi_{1,p}(0,\vartheta,0,0)\\
									\widetilde\Psi_{2,r}(0,\vartheta,0,0) & \widetilde\Psi_{2,p}(0,\vartheta,0,0)\end{array}\right)
\left(\begin{array}{c}r_\varrho(0,\vartheta)\\p_\varrho(0,\vartheta)\end{array}\right)
=
-\left(\begin{array}{c}\widetilde\Psi_{1,\varrho}(0,\vartheta,0,0)\\ \widetilde\Psi_{2,\varrho}(0,\vartheta,0,0)\end{array}\right).
\end{equation}
As in the non-resonant case, the above system matrix is invertible, this time thanks to \eqref{e:2DDetPolar}, and hence it remains to determine the values
of $\widetilde\Psi_{i,\varrho}(0,\vartheta,0,0)$ for $i=1,2$.   	Let us begin by determining the value in the case $i=1$.  From \eqref{e:Psi1Polar} and the preceding discussion,
we know we can write
\[
\widetilde\Psi_1(\varrho,\vartheta,0,0)=\int_0^1\frac{\partial \widetilde Q_1}{\partial \varrho}(z\varrho,\vartheta,0,0)\,\diff z
\]
where, using \eqref{e:Q1}, we have explicitly 
\begin{align*}
\widetilde Q_1(\varrho,\vartheta,0,0)&=Q_1(\varrho\cos(\vartheta),\varrho\sin(\vartheta),0,0)\\
&=-\frac{2\varrho^2 l(k_0k_1)\cos(\vartheta)\sin(\vartheta)}{\pi}\int_{-\pi}^\pi\cos^2(k_1 x)\cos(k_2x)\,\diff x.
\end{align*}
Clearly then,  $\widetilde{Q}_{2,\varrho\varrho}(0,\vartheta,0,0)$ is equal to zero if and only if either $\vartheta=\frac{\pi}{2}$ or $k_2\notin\{0,2k_1\}$.
Since
\[
\widetilde\Psi_{1,\varrho}(0,\vartheta,0,0)=\frac{1}{2}\frac{\partial^2\widetilde Q_1}{\partial\varrho^2}(0,\vartheta,0,0)
\]
by above, we have shown that  $\widetilde\Psi_{1,\varrho}(0,\vartheta,0,0)=0$ if and only if either of the conditions $\vartheta=\frac{\pi}{2}$ or $k_2\notin\{0,2k_1\}$ hold.

Similarly, we have
\[
\widetilde\Psi_{2,\varrho}(0,\vartheta,0,0)=\frac{1}{2}\frac{\partial^2\widetilde Q_2}{\partial\varrho^2}(0,\vartheta,0,0)
\]
where, using \eqref{e:Q2Polar}, we have
\begin{align*}
\widetilde Q_2(\varrho,\vartheta,0,0)&=-\frac{\varrho^2 l(k_0k_2)}{\pi}\times\\
&\qquad\int_{-\pi}^\pi\cos(k_2 z)\left[\cos^2(\vartheta)\cos^2(k_1 x)+\sin^2(\vartheta)\cos^2(k_2 x)\right]\diff x.
\end{align*}
Clearly, $\widetilde{Q}_{2,\varrho\varrho}(0,\vartheta,0,0)$ vanishes whenever $k_2\notin\{0,2k_1\}$.  When $k_2=0$, 
$\widetilde{Q}_{2,\varrho\varrho}(0,\vartheta,0,0)$ does not vanish for any $\vartheta$, and when $k_2=2k_1$ it only vanishes when $\vartheta=\frac{\pi}{2}$.  Consequently,
$\widetilde\Psi_{2,\varrho}(0,\vartheta,0,0)$ vanishes only when either  $k_2\notin\{0,2k_1\}$ or  $\left(k_2,\vartheta\right)=(2k_1,\frac{\pi}{2})$.
Together with the results concerning $\widetilde\Psi_{1,\varrho}$, this completes the proof.
\end{proof}

\begin{remark}
	The special case $k_2 = 2k_1$ has been found also in the Euler equations (with gravity and vorticity) by the authors of \cite{Aasen2018tgw}.
	The special case $k_2 = 0$ is instead due to the transcritical double bifurcation allowed by the capillary-gravity  Whitham equation.
\end{remark}

\begin{remark}
An explicit example where $r_\varrho(0,\vartheta) \neq 0$ can be seen in \cite[Figure 6]{Remonato2017nbf}, where the branch of nontrivial solutions has a non-vertical tangent at the bifurcation point in the speed-height plane.
\end{remark}

\bibliographystyle{siam}
\bibliography{EJMR_capillary}

\end{document}